\theoremstyle{plain}
\newtheorem{theorem}                {Theorem}      [section]
\newtheorem*{theorem*}                {Theorem \ref{thm:appl}}
\newtheorem{proposition}  [theorem]  {Proposition}
\newtheorem{corollary}    [theorem]  {Corollary}
\newtheorem{lemma}        [theorem]  {Lemma}
\theoremstyle{definition}
\newtheorem{remark}       [theorem]  {Remark}
\newtheorem{definition}   [theorem]  {Definition}
\DeclareMathOperator{\trace}{trace} 
\DeclareMathOperator{\Div}{div} 
 \DeclareMathOperator{\riem}{Riem}
\DeclareMathOperator{\Span}{span}
 \DeclareMathOperator{\im}{Im}
\DeclareMathOperator{\grad}{grad}
\numberwithin{equation}{section}
\begin{document}

\thanks{This work was supported by a grant of the Romanian Ministry of Research and Innovation,
CCCDI-UEFISCDI, project number PN-III-P3-3.1-PM-RO-FR-2019-0234 / 1BM / 2019, within
PNCDI III and the PHC Brancusi 2019 project no 43460 TL}

\address{Univ. Brest, CNRS UMR 6205, LMBA, F-29238 Brest, France}\email{Hiba.Bibi@etudiant.univ-brest.fr}

\address{Department of Mathematics, Michigan State University, East Lansing, MI 48824–
1027, USA}
\email{bychen@math.msu.edu}

\address{Department of Mathematics and Informatics, Gh. Asachi Technical University of
Iasi, Bd. Carol I, no. 11, 700506 Iasi, Romania}
\email{dorel.fetcu@etti.tuiasi.ro}

\address{Faculty of Mathematics\\ Al. I. Cuza University of Iasi\\
Bd. Carol I, no. 11 \\ 700506 Iasi, Romania} \email{oniciucc@uaic.ro}

\keywords {Biconservative Surfaces, Biharmonic Surfaces, PMC Surfaces, Complex Space Forms}

\subjclass[2010]{32V40, 53C40, 31B30, 53C42}

\title[PMC biconservative surfaces]{PMC  biconservative surfaces in complex space forms}

\author{Hiba Bibi, Bang-Yen Chen, Dorel Fetcu, Cezar Oniciuc}

\maketitle

\begin{abstract}
In this article we consider PMC surfaces in complex space forms, and we study the interaction between the notions of PMC, totally real and biconservative.
We first consider PMC surfaces in non-flat complex space forms  and we prove that they are biconservative if and only if totally real. Then, we find a Simons type formula  for a well-chosen vector field constructed from the mean curvature vector field. Next, we prove a rigidity result for CMC biconservative surfaces  in $2$-dimensional complex space forms. We prove then a reduction codimension result for PMC biconservative surfaces in non-flat complex space forms. We conclude by constructing from the Segre embedding examples of CMC non-PMC  biconservative submanifolds, and we also discuss when they are proper-biharmonic.
\end{abstract}

\section{Introduction}
Biharmonic submanifolds of Euclidean spaces $\mathbb{E}^{n}$ were introduced in the middle of the 1980s by B.-Y. Chen \cite{43}  as isometric immersions with harmonic mean curvature vector field (see also \cite{Dimitric}), and in \cite{Ishikawa, J2}, they proved that biharmonic surfaces in
$\mathbb{E}^{3}$ are minimal. This led to conjecture that biharmonic submanifolds of Euclidean spaces are minimal (see \cite{43}). Several partial results support B.-Y. Chen's conjecture \cite{Akutagawa and Maeta, Dimitric,Fu-Hong-Zhan, Hasanis, M-O-R-Euclidean}.

Independently, and almost contemporarily, the biharmonicity was   defined in the mid-80's at a more abstract level of a variational problem for maps between Riemannian manifolds by G.-Y. Jiang \cite{Jiang 2009, Jiang}, which shows  biharmonic maps as critical points of the $L^{2}-$norm of the tension field functional. Since ambient spaces with non-positive curvatures do not admit interesting compact examples, most research has been done on biharmonic submanifolds of Euclidean spheres (see, for example, \cite{Fetcu-Oniciuc-Survey, Habilitation thesis, Chen-Ou}). Many properties of the biharmonic submanifold of the Euclidean spheres follow from the fact that, when having constant mean curvature, they are $2$-type submanifolds of the ambient Euclidean space, in the sense of B.-Y. Chen \cite{B-Y. Chen}. As a next step, the biharmonicity in complex projective spaces was studied in \cite{F-L-M-O, Fetcu-Pinheiro, I-I-U, Sasahara-2019}.

From the theory of biharmonic submanifolds,  the study of biconservative submanifolds is derived, as such submanifolds are characterized  by the vanishing of the tangential part of the bitension field. 
By studying biconservative submanifolds we try to check how much we can rely on just one part of the biharmonic equation, and inspect what results can still be valid with this condition. Biconservative submanifolds were studied in \cite{C-M-O-P, F-O-P, LMO, Manfio-Turgay, M-O-R-Euclidean,  M.O.R., Sasahara-2015, Sasahara-surfaces, Turgay, T.U.}. 

This article starts with proving that PMC surfaces in the non-flat complex space forms are biconservative if and only if totally real. Then we develop a Simons type formula  for a well-chosen vector field constructed from the mean curvature vector field, to deduce that a complete PMC totally real surface of a non-negative  Gaussian curvature in a complex space form  must have parallel shape operator. This surface must be either flat or pseudo-umbilical. Next, restricting ourselves to complex space forms of complex dimension $2$, we find optimal conditions so that a CMC biconservative surface must be PMC. Then, we use the reduction techniques of codimension  used in \cite{Alencar, Eschenburg-Tribuzy, Ferreira-Tribuzy} to improve results in \cite{dorel, Fetcu-Pinheiro}, and show that a non pseudo-umbilical PMC biconservative surface in a non-flat complex space form $N^{n}(c)$ must lie in some $N^{4}(c)\subset N^{n}(c)$. One particular case further reduces the real codimension more to $2$. We conclude using the Segre embedding to construct an example of CMC biconservative submanifolds $M^{1+2q}$ of the complex projective space 
$\mathbb{C}P^{1+2q}(4)$, which are neither PMC  nor totally real. Moreover, we discuss their biharmonicity. This illustrates having higher dimension of a biconservative submanifold and getting a less rigid conclusion, more interesting examples than the PMC or  totally real ones may be found in the future.

\textbf{Conventions.}
Throughout this paper, a surface $M^{2}$ means an oriented manifold of real dimension $2$. For an arbitrary Riemannian manifold, the metric will be indicated by 
$\langle  , \rangle$, or simply omitted, and the following sign conventions will be used
$$
R(X,Y)Z=\nabla_{X}\nabla_{Y}Z-\nabla_{Y}\nabla_{X}Z-\nabla_{[X,Y]}Z,
$$
and 
$$
\Delta=-\trace\nabla^{2}.
$$
A complex space form of complex dimension $n$ and constant holomorphic sectional curvature $c$ will be denoted by $N^{n}(c)$.
Since a minimal submanifold is trivially  biconservative, we will always assume that the mean curvature vector field $H$ does not vanish. When dealing with a submanifold $M$ of $N$, we will indicate the objects on the target manifold $N$ by $\overline{(\cdot)}$.

\textbf{Acknowledgements.}
The authors would like to thank Katsuei Kenmotsu and Eric Loubeau for carefully reading our paper and for their comments and suggestions.

\section{Preliminaries}

A biharmonic map $\phi : M^{m} \to N^{n}$ between two fixed Riemannian manifolds is a critical point of the bienergy functional
$$
E_{2}:C^{\infty}(M,N)\to \mathbb{R}, \quad E_{2}(\phi)=\frac{1}{2}\int_{M} \vert \tau(\phi)\vert ^{2} dv,
$$
where $M$ is compact and $ \tau(\phi)= \trace  \nabla d \phi$ is the tension field of $\phi$. These maps are characterized by the Euler-Lagrange equation, also known as the biharmonic equation, obtained by G.-Y. Jiang in 1986 (see \cite{Jiang}):
\begin{eqnarray}\label{tau-2}
\tau_{2}(\phi)=-\Delta \tau(\phi)-\trace \overline{R}(d\phi(\cdot),\tau(\phi))d \phi (\cdot)=0,
\end{eqnarray}
where $\tau_{2}(\phi)$ is the bitension field of $\phi$.

As any harmonic map is biharmonic, we are interested in studying non-harmonic biharmonic  maps, which are called proper-biharmonic maps.

We fix a map $\phi$ and now we let the domain metric to vary. We obtain a new functional on the set $\mathcal{G}$ of all
Riemannian metrics on $M^{m}$ defined by 
$$
\mathcal{F}_{2}:\mathcal{G}\to \mathbb{R}, \quad \mathcal{F}_{2}(g)=E_{2}(\phi).
$$
Critical points of this functional are characterized by the vanishing of the stress-energy
tensor of the bienergy (see \cite{LMO}). This tensor, denoted by $S_{2}$, was introduced
in \cite{Jiang87} as
\begin{eqnarray*}
S_{2}(X,Y)&=&\frac{1}{2}\vert \tau (\phi)\vert ^{2}\langle X,Y \rangle +\langle d \phi, \nabla \tau (\phi) \rangle \langle X, Y \rangle-\langle d\phi (X),\nabla_{Y} \tau (\phi)\rangle
\\
&\ & -\langle d\phi (Y),\nabla_{X} \tau (\phi)\rangle,
\end{eqnarray*}
and it satisfies
$$
\Div S_{2}=\langle \tau_{2}(\phi), d\phi\rangle.
$$

We note that, for isometric immersions, $(\Div S_{2})^{\sharp} =-\tau_{2}(\phi)^{\top}$, where $\tau_{2}(\phi)^{\top}$ is the
tangent part of the bitension field.

\begin{definition}
A submanifold 
$\phi:M^{m} \to N^{n}$
of a Riemannian manifold $N^{n}$ is called \emph{biconservative} if $\Div S_{2}=0$. 
\end{definition}

In general, for a submanifold 
$\phi:M^{m} \to N^{n}$ we will not mention explicitly the isometric immersion $\phi$ and we will simply say that $M^{m}$ is a submanifold of $N^{n}$. For the sake of simplicity, we recall here the fundamental equations of a submanifold. The Gauss Equation: 
\begin{eqnarray}\label{the Gauss Equation}
\langle \overline{R}(X,Y)Z,W\rangle &=& \langle R(X, Y)Z,W\rangle +\langle B(X,Z),B(Y,W)\rangle \nonumber
\\
&\ &-\langle B(X,W),B(Y,Z)\rangle,
\end{eqnarray}
where $X$, $Y$, $Z$ and $W$ are vector fields tangent to $M^{m}$, and $B$ the second fundamental form of $M^{m}$ in $N^{n}$.
\\
The Codazzi Equation:
\begin{eqnarray}\label{the Codazzi Equation}
(\nabla_{X}^{\perp}B)(Y,Z)-(\nabla_{Y}^{\perp}B)(X,Z)=(\overline{R}(X,Y)Z)^{\perp},
\end{eqnarray}
where 
$$
(\nabla_{X}^{\perp}B)(Y,Z)=\nabla^{\perp}_{X}B(Y,Z)-B(\nabla_{X}Y,Z)-B(Y,\nabla_{X}Z).
$$
Here, $\nabla^{\perp}$ is the connection in the normal bundle $NM^{m}$ of $M^{m}$ in $N^{n}$ and $\nabla$ is the Levi-Civita connection of $M^{m}$.
\\
The Ricci Equation:
\begin{eqnarray}\label{the Ricci Equation}
\langle R^{\perp}(X,Y)U,V\rangle=\langle[A_{U},A_{V}]X,Y\rangle+\langle \overline{R}(X,Y)U,V\rangle,
\end{eqnarray}
where $U$ and $V$ are vector fields normal to $M^{m}$, and $A$ denotes the shape operator.

\begin{definition}
Let $M^{m}$ be a submanifold of a Riemannian manifold  $N^{n}$. If the mean curvature vector field $H$ of $M^{m}$ is parallel in the normal bundle, i.e., $\nabla^{\perp} H=0$, then $M^{m}$ is called a \emph{PMC submanifold}.
\end{definition}

\begin{proposition}\label{Biconservativity Cdts}(\cite{LMO, Nistor1})
Let $M^{m}$ be a submanifold of a Riemannian manifold $N^{n}$. Then the following properties are equivalent:
\begin{enumerate}
\item $M$ is biconservative;
\item  $\trace A_{\nabla_{(\cdot)}^{\perp}H}(\cdot)+\trace(\nabla A_{H})(\cdot, \cdot)+\trace(\overline{R}(\cdot,H)\cdot)^{\top}=0$;
\item $4\trace A_{\nabla_{(\cdot)}^{\perp}H}(\cdot)+m \grad(\vert H \vert ^{2})+4\trace(\overline{R}(\cdot,H)\cdot)^{\top}=0$;
\item $4\trace(\nabla A_{H})(\cdot, \cdot)-m\grad (\vert H \vert ^{2})=0.$
\end{enumerate}
\end{proposition}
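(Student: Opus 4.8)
The plan is to anchor the whole argument on the identity recorded just before the statement, namely that for an isometric immersion $(\Div S_{2})^{\sharp}=-\tau_{2}(\phi)^{\top}$. Thus $M$ is biconservative precisely when the tangent part of the bitension field vanishes, and the proposition reduces to computing $\tau_{2}(\phi)^{\top}$ once and then rewriting it in three algebraically equivalent ways. To get (1)$\Leftrightarrow$(2), I would start from $\tau(\phi)=mH$ and the expression \eqref{tau-2}, and expand $-\Delta\tau(\phi)=\trace(\nabla^{\phi})^{2}(mH)$ in a geodesic frame $\{e_{i}\}$ at a point, applying the Weingarten formula $\overline{\nabla}_{X}H=-A_{H}X+\nabla^{\perp}_{X}H$ and the Gauss formula to each summand. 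Collecting only the tangent components, the normal contributions $B(e_{i},A_{H}e_{i})$ and $\nabla^{\perp}_{e_{i}}\nabla^{\perp}_{e_{i}}H$ drop out, and I expect to be left with
$$
\tau_{2}(\phi)^{\top}=-m\big(\trace A_{\nabla^{\perp}_{(\cdot)}H}(\cdot)+\trace(\nabla A_{H})(\cdot,\cdot)+\trace(\overline{R}(\cdot,H)\cdot)^{\top}\big),
$$
whose vanishing is exactly condition (2).

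For (2)$\Leftrightarrow$(3)$\Leftrightarrow$(4) the engine is a single identity obtained by tracing the Codazzi equation \eqref{the Codazzi Equation}. Contracting $(\nabla^{\perp}_{e_{i}}B)(e_{i},e_{j})=(\nabla^{\perp}_{e_{j}}B)(e_{i},e_{i})+(\overline{R}(e_{i},e_{j})e_{i})^{\perp}$ against $H$ and summing over $i$ — using $\sum_{i}B(e_{i},e_{i})=mH$ to produce the term $\tfrac{m}{2}\grad(|H|^{2})$ and the pair symmetry of $\overline{R}$ to produce the curvature term — I expect to arrive at
$$
\trace(\nabla A_{H})(\cdot,\cdot)-\trace A_{\nabla^{\perp}_{(\cdot)}H}(\cdot)=\tfrac{m}{2}\grad(|H|^{2})+\trace(\overline{R}(\cdot,H)\cdot)^{\top}.
$$
Substituting this relation into the three expressions, the left-hand side of (2) becomes one half of that of (3), while the left-hand sides of (3) and (4) are seen to coincide; hence the three vanishing conditions hold or fail simultaneously, which together with the first step closes the chain of equivalences.

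The closing linear bookkeeping is harmless, so I expect the main obstacle to be the first computation: carrying the second covariant derivative of $mH$ through the Weingarten and Gauss formulas without sign or normalization errors and correctly discarding the normal pieces. The secondary delicate point is the Codazzi step, where one passes between $\nabla^{\perp}B$ and the derivative of the shape operator; here I would be careful that $\nabla A_{H}$ denotes the covariant derivative of $A_{H}$ \emph{as a $(1,1)$-tensor}, so that $\trace(\nabla A_{H})(\cdot,\cdot)-\trace A_{\nabla^{\perp}_{(\cdot)}H}(\cdot)$ is precisely the fully covariant trace of the shape operator, since it is exactly this convention that makes the identity come out with the stated coefficients.
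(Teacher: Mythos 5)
Your proposal is correct: the splitting of $\tau_{2}(\phi)^{\top}=-m\bigl(\trace A_{\nabla^{\perp}_{(\cdot)}H}(\cdot)+\trace(\nabla A_{H})(\cdot,\cdot)+\trace(\overline{R}(\cdot,H)\cdot)^{\top}\bigr)$ checks out with the paper's sign conventions, the traced Codazzi identity $\trace(\nabla A_{H})(\cdot,\cdot)-\trace A_{\nabla^{\perp}_{(\cdot)}H}(\cdot)=\tfrac{m}{2}\grad(|H|^{2})+\trace(\overline{R}(\cdot,H)\cdot)^{\top}$ is exactly right (including the point that $\nabla A_{H}$ is the derivative of the $(1,1)$-tensor, so that subtracting $A_{\nabla^{\perp}H}$ recovers the term contracted out of $\nabla^{\perp}B$), and it does make the expression in (2) equal to half that in (3) and the expressions in (3) and (4) identical. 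The paper itself gives no proof — it quotes the proposition from the cited references — and your argument is essentially the standard derivation found there, so there is nothing to flag.
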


As an immediate consequence we get

\begin{proposition}\label{PMC-biconservative}
Let $M^{m}$ be a PMC submanifold of a Riemannian manifold $N^{n}$. Then $M^{m}$ is biconservative if and only if 
$$
\trace(\overline{R}(\cdot,H)\cdot)^{\top}=0.
$$
\end{proposition}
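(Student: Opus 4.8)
The plan is to read off the result directly from the equivalence $(1)\Leftrightarrow(3)$ in Proposition \ref{Biconservativity Cdts}, the point being that the PMC hypothesis annihilates two of the three terms appearing in condition $(3)$. So I would not re-derive any of the stress-energy machinery; I would simply specialize condition $(3)$ to a submanifold with parallel mean curvature vector field.

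First I would use the PMC condition $\nabla^{\perp}H=0$ itself. Since for every tangent $X$ the normal field $\nabla^{\perp}_{X}H$ vanishes identically and the shape operator depends linearly on its normal argument, the operator $A_{\nabla^{\perp}_{(\cdot)}H}$ is identically zero; hence $\trace A_{\nabla^{\perp}_{(\cdot)}H}(\cdot)=0$. Second, I would observe that PMC forces $\vert H\vert^{2}$ to be constant: for any tangent vector $X$, metric compatibility of $\nabla^{\perp}$ gives $X\vert H\vert^{2}=2\langle\nabla^{\perp}_{X}H,H\rangle=0$, so $\grad(\vert H\vert^{2})=0$. These are the only two facts needed.

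Substituting both into condition $(3)$, namely
$$
4\trace A_{\nabla^{\perp}_{(\cdot)}H}(\cdot)+m\grad(\vert H\vert^{2})+4\trace(\overline{R}(\cdot,H)\cdot)^{\top}=0,
$$
the first two summands drop out and the equivalence collapses to $\trace(\overline{R}(\cdot,H)\cdot)^{\top}=0$, which is exactly the claimed characterization; reversing the substitution shows the converse as well. There is no genuine obstacle here, and this is precisely why the paper labels it an immediate consequence. The single point deserving a word of justification is the constancy of $\vert H\vert^{2}$ under the PMC assumption, but once that is noted the argument is a one-line specialization. As an alternative route one could instead start from $(1)\Leftrightarrow(2)$ and additionally invoke $(4)$ to see that $\trace(\nabla A_{H})(\cdot,\cdot)=0$ when $\grad(\vert H\vert^{2})=0$; I would prefer the route through $(3)$ since there the PMC hypothesis acts most transparently.
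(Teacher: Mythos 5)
Your proof is correct and is essentially the paper's own (implicit) argument: the paper states this as an immediate consequence of Proposition \ref{Biconservativity Cdts}, and specializing condition (3) there under $\nabla^{\perp}H=0$, noting that PMC also kills $\grad(\vert H\vert^{2})$, is exactly the intended reasoning. Nothing is missing.
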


When the ambient space is a space form, i.e. it has a constant Gaussian curvature, we have 

\begin{corollary}(\cite{F-O-P})
Let $M^{m}$ be a PMC submanifold of a real space form $N^{n}$. Then $M^{m}$ is biconservative.
\end{corollary}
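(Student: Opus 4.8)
The plan is to reduce the statement to the criterion of Proposition \ref{PMC-biconservative} and then simply evaluate the curvature term for a space of constant sectional curvature. Since $M^{m}$ is PMC, Proposition \ref{PMC-biconservative} tells us that $M^{m}$ is biconservative precisely when $\trace(\overline{R}(\cdot,H)\cdot)^{\top}=0$, so the entire problem reduces to checking that this tangential trace vanishes.

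First I would write down the curvature tensor of the real space form $N^{n}$. Denoting its constant sectional curvature by $c$, one has
$$
\overline{R}(X,Y)Z=c\big(\langle Y,Z\rangle X-\langle X,Z\rangle Y\big)
$$
for all vector fields $X,Y,Z$ tangent to $N^{n}$. Next I would fix a point $p\in M$, choose an orthonormal frame $\{e_{1},\dots,e_{m}\}$ of $T_{p}M$, and compute the trace against this frame:
$$
\trace\big(\overline{R}(\cdot,H)\cdot\big)=\sum_{i=1}^{m}\overline{R}(e_{i},H)e_{i}
=\sum_{i=1}^{m}c\big(\langle H,e_{i}\rangle e_{i}-\langle e_{i},e_{i}\rangle H\big).
$$

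The key observation, and the only place where any structure is used, is that $H$ is a normal vector field, so $\langle H,e_{i}\rangle=0$ for every $i$; the first batch of terms therefore drops out entirely, leaving
$$
\trace\big(\overline{R}(\cdot,H)\cdot\big)=-c\sum_{i=1}^{m}H=-mc\,H.
$$
This is a normal vector, hence its tangential component is zero, that is, $\trace(\overline{R}(\cdot,H)\cdot)^{\top}=0$. By Proposition \ref{PMC-biconservative} this is exactly the biconservativity condition, so the proof is complete. There is no genuine obstacle here: the whole content is that the constant-curvature tensor sends $H$ into a multiple of $H$ itself, which is automatically normal, so the criterion holds for free. This is precisely why real space forms are more rigid than the non-flat complex space forms studied in the rest of the paper, where $\overline{R}(\cdot,H)\cdot$ acquires extra terms built from the complex structure that need not be normal.
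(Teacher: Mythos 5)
Your proof is correct and follows exactly the route the paper intends: the corollary is stated as an immediate consequence of Proposition \ref{PMC-biconservative} (with a citation to \cite{F-O-P} in place of a written proof), and your computation that $\trace\big(\overline{R}(\cdot,H)\cdot\big)=-mc\,H$ is normal, hence has vanishing tangential part, is precisely the evident verification of that criterion. This also mirrors the paper's own argument in the complex case (Theorem \ref{PMC JT}), where the same trace instead produces the term $\frac{3}{4}c\,(JT)^{\top}$ that need not vanish.
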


\begin{definition}
A submanifold $M^{m}$ of the complex manifold $N$  equipped with the complex structure $J$ is said to be \emph{totally real} if $JTM^{m}$ lies in the normal bundle of $M^{m}$.
\end{definition}

We recall that the curvature tensor field 
$\overline{R}$ of a complex space form $N^{n}(c)$ of complex dimension $n$ is given by
\begin{eqnarray}\label{Tensorial Curvature}
\overline{R}(\overline{X},\overline{Y})\overline{Z} &=&\frac{c}{4}\Big\{\langle \overline{Y},\overline{Z}\rangle \overline{X}-\langle \overline{X},\overline{Z}\rangle\overline{Y} +\langle J\overline{Y},\overline{Z}\rangle J \overline{X}-\langle J \overline{X},\overline{Z}\rangle J\overline{Y}\nonumber
\\
&\ & \quad  +2\langle J\overline{Y},\overline{X}\rangle J\overline{Z}\Big\},
\end{eqnarray}
where 
$\overline{X}$, $\overline{Y}$ and 
$\overline{Z}$ are vector fields tangent to  $N$.

We denote $JH=T+N$, $T$ being the tangential part of $JH$ and $N$ the  normal part of $JH$, i.e., $T=(JH)^{\top}$ and $N=(JH)^{\perp}$.

\begin{theorem}\label{PMC JT}
Let $M^{m}$  be a PMC submanifold of a complex space form $N^{n}(c)$. If $c=0$, then $M^{m}$ is biconservative, and if $c\neq 0$, then $M^{m}$ is biconservative if and only if $JT \in  C(NM^{m})$.
\end{theorem}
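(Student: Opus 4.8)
The plan is to reduce everything to Proposition~\ref{PMC-biconservative}, which for a PMC submanifold characterizes biconservativity by the single vanishing condition $\trace(\overline{R}(\cdot,H)\cdot)^{\top}=0$. So the entire task is to evaluate this tangential trace explicitly, using the curvature formula~\eqref{Tensorial Curvature} of the complex space form, and to show that it is a constant multiple of $(JT)^{\top}$.

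First I would fix a local orthonormal frame $\{e_i\}_{i=1}^{m}$ tangent to $M^{m}$ and substitute $\overline{X}=e_i$, $\overline{Y}=H$, $\overline{Z}=e_i$ into~\eqref{Tensorial Curvature}. Because $H$ is normal, the term $\langle H,e_i\rangle e_i$ drops out, and because $J$ is skew-symmetric we have $\langle Je_i,e_i\rangle=0$, so the term $\langle Je_i,e_i\rangle JH$ also drops out. What survives is
$$
\overline{R}(e_i,H)e_i=\frac{c}{4}\big(-H+3\langle JH,e_i\rangle Je_i\big).
$$

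Next I would sum over $i$. Since $N=(JH)^{\perp}$ is orthogonal to each $e_i$, one has $\langle JH,e_i\rangle=\langle T,e_i\rangle$, and the key (entirely formal) step is to pull $J$ out of the sum using its parallelism as an orthogonal complex structure on the ambient tangent spaces: $\sum_i\langle T,e_i\rangle Je_i=J\big(\sum_i\langle T,e_i\rangle e_i\big)=JT$, as $T$ is tangent. This yields
$$
\trace\overline{R}(\cdot,H)\cdot=\frac{c}{4}\big(-mH+3JT\big).
$$
Taking the tangential part and using $H^{\top}=0$ gives $\trace(\overline{R}(\cdot,H)\cdot)^{\top}=\frac{3c}{4}(JT)^{\top}$.

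Finally I would conclude via Proposition~\ref{PMC-biconservative}: biconservativity is equivalent to $\frac{3c}{4}(JT)^{\top}=0$. When $c=0$ this holds automatically, so $M^{m}$ is biconservative; when $c\neq0$ it is equivalent to $(JT)^{\top}=0$, i.e.\ $JT\in C(NM^{m})$. I do not expect a genuine obstacle here, since the computation is routine once Proposition~\ref{PMC-biconservative} is available; the only care required is the bookkeeping in the decomposition $JH=T+N$ and the identity $\sum_i\langle T,e_i\rangle Je_i=JT$, which relies on $J$ being parallel.
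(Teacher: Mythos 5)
Your proposal is correct and takes essentially the same route as the paper: the paper's (very terse) proof is exactly the computation $\trace(\overline{R}(\cdot,H)\cdot)^{\top}=\frac{3c}{4}(JT)^{\top}$ followed by an appeal to Proposition \ref{PMC-biconservative}, and you have simply carried out that trace computation in full detail. One small correction: the identity $\sum_i\langle T,e_i\rangle Je_i=J\bigl(\sum_i\langle T,e_i\rangle e_i\bigr)=JT$ is pure pointwise linearity of $J$ together with $T$ being tangent; it does not rely on $J$ being parallel.
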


\begin{proof}
Clearly, from  Equation ~(\ref{Tensorial Curvature}), one can see that for a PMC submanifold $M^{m}$,  the  biconservativity condition (see Proposition \ref{PMC-biconservative})
$$
\trace (\overline{R}(\cdot, H)\cdot)^{\top}=\frac{3}{4}c(JT)^{\top}=0
$$
holds if and only if  either $c=0$, or $JT\in C(NM^{m})$.
\end{proof}

\begin{corollary}
Let $M^m$ be a PMC totally real submanifold of a complex space form $N^{n}(c)$. Then $M^m$ is biconservative.
\end{corollary}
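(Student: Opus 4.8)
The plan is to obtain this as an immediate consequence of Theorem~\ref{PMC JT}, so that no genuinely new computation is needed; the entire content lies in checking that the totally real hypothesis supplies precisely the condition that appears there. First I would recall that, by definition, $T=(JH)^{\top}$ is a vector field \emph{tangent} to $M^{m}$, while $N=(JH)^{\perp}$ plays no role here. If $c=0$, Theorem~\ref{PMC JT} already asserts that every PMC submanifold of $N^{n}(c)$ is biconservative, so in this case there is nothing further to prove.

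The only remaining case is $c\neq 0$, where Theorem~\ref{PMC JT} reduces biconservativity of a PMC submanifold to the single requirement $JT\in C(NM^{m})$. At this point I would invoke the totally real assumption: since $M^{m}$ is totally real, $J$ carries every tangent vector field into the normal bundle, that is, $JTM^{m}\subset NM^{m}$. Because $T$ is tangent to $M^{m}$, this yields at once $JT\in C(NM^{m})$, which is exactly the criterion furnished by Theorem~\ref{PMC JT}. Hence $M^{m}$ is biconservative, covering both the $c=0$ and $c\neq 0$ cases.

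There is no real obstacle to overcome: the argument is a one-line unwinding of the definition of totally real against the statement of Theorem~\ref{PMC JT}. The only point where I would be careful is to resist the temptation to claim $T=0$. The totally real condition constrains $J$ on tangent vectors, not on normal ones, and indeed for Lagrangian submanifolds of $\mathbb{C}^{n}$ one has $T=JH\neq 0$ in general while $N=0$; what is always true, and all that is needed, is that $JT$ is normal since $T$ is tangent.
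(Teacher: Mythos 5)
Your proof is correct and is exactly the argument the paper intends: the corollary is stated as an immediate consequence of Theorem~\ref{PMC JT}, since for a totally real submanifold the tangent field $T$ satisfies $JT\in J(TM^{m})\subset NM^{m}$, which is precisely the criterion of that theorem when $c\neq 0$, the case $c=0$ being automatic. Your closing caution that one should not claim $T=0$ (only that $JT$ is normal) is also well taken and consistent with the paper's treatment.
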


\begin{corollary}
Every PMC real hypersurface $M^{2n-1}$ of a complex space form
$N^{n}(c)$ is biconservative.
\end{corollary}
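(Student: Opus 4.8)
The plan is to invoke Theorem \ref{PMC JT}, which reduces the claim to a purely algebraic fact about the complex structure along the hypersurface. If $c=0$ the conclusion is immediate from that theorem, so I would assume $c\neq 0$ and aim to verify the only remaining condition there, namely that $JT\in C(NM^{2n-1})$, where $T=(JH)^{\top}$ denotes the tangential part of $JH$.

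The essential input is that $M^{2n-1}$ has real codimension one in $N^{n}(c)$, so the normal bundle $NM^{2n-1}$ is a real line bundle. Fixing a local unit normal field $\xi$, I would write $H=f\xi$ for a smooth function $f$, and then exploit the key point: since $J$ is compatible with the metric it is skew-symmetric, whence $\langle J\xi,\xi\rangle=-\langle \xi,J\xi\rangle$ and therefore $\langle J\xi,\xi\rangle=0$. Because $\xi$ spans the entire normal space, this forces $J\xi$ to be tangent to $M^{2n-1}$. Consequently $JH=fJ\xi$ is tangent as well, so that $T=JH$ and the normal part $N=(JH)^{\perp}$ vanishes identically.

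It then remains only to compute $JT$. Using $T=JH$ together with $J^{2}=-\mathrm{Id}$, I obtain $JT=J(JH)=-H$, which is a normal vector field; hence $JT\in C(NM^{2n-1})$ and $(JT)^{\top}=0$. By Theorem \ref{PMC JT}, equivalently by the vanishing of $\tfrac{3}{4}c\,(JT)^{\top}$ in the biconservativity condition of Proposition \ref{PMC-biconservative}, the hypersurface is biconservative. There is no genuine obstacle here: the whole argument rests on the single observation that in real codimension one the complex structure carries the normal line into the tangent space, which is exactly what makes $JT$ normal irrespective of the value of $c$.
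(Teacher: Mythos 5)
Your proposal is correct and follows essentially the same route as the paper: reduce to $c\neq 0$ via Theorem \ref{PMC JT}, observe that in codimension one skew-symmetry of $J$ forces $JH$ to be tangent, so $T=JH$ and $JT=-H$ is normal. Your write-up merely makes explicit (via the unit normal $\xi$ and $\langle J\xi,\xi\rangle=0$) a step the paper leaves implicit.
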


\begin{proof}
In view of Theorem \ref{PMC JT}, we may assume that $c \neq 0$. Since the mean curvature vector $H$ is normal to $M^{2n-1}$ in 
$N^{n}(c)$, and the codimension is one, we have $JH = T$, hence $JT =-H$ is a normal vector field. Therefore, $JT \in C(NM^{2n-1})$. Consequently, the real hypersurface $M^{2n-1}$ is always biconservative.
\end{proof}

Other sufficient conditions for biconservativity are given by the following results.

\begin{theorem}
\label{PMC-biconservative-submanifold}Let $M^{m}$ be a PMC submanifold of a complex space
form $N^{n}(c)$ with $c \neq 0$. If $JH \in C(NM^{m})$, then $M^{m}$ is biconservative.
\end{theorem}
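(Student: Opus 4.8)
The plan is to reduce immediately to the characterization already obtained in Theorem~\ref{PMC JT}, which asserts that a PMC submanifold of $N^n(c)$ with $c\neq 0$ is biconservative if and only if $JT\in C(NM^m)$. The entire point is the observation that the hypothesis $JH\in C(NM^m)$ forces $T=0$, after which the condition on $JT$ is satisfied trivially.

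First I would unwind the decomposition $JH=T+N$, where $T=(JH)^\top$ and $N=(JH)^\perp$. The assumption that $JH$ is a section of the normal bundle means precisely that $JH=N$, i.e. its tangential component vanishes, so $T=0$. Hence $JT=J(0)=0$, and the zero vector field lies in $C(NM^m)$; thus the condition $JT\in C(NM^m)$ of Theorem~\ref{PMC JT} is met. Invoking that theorem (with $c\neq 0$) then yields that $M^m$ is biconservative.

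Should one prefer a self-contained argument, I would instead start from Proposition~\ref{PMC-biconservative}, which for a PMC submanifold reduces biconservativity to $\trace(\overline{R}(\cdot,H)\cdot)^\top=0$, and use the curvature identity $\trace(\overline{R}(\cdot,H)\cdot)^\top=\frac{3}{4}c\,(JT)^\top$ established in the proof of Theorem~\ref{PMC JT}. Since $T=0$ gives $JT=0$, this tangential trace vanishes identically, which is exactly the biconservativity condition. There is no substantial obstacle here: the whole content is the bookkeeping implication $JH\in C(NM^m)\Rightarrow T=0\Rightarrow JT=0$, which bridges the hypothesis and the criterion of Theorem~\ref{PMC JT}, with $JT\in C(NM^m)$ holding in the degenerate sense that $JT$ is the zero section.
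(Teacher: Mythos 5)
Your proposal is correct and follows essentially the same route as the paper's own proof, which likewise observes that $JH\in C(NM^{m})$ forces $T=JT=0$ and then concludes biconservativity via the criterion of Theorem~\ref{PMC JT}. Your write-up merely makes explicit the bookkeeping (and the alternative direct appeal to Proposition~\ref{PMC-biconservative}) that the paper leaves implicit.
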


\begin{proof}
If $JH \in C(NM^{m})$, then $T = JT = 0$. Hence $M^{m}$ is biconservative.
\end{proof}

\begin{theorem}
Let $M^{m}$ be a PMC submanifold of a complex space
form $N^{n}(c)$ with $c \neq 0$. If $JH \in C(TM^{m})$, then $M^{m}$ is biconservative.
\end{theorem}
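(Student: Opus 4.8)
The plan is to reduce the claim entirely to the characterization already established in Theorem \ref{PMC JT}, which states that for a PMC submanifold of $N^{n}(c)$ with $c\neq 0$, biconservativity is equivalent to $JT\in C(NM^{m})$. Thus the only thing that needs checking is that the hypothesis $JH\in C(TM^{m})$ forces the vector field $JT$ to be normal to $M^{m}$.

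First I would unpack the hypothesis using the decomposition $JH=T+N$, where $T=(JH)^{\top}$ and $N=(JH)^{\perp}$. The condition $JH\in C(TM^{m})$ says precisely that the normal component vanishes, $N=0$, so $JH=T$. Next I would apply the complex structure $J$ to this identity. Since $J^{2}=-\id$ on the tangent bundle of the ambient complex space form, we obtain
$$
JT=J(JH)=J^{2}H=-H.
$$
Finally, because the mean curvature vector field $H$ is by definition a section of the normal bundle $NM^{m}$, it follows immediately that $JT=-H\in C(NM^{m})$. Invoking Theorem \ref{PMC JT} then yields that $M^{m}$ is biconservative.

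I do not expect any genuine obstacle here: the argument is a one-line algebraic consequence of $J^{2}=-\id$ together with the fact that $H$ is normal, and it exactly mirrors the companion statement (Theorem \ref{PMC-biconservative-submanifold}), where the dual hypothesis $JH\in C(NM^{m})$ produces $T=JT=0$. The only point requiring a moment's care is keeping straight which of $T$ and $N$ vanishes under each hypothesis, and confirming that the characterization in Theorem \ref{PMC JT} is phrased in terms of $JT$ rather than $T$, so that the relation $JT=-H$ is the quantity one actually needs.
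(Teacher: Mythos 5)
Your proposal is correct and is essentially identical to the paper's own proof: the paper also observes that $JH\in C(TM^{m})$ gives $T=JH$, hence $JT=J^{2}H=-H$ is normal, and then concludes biconservativity from the characterization in Theorem \ref{PMC JT}. Nothing further is needed.
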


\begin{proof}
If $JH \in C(TM^{m})$, then $T = JH$ and so 
$JT=-H$ is normal. Hence $M^{m}$ is biconservative.
\end{proof}

\section{PMC biconservative surfaces in $N^{n}(c)$}

In this section we study PMC surfaces in the complex space form $N^{n}(c)$ of complex dimension $n$.

\begin{theorem}\label{Theorem}
Let $M^{2}$  be a PMC surface in a complex space form $N^{n}(c)$. If $c=0$, then $M^{2}$ is biconservative, and if $c\neq 0$, then $M^{2}$ is biconservative if and only if $M^{2}$ is totally real.
\end{theorem}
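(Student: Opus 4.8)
The plan is to derive everything from Theorem \ref{PMC JT}, which already reduces biconservativity of a PMC submanifold to the condition $JT\in C(NM^{2})$, i.e. $(JT)^{\top}=0$, when $c\neq 0$; for $c=0$ the first part of that theorem gives biconservativity directly, so only $c\neq 0$ needs work. The key is to exploit the low dimension: on a surface the failure to be totally real is measured by a single scalar, the Kähler angle $\alpha$, defined via $\cos\alpha=\langle Je_{1},e_{2}\rangle$ for a local oriented orthonormal frame $\{e_{1},e_{2}\}$ of $TM^{2}$, so that $M^{2}$ is totally real if and only if $\cos\alpha\equiv 0$.

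First I would record the pointwise identity $(JY)^{\top}=\cos\alpha\,\mathcal{J}Y$ for every tangent $Y$, where $\mathcal{J}$ denotes rotation by $\pi/2$ on $TM^{2}$; this follows from $\langle Je_{1},e_{1}\rangle=\langle Je_{2},e_{2}\rangle=0$ (skew-symmetry of $J$) together with $\langle Je_{1},e_{2}\rangle=\cos\alpha$. Applying it to $Y=T$ gives $(JT)^{\top}=\cos\alpha\,\mathcal{J}T$, and since $|\mathcal{J}T|=|T|$ the biconservativity condition $(JT)^{\top}=0$ becomes, pointwise, $\cos\alpha=0$ or $T=0$. This already settles the easy implication: if $M^{2}$ is totally real then $\cos\alpha\equiv 0$, hence $(JT)^{\top}=0$ and $M^{2}$ is biconservative (equivalently, one may invoke the corollary following Theorem \ref{PMC JT}).

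For the converse I would work on the open set $U=\{\cos\alpha\neq 0\}$, where biconservativity forces $T=0$, i.e. $JH=N$ is normal throughout $U$. Differentiating and using that the ambient space is Kähler ($\onabla J=0$) together with PMC ($\nabla^{\perp}H=0$), I get
$$
\onabla_{X}(JH)=J\,\onabla_{X}H=-J A_{H}X .
$$
Since $JH=N$ on $U$, the Weingarten formula $\onabla_{X}N=-A_{N}X+\nabla^{\perp}_{X}N$ and comparison of tangential parts yield $A_{N}X=(JA_{H}X)^{\top}=\cos\alpha\,\mathcal{J}A_{H}X$, that is, $A_{N}=\cos\alpha\,\mathcal{J}A_{H}$ on $U$.

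The main obstacle—and the reason for restricting to surfaces—is to convert this into a contradiction. The shape operator $A_{N}$ is symmetric while $\mathcal{J}$ is skew, so imposing $A_{N}=A_{N}^{\ast}$ gives $\cos\alpha\,(\mathcal{J}A_{H}+A_{H}\mathcal{J})=0$. For a symmetric endomorphism of a $2$-dimensional space one has the identity $\mathcal{J}A_{H}+A_{H}\mathcal{J}=(\trace A_{H})\mathcal{J}$, and $\trace A_{H}=\langle\trace B,H\rangle=2|H|^{2}$. Hence $\cos\alpha\cdot 2|H|^{2}\,\mathcal{J}=0$ on $U$; since $|H|$ is a nonzero constant (PMC with $H\neq 0$) and $\mathcal{J}\neq 0$, this forces $\cos\alpha=0$ on $U$, contradicting the definition of $U$. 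Therefore $U=\emptyset$, $\cos\alpha\equiv 0$, and $M^{2}$ is totally real. I expect the only delicate points to be the frame bookkeeping—checking that $\alpha$, $\mathcal{J}$, and the identities are used consistently and independently of the frame—and verifying the $2\times 2$ anticommutator identity; the conceptual crux is the symmetry-versus-skewness clash, which is available precisely because $\dimension M=2$.
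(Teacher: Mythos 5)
Your proof is correct, and for the hard direction (biconservative $\Rightarrow$ totally real) it takes a genuinely different route from the paper's. Both arguments start from the same reduction $(JT)^{\top}=0$ of Theorem \ref{PMC JT}, and both differentiate $JH$ using the K\"ahler and PMC conditions (the paper's equation \eqref{Equality} is exactly your Weingarten comparison, before you restrict to the set where $T=0$). But from there the paper works globally: it invokes the Ricci equation to get $[A_{H},A_{JT}]=0$, chooses a frame diagonalizing $A_{H}$ and $A_{JT}$ simultaneously, and after eigenvalue bookkeeping arrives at the pointwise identity
\begin{equation*}
2\vert T\vert^{2}=2\vert H\vert^{2}\langle Je_{1},e_{2}\rangle^{2}+\trace\langle JB(\cdot,T),\cdot\rangle ,
\end{equation*}
from which total reality follows by a case split ($T_{p}\neq 0$ versus $T_{p}=0$). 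You instead localize to the open set $U=\{\cos\alpha\neq 0\}$, where the pointwise dichotomy $\cos\alpha=0$ or $T=0$ forces $T\equiv 0$, hence $JH=N$ there; differentiating this identity gives $A_{N}=\cos\alpha\,\mathcal{J}A_{H}$, and the clash between the symmetry of the shape operator $A_{N}$ and the skewness of $\mathcal{J}$, combined with the two-dimensional anticommutator identity $\mathcal{J}A_{H}+A_{H}\mathcal{J}=(\trace A_{H})\mathcal{J}$ and $\trace A_{H}=2\vert H\vert^{2}\neq 0$ (the paper's standing assumption $H\neq 0$ enters exactly here), yields the contradiction. All the steps check out: the identity $(JY)^{\top}=\cos\alpha\,\mathcal{J}Y$, its frame-independence on an oriented surface, the matrix verification of the anticommutator identity, and the legitimacy of differentiating $JH=N$ on the open set $U$. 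What each approach buys: yours is shorter and conceptually sharper, avoiding the Ricci equation and all eigenvalue computations, and isolates the genuinely two-dimensional mechanism (symmetric versus skew); the paper's longer computation produces byproducts of independent interest that your argument does not give, namely the commutation relation $[A_{H},A_{JT}]=0$ and the displayed identity above, which holds for every PMC biconservative surface and which the authors single out as having geometric meaning.
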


\begin{proof}
The case $c=0$ can be  easily proved. Further, we will consider the case $c\neq 0$. 

First, we will prove that a PMC biconservative surface $M^{2}$ in $N^{n}(c)$ is totally real. Since $M^{2}$ is PMC and biconservative, from Theorem \ref{PMC JT} we have 
\begin{eqnarray}
(JT)^{\top}=0.
\end{eqnarray}

Now, from the Ricci equation \eqref{the Ricci Equation}, 
since $M^{2}$ is PMC and taking $U=H$, we obtain
$$
\langle[A_{H},A_{V}]X,Y\rangle=-\langle \overline{R}(X,Y)H,V\rangle.
$$
Now, using  (\ref{Tensorial Curvature})
\begin{eqnarray*}
\langle\overline{R}(X,Y)H,JT\rangle &=&\frac{c}{4}\Big \{\langle Y,H\rangle\langle X,JT\rangle-\langle X,H\rangle\langle Y,JT\rangle +\langle JY,H\rangle \langle JX,JT \rangle
\\
&\ & \quad -\langle JX,H\rangle \langle JY,JT\rangle
+2\langle JY, X\rangle \langle JH,JT\rangle \Big\}
\\
&=&
\frac{c}{4}\Big \{\langle JY,H\rangle \langle JX,JT \rangle
-\langle JX,H\rangle \langle JY,JT\rangle
\\
&\ & \quad +2\langle JY, X\rangle \langle JH,JT\rangle \Big\}
\\
&=&
\frac{c}{4}\Big \{-\langle JH,Y\rangle \langle X,T \rangle
+\langle JH,X\rangle \langle Y,T\rangle
\\
&\ & \quad -2\langle JX, Y\rangle \langle H,T\rangle \Big\}
\\
&=&\frac{c}{4}\Big \{-\langle T,Y\rangle \langle X,T \rangle
+\langle T,X\rangle \langle Y,T\rangle
\Big\}
\\
&=&0,
\end{eqnarray*}
then 
\begin{eqnarray}\label{Lie Bracket}
[A_{H}, A_{JT}]=0.
\end{eqnarray}
Now, from Equation (\ref{Lie Bracket}) it follows that at each point of $M^{2}$ there exists a (positive) orthonormal basis $\{e_{1},e_{2}\}$ tangent to $M^{2}$ that diagonalizes both $A_{H}$ and $A_{JT}$ at that point.

Moreover, the following equality holds on $M^{2}$
\begin{eqnarray*}
\trace A_{JT} &=&\sum_{i=1}^{2}\langle A_{JT}e_{i},e_{i}\rangle
=
\sum_{i=1}^{2}\langle B(e_{i},e_{i}), JT\rangle
=\langle \trace B,JT\rangle
\\&=& 2\langle H,JT\rangle
=-2\langle JH,T\rangle
=
-2\langle T,T\rangle
\\
&=&
-2\vert T \vert ^{2}.
\end{eqnarray*}

Further, as $\nabla ^{\perp}H=0$, we have
\begin{eqnarray*}
\overline{\nabla}_{X}JH &=&J\overline{\nabla}_{X}H
=
J(\nabla_{X}^{\perp}H-A_{H}X)
\\
&=&
-JA_{H}X,
\end{eqnarray*}
and 
\begin{eqnarray*}
\overline{\nabla}_{X}JH&=&\overline{\nabla}_{X}(T+N)
\\
&=& \nabla_{X}T+B(X,T)-A_{N}X+\nabla_{X}^{\perp}N,
\end{eqnarray*}
thus 
\begin{eqnarray}\label{Equality}
-JA_{H}X &=& \nabla_{X}T+B(X,T)-A_{N}X+\nabla_{X}^{\perp}N.
\end{eqnarray}
We fix a point $p$ and then, for $X=e_{i}$, we take the inner product of (\ref{Equality}) with $e_{j}$, $i \neq j$, and at $p$ we obtain
\begin{eqnarray*}
-\langle JA_{H}e_{i},e_{j}\rangle &=& \langle \nabla_{e_{i}}T,e_{j}\rangle-\langle A_{N}e_{i}, e_{j}\rangle +\langle B(e_{i},T),e_{j} \rangle +\langle \nabla_{e_{i}}^{\perp}N,e_{j}\rangle
\\
&=&
\langle \nabla_{e_{i}}T,e_{j}\rangle-\langle A_{N}e_{i}, e_{j}\rangle.
\end{eqnarray*}
With respect to the basis $\{e_{1}, e_{2} \}$, we have
$$
A_{H}=
\begin{bmatrix} 
\lambda_{1} & 0  \\
0 & \lambda_{2}
\end{bmatrix},\quad 
A_{JT}=
\begin{bmatrix} 
\mu_{1} & 0 \\
0 & \mu_{2}
\end{bmatrix}.
\quad
$$
Thus, at $p$,
\begin{eqnarray}\label{Eigenvalues equality}
-\lambda_{i}\langle Je_{i},e_{j}\rangle =\langle \nabla_{e_{i}}T, e_{j}\rangle-\langle A_{N}e_{i},e_{j}\rangle.
\end{eqnarray}

One can see that 
$$
\lambda_{1}+\lambda_{2}=\trace {A_{H}}=2 \vert H \vert ^{2}
$$
and
$$
\mu_{1}+\mu_{2}= \trace A_{JT}=-2\vert T \vert ^{2}. 
$$
On the other hand, on $M^{2}$,
\begin{eqnarray*}
\overline{\nabla}_{X}JT&=&-A_{JT}X+\nabla_{X}^{\perp}JT,
\end{eqnarray*}
and
\begin{eqnarray*}
\overline{\nabla}_{X}JT=J\overline{\nabla}_{X}T
=J\nabla_{X}T+JB(X,T).
\end{eqnarray*}
For $X=e_{i}$ in the above relations, at the point $p$, we get
\begin{eqnarray*}
-\langle A_{JT}e_{i}, e_{i}\rangle &=&\langle J \nabla _{e_{i}}T,e_{i}\rangle +\langle JB(e_{i},T),e_{i}\rangle
\\
&=&-\langle \nabla_{e_{i}}T,Je_{i}\rangle - \langle B(e_{i},T),Je_{i} \rangle,
\end{eqnarray*}
and therefore,
$$
-\langle \mu_{i}e_{i},e_{i}\rangle = -\langle \nabla_{e_{i}}T,Je_{i}\rangle -\langle B(e_{i},T),Je_{i}\rangle,
$$
which implies
\begin{eqnarray}
\mu_{i}&=& \sum_{j=1}^{2}\langle \nabla_{e_{i}}T,e_{j}\rangle \langle Je_{i},e_{j}\rangle +\langle B(e_{i},T),Je_{i}\rangle.\nonumber
\\
&=&\langle \nabla_{e_{i}}T,e_{j}\rangle \langle Je_{i},e_{j}\rangle +\langle B(e_{i},T),Je_{i}\rangle, \quad i\neq j.\label{equality of mu}
\end{eqnarray}

Now, we multiply Equation (\ref{Eigenvalues equality}) by $\langle Je_{i},e_{j}\rangle $, $i \neq j$, to obtain
\begin{eqnarray}\label{equality of lambda}
-\lambda_{i}\langle Je_{i}, e_{j}\rangle ^{2} &=&\langle \nabla_{e_{i}}T,e_{j}\rangle\langle Je_{i},e_{j}\rangle - \langle A_{N}e_{i},e_{j}\rangle\langle Je_{i},e_{j}\rangle.
\end{eqnarray}
From  Equations (\ref{equality of mu}) and (\ref{equality of lambda}), we get 
$$
-\lambda_{i}\langle Je_{i}, e_{j}\rangle ^{2}=\mu_{i}-\langle B(e_{i},T),Je_{i}\rangle - \langle A_{N}e_{i},Je_{i}\rangle.
$$
Thus, by summing up, we have
\begin{eqnarray}\label{Sum over i}
 \mu_{1} +\mu_{2}&=& - (\lambda_{1}+ \lambda_{2})\langle Je_{1}, e_{2}\rangle ^{2}+ \sum_{i=1}^{2} \langle B(e_{i},T),Je_{i}\rangle +\sum_{i=1}^{2}\langle A_{N}e_{i},Je_{i}\rangle. 
\end{eqnarray}
Since
\begin{eqnarray}
\langle A_{N}e_{1},Je_{1}\rangle &=&\langle A_{N}e_{1},e_{2}\rangle  \langle e_{2},Je_{1}  \rangle \nonumber
=-\langle e_{1},A_{N}e_{2}\rangle  \langle Je_{2},e_{1}  \rangle\nonumber
\\
&=& -
\langle A_{N}e_{2},Je_{2}\rangle, \nonumber
\end{eqnarray}
then
\begin{eqnarray}
\mu_{1}+\mu_{2}&=& -(\lambda_{1}+\lambda_{2})\langle Je_{1}, e_{2}\rangle ^{2}+ \sum_{i=1}^{2} \langle B(e_{i},T),Je_{i}\rangle \nonumber
\\ 
2 \vert T \vert ^{2}&=& 2 \vert H \vert ^{2}\langle Je_{1}, e_{2}\rangle ^{2} + \trace \langle JB(\cdot,T),\cdot\rangle, \label{Equality of |T|}
\end{eqnarray}
which holds at any point $p \in M^{2}$.
We note that Equation (\ref{Equality of |T|}) has a geometrical meaning.

Now, let $p \in M^{2}$  be an arbitrary point. If $T_{p}\neq 0$, we can consider the orthonormal  basis $\{ X_{1}, X_{2}\}$, where  $X_{1}=T_{p}/\vert T_{p} \vert$, tangent to $M^{2}$. Then, since  $JT$ is normal, we have $\langle JX_{2}, X_{1}\rangle=0$, and it is easy to see that \begin{eqnarray*}
J(T_{p}M^{2})\subset N_{p}M^{2}.
\end{eqnarray*}
Now assume that $T_{p}=0$. From (\ref{Equality of |T|}), it follows that  
$$
2 \vert H \vert ^{2} \langle Je_{1},e_{2}\rangle ^{2}=0,
$$
that is $\langle Je_{1},e_{2}\rangle=0$, which shows that  
\begin{eqnarray*}
J(T_{p}M^{2}) \subset N_{p}M^{2}.
\end{eqnarray*}
Therefore, we conclude that $M^{2}$ is totally real.

Conversely, if $M^{2}$ is totally real, then $(JT)^{\top}=0$ and therefore, $\trace (\overline{R}(\cdot,H), \cdot)^{\top}=0$. Since $M^{2}$ is also PMC,  from Proposition \ref{PMC-biconservative}, it follows that $M^{2}$ is biconservative. 
\end{proof}

\begin{remark}
We note that if $M^{2}$  is a PMC surface in a complex space form 
$N^{n}(c)$ and if $JH \in C(NM^{2})$, i.e. $T=0$,  then $M^{2}$ is totally real.
\end{remark}

\begin{remark}
For $c=0$, every PMC submanifold of a complex $n$-dimensional Euclidean space 
$\mathbb{C}^{n}$ 
is biconservative, but not
necessarily totally real. For instance, $\mathbb{S}^{2}(1) \subset \mathbb{E}^{3} \subset \mathbb{C}^{2}$
is PMC and biconservative
in $\mathbb{C}^{2}$ but not totally real, where $\mathbb{E}^{3}$ is the real $3$-dimensional Euclidean space.
\end{remark}

Concerning slant surfaces (see \cite{Chen-slant}), we have the following non-existence result, which is a direct application of Theorem \ref{Theorem}.

\begin{corollary}
Every PMC proper slant surface in a non-flat complex space form 
$N^{n}(c)$ is not biconservative.
\end{corollary}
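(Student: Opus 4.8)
The plan is to obtain this as an immediate consequence of Theorem \ref{Theorem} via contraposition; no new computation is needed, and the entire content is the observation that a proper slant surface can never be totally real.

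First I would recall the slant (Kähler) angle of a surface $M^{2}$ in $N^{n}(c)$. Choosing an oriented orthonormal frame $\{e_{1},e_{2}\}$ tangent to $M^{2}$, the Hermitian compatibility of $J$ gives $\langle Je_{1},e_{1}\rangle=0$, so the tangential component of $Je_{1}$ is a multiple of $e_{2}$, and a short computation shows that the number $\langle Je_{1},e_{2}\rangle$ is independent of the choice of oriented orthonormal frame. The slant angle $\theta$ is then characterized by $|\langle Je_{1},e_{2}\rangle|=\cos\theta$, the surface is slant exactly when $\theta$ is constant on $M^{2}$, and it is proper slant when $\theta\in(0,\pi/2)$. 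For a proper slant surface one therefore has $\cos\theta\neq 0$, hence $\langle Je_{1},e_{2}\rangle\neq 0$ at every point, which is precisely the assertion that $J(T_{p}M^{2})\not\subset N_{p}M^{2}$ everywhere; in other words $M^{2}$ is not totally real. This is the same quantity $\langle Je_{1},e_{2}\rangle$ whose vanishing detects total reality in the proof of Theorem \ref{Theorem}.

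Next I would invoke Theorem \ref{Theorem}. Since $N^{n}(c)$ is non-flat we are in the case $c\neq 0$, where that theorem states that a PMC surface is biconservative if and only if it is totally real. Its contrapositive says that a PMC surface which is not totally real is not biconservative. Combining this with the previous step shows that a PMC proper slant surface, being non-totally-real, fails to be biconservative, as claimed.

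There is essentially no obstacle beyond this translation of vocabulary, since Theorem \ref{Theorem} already carries all the analytic weight. If one wished to bypass the Kähler-angle language, one could instead argue by contradiction and feed the biconservativity hypothesis into the identity relating $|T|^{2}$, $|H|^{2}$ and $\langle Je_{1},e_{2}\rangle^{2}$ established in the proof of Theorem \ref{Theorem}, which forces $\langle Je_{1},e_{2}\rangle\equiv 0$ and contradicts $\cos\theta\neq 0$; the contrapositive route is simply the shortest packaging of the same fact.
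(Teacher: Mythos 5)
Your proposal is correct and coincides with the paper's own (implicit) argument: the paper states this corollary as ``a direct application of Theorem \ref{Theorem}'', precisely because a proper slant surface has Kähler angle $\theta\in(0,\pi/2)$, hence $\langle Je_{1},e_{2}\rangle\neq 0$ everywhere, so it is nowhere totally real, and the contrapositive of Theorem \ref{Theorem} (case $c\neq 0$) rules out biconservativity. Your extra remarks on the frame-invariance of $\langle Je_{1},e_{2}\rangle$ and the alternative route via the identity for $|T|^{2}$ are sound but add nothing beyond what the paper intends.
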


We note that from the proof of Theorem \ref{Theorem} we get the following general result.

\begin{theorem}
Let $M^{2}$  be a PMC surface in a complex space form $N^{n}(c)$. Then $JT\in C(NM^{2})$ if and only if $M^{2}$ is totally real.
\end{theorem}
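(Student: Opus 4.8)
The plan is to extract the proof directly from the computation already carried out for Theorem \ref{Theorem}, isolating the parts that do not use the biconservativity hypothesis. The key observation is that the condition $JT \in C(NM^2)$ is precisely the biconservativity condition $(JT)^\top = 0$ (by Theorem \ref{PMC JT}, since $c$ plays no role once we have this equation), and so the forward direction is exactly the argument showing that $(JT)^\top = 0$ forces total reality.

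Concretely, for the direction ``$JT \in C(NM^2) \Rightarrow$ totally real'', I would retrace the main body of the proof of Theorem \ref{Theorem}. First I would invoke the Ricci equation with $U = H$ to obtain $[A_H, A_{JT}] = 0$; the computation $\langle \overline{R}(X,Y)H, JT\rangle = 0$ uses only the PMC assumption and $(JT)^\top = 0$ (so that $\langle H, T\rangle = \langle X, T\rangle = 0$ after the substitutions), not biconservativity as such. This yields a common diagonalizing orthonormal basis $\{e_1, e_2\}$ for $A_H$ and $A_{JT}$, and then I would reproduce Equations \eqref{Eigenvalues equality}, \eqref{equality of mu}, \eqref{equality of lambda} to arrive at the key identity \eqref{Equality of |T|}, namely
$$
2\vert T \vert^2 = 2\vert H\vert^2 \langle Je_1, e_2\rangle^2 + \trace \langle JB(\cdot, T), \cdot\rangle.
$$
From here the pointwise dichotomy finishes the argument: if $T_p \neq 0$, choosing $X_1 = T_p/\vert T_p\vert$ and using that $JT$ is normal gives $\langle JX_2, X_1\rangle = 0$ and hence $J(T_pM^2) \subset N_pM^2$; if $T_p = 0$, then \eqref{Equality of |T|} forces $\langle Je_1, e_2\rangle = 0$, again giving $J(T_pM^2) \subset N_pM^2$. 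Thus $M^2$ is totally real.

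For the converse, ``totally real $\Rightarrow JT \in C(NM^2)$'', the argument is immediate and essentially trivial: if $M^2$ is totally real then $JT M^2 \subset NM^2$, so in particular $JH$ has $JT = (JH)^\top$ mapped back under $J$; more directly, total reality gives $J(TM^2) \subset NM^2$, whence $T = (JH)^\top$ satisfies $JT \in J(TM^2) \subset NM^2$, so $JT$ is a normal vector field.

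The only subtlety, and the step I would flag as requiring care, is confirming that the derivation of $[A_H, A_{JT}] = 0$ and of \eqref{Equality of |T|} genuinely needs only $(JT)^\top = 0$ together with PMC, and never secretly uses $c \neq 0$ or the full biconservativity equation. Inspecting the curvature computation, the vanishing of $\langle \overline{R}(X,Y)H, JT\rangle$ rests on the substitutions $\langle JH, T\rangle = \langle T, T\rangle$ and the cancellation of the remaining terms, all of which follow from $(JT)^\top = 0$; the constant $c$ factors out and its value is irrelevant. Hence the whole chain is valid for every $c$, which is exactly why the statement holds in a general complex space form $N^n(c)$ without any restriction on $c$.
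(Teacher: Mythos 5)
Your proposal is correct and is essentially identical to the paper's own argument: the paper proves this statement precisely by remarking that the proof of Theorem \ref{Theorem} uses only the PMC hypothesis together with the condition $(JT)^{\top}=0$ (biconservativity and $c\neq 0$ entering only to produce that condition, via the Ricci equation giving $[A_{H},A_{JT}]=0$, the identity \eqref{Equality of |T|}, and the pointwise dichotomy on $T_{p}$), which is exactly the extraction you carry out, and your converse is the same one-line observation. One minor slip that does not affect the argument: in the vanishing of $\langle \overline{R}(X,Y)H,JT\rangle$, the relevant facts are $\langle H,T\rangle=0$ (since $H$ is normal and $T$ tangent) and the algebraic cancellation of the two remaining terms $-\langle T,Y\rangle\langle X,T\rangle+\langle T,X\rangle\langle Y,T\rangle$, not $\langle X,T\rangle=0$, which is false in general.
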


Now we recall the following result that holds for surfaces:

\begin{theorem} \cite{Nistor1}\label{Nistor}
Let $M^{2}$ be a complete CMC biconservative surface in a Riemannian manifold $N^{n}$. Assume that $K\geq 0$ and $\riem^{N}\leq K_{0}$, where $K_{0}$ is a constant. Then $\nabla A_{H}=0$, and either $M^{2}$ is flat or pseudo-umbilical.
\end{theorem}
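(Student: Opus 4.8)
The plan is to use that $M^2$ is a surface to translate biconservativity into the holomorphicity of a quadratic differential built from $A_H$, and then to run a Liouville/parabolicity argument governed by the hypothesis $K\ge0$.

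First I would record what CMC together with biconservativity gives on a surface. Since $|H|^2$ is constant, $\grad(|H|^2)=0$, so condition (4) of Proposition \ref{Biconservativity Cdts} (with $m=2$) collapses to $\trace(\nabla A_H)(\cdot,\cdot)=0$; that is, the symmetric operator $A_H$ is divergence-free. As $\trace A_H=2|H|^2$ is constant, the trace-free part $P:=A_H-|H|^2\,\id$ is a symmetric, trace-free, divergence-free $(1,1)$-tensor, and one has $|P|^2=\tfrac12(\lambda_1-\lambda_2)^2$, where $\lambda_1,\lambda_2$ are the eigenvalues of $A_H$ (so $\lambda_1+\lambda_2=2|H|^2$).

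Next I would pass to a holomorphic quadratic differential. On an oriented surface, a trace-free divergence-free symmetric $2$-tensor is exactly the real part of a holomorphic quadratic differential: in an isothermal coordinate $z$ its $(2,0)$-part $\Phi=\phi(z)\,dz^2$ satisfies $\partial_{\bar z}\phi=0$, this Cauchy--Riemann equation being equivalent to tracelessness together with $\Div P=0$. Its zeros are the pseudo-umbilical points, where $\lambda_1=\lambda_2$; they are isolated, and away from them $|\Phi|_g=c\,|P|$ for a universal constant $c$, while, since $\log|\phi|$ is harmonic,
\[
\Delta\log|P|=\Delta\log|\Phi|_g=-2K\le 0,
\]
so $\log|P|$ is subharmonic for the convention $\Delta=-\trace\nabla^2$. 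The zeros, where $\log|P|\to-\infty$, only reinforce subharmonicity, so $\log|P|$ is subharmonic on all of $M^2$.

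The heart of the argument is then global. Being complete with $K\ge0$, $M^2$ has at most quadratic area growth and is therefore parabolic, so any subharmonic function that is bounded above must be constant. The point of the ambient bound $\riem^N\le K_0$ is to provide, via the Gauss equation together with $K\ge0$, the control on $|P|$ (equivalently on $\lambda_1-\lambda_2$) needed to apply this principle to $\log|P|$; granting it, $\log|P|$ is constant, hence $\Delta\log|P|=0$ and thus $K\equiv0$ wherever $P\ne0$. I expect this to be the main obstacle, precisely because the Gauss equation sees the full second fundamental form $B$ rather than $A_H$ alone, so bounding $\det A_H=\lambda_1\lambda_2$ — and hence $|P|$ — is the delicate point where completeness, $K\ge0$, and $\riem^N\le K_0$ must genuinely be combined. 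Finally I would conclude by a dichotomy: if $P\equiv0$, then $A_H=|H|^2\,\id$, so $M^2$ is pseudo-umbilical and $\nabla A_H=\nabla(|H|^2\,\id)=0$; otherwise $|P|$ is a nonzero constant, so $P$ is nowhere zero and $K\equiv0$, i.e.\ $M^2$ is flat, and a holomorphic quadratic differential of constant norm on a flat surface is parallel, whence $\nabla P=\nabla A_H=0$. In either case $\nabla A_H=0$ and $M^2$ is flat or pseudo-umbilical.
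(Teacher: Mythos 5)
Your overall strategy --- divergence-free traceless $A_H$, the associated holomorphic quadratic differential, and a parabolicity/Liouville argument --- is sound, and it is essentially the method of the cited source \cite{Nistor1}; note the paper you are reading only quotes this theorem and contains no proof of it. Your reductions are correct: CMC plus item (4) of Proposition \ref{Biconservativity Cdts} gives $\trace(\nabla A_H)(\cdot,\cdot)=0$; the traceless part $P=A_H-|H|^2\id$ of $A_H$ then has holomorphic $(2,0)$-part; $\Delta\log|P|=-2K\le 0$ away from the isolated zeros (with the paper's sign convention for $\Delta$); completeness plus $K\ge 0$ gives parabolicity; and your endgame dichotomy, including $\nabla A_H=0$ in both branches, is fine.

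The genuine gap is the step you explicitly defer (``granting it''): the upper bound on $|P|$, which is the only place the hypothesis $\riem^{N}\le K_0$ can enter, and without which parabolicity yields nothing. You suspected this is delicate because the Gauss equation sees all of $B$ rather than $A_H$ alone; in fact that is exactly what makes it easy. Let $B^0=B-\langle\cdot,\cdot\rangle H$ be the traceless second fundamental form. Evaluating the Gauss equation \eqref{the Gauss Equation} on an orthonormal basis $\{e_1,e_2\}$ and using $\trace B^0=0$ gives
\[
K=\overline{K}(T_pM^2)+|H|^2-\tfrac12\,|B^0|^2,
\]
where $\overline{K}$ denotes the ambient sectional curvature; so $K\ge 0$ and $\overline{K}\le K_0$ force $|B^0|^2\le 2\bigl(K_0+|H|^2\bigr)$, a constant bound since $|H|$ is constant. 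Moreover
\[
\langle PX,Y\rangle=\langle B(X,Y),H\rangle-|H|^2\langle X,Y\rangle=\langle B^0(X,Y),H\rangle,
\]
so Cauchy--Schwarz gives $|P|\le |H|\,|B^0|$, and $|P|$ is bounded. This closes your gap in three lines, and it does so by bounding $|P|$ directly through $|B^0|$, without ever needing to control $\det A_H=\lambda_1\lambda_2$ separately --- which is what made the step look out of reach from the angle you took.
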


Since, by Theorem \ref{Theorem}, a PMC totally real surface in a complex space form $N^{n}(c)$  is biconservative, and a complex space form has the Gaussian curvature bounded by $c/4$ and $c$, we get

\begin{corollary}\label{Corollary on complete PMC}
Let $M^{2}$ be a complete PMC totally real surface  with $K\geq 0$ in a complex space form $N^{n}(c)$. Then $\nabla A_{H}=0$, and either $M^{2}$ is flat or pseudo-umbilical.
\end{corollary}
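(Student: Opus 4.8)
The plan is to obtain this statement as an immediate consequence of Theorem~\ref{Nistor}, so that the whole task reduces to verifying that $M^{2}$ satisfies the hypotheses of that theorem. Since $M^{2}$ is complete and $K\geq 0$ by assumption, only three points remain to be checked: that the surface is CMC, that it is biconservative, and that the ambient sectional curvature is bounded above by a constant.

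First I would observe that a PMC surface is automatically CMC: if $\nabla^{\perp}H=0$, then for every tangent vector field $X$ we have $X(\vert H\vert^{2})=2\langle\nabla^{\perp}_{X}H,H\rangle=0$, so $\vert H\vert$ is constant. Next, because $M^{2}$ is both PMC and totally real, Theorem~\ref{Theorem} applies directly and shows that $M^{2}$ is biconservative (for $c=0$ this is automatic, while for $c\neq 0$ the totally real condition gives $(JT)^{\top}=0$, hence biconservativity).

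The remaining point is the upper bound on $\riem^{N}$. From the curvature tensor formula~\eqref{Tensorial Curvature}, a short computation gives the sectional curvature of the plane spanned by an orthonormal pair $\{X,Y\}$ tangent to $N^{n}(c)$ as
$$
\langle\overline{R}(X,Y)Y,X\rangle=\frac{c}{4}\bigl(1+3\langle JX,Y\rangle^{2}\bigr),
$$
where one uses the skew-symmetry of $J$ to kill the terms $\langle X,Y\rangle$ and $\langle JY,Y\rangle$. Since $\langle JX,Y\rangle^{2}\in[0,1]$, this value always lies between $c/4$ and $c$; in particular $\riem^{N}\leq K_{0}$, where one may take $K_{0}=c$ when $c\geq 0$ and $K_{0}=c/4$ when $c<0$.

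With all the hypotheses of Theorem~\ref{Nistor} now in place, that theorem yields at once $\nabla A_{H}=0$ together with the dichotomy that $M^{2}$ is either flat or pseudo-umbilical. The only genuine computation is the sectional curvature formula displayed above, and even that is routine once~\eqref{Tensorial Curvature} is contracted; so I do not expect any real obstacle here beyond this bookkeeping, the substance of the corollary being entirely carried by Theorem~\ref{Nistor}.
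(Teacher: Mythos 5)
Your proposal is correct and follows exactly the paper's own argument: the paper likewise derives this corollary by invoking Theorem~\ref{Nistor}, using Theorem~\ref{Theorem} for biconservativity (with PMC giving CMC) and the fact that a complex space form has sectional curvature pinched between $c/4$ and $c$. Your only addition is writing out the routine contraction of~\eqref{Tensorial Curvature} giving $\langle\overline{R}(X,Y)Y,X\rangle=\frac{c}{4}\bigl(1+3\langle JX,Y\rangle^{2}\bigr)$, which the paper states without computation.
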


\begin{remark}
The result in Corollary \ref{Corollary on complete PMC} is similar to \cite[Theorem 5.4]{Chen-Ogiue}, but here $M^{2}$ is complete, not necessarily compact, and the proof of Theorem \ref{Nistor} relies on a different technique compared to that used in the proof of Theorem 5.4.
\end{remark}

However, PMC totally real surfaces in complex space forms have more specific properties.

\begin{theorem}
Let $M^{2}$ be a PMC totally real surface in the complex space form $N^{n}(c)$ with Gaussian curvature $K$. Then $\nabla T=A_{N}$ and
$$
-\frac{1}{2} \Delta \vert T \vert ^{2}= K \vert T \vert ^{2}+ \vert A_{N}\vert ^{2}.
$$
\end{theorem}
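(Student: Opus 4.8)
The plan is to read both assertions off Equation~\eqref{Equality}, which for a totally real surface decouples into a tangential and a normal identity, and then to run a Bochner/Weitzenb\"ock argument on $M^2$.

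First I would prove $\nabla T = A_N$. In \eqref{Equality} the vector $A_H X$ is tangent, so $JA_H X$ is normal because $M^2$ is totally real; hence the whole left-hand side $-JA_H X$ is normal. Comparing the \emph{tangential} parts of \eqref{Equality} therefore leaves only $\nabla_X T - A_N X = 0$, which is the first assertion. (The normal parts give the companion relation $\nabla_X^\perp N = -JA_H X - B(X,T)$, which I keep in reserve for an alternative route.)

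Next I would observe that the $1$-form $T^\flat = \langle T,\cdot\rangle$ is harmonic. It is closed since $\nabla T = A_N$ is a symmetric endomorphism, so $\langle \nabla_X T, Y\rangle = \langle A_N X, Y\rangle = \langle A_N Y, X\rangle = \langle \nabla_Y T, X\rangle$, forcing $d(T^\flat)=0$. It is coclosed since $\di T = \trace A_N = \langle \trace B, N\rangle = 2\langle H, N\rangle$, and because $N = (JH)^\perp$ with $H$ normal we have $\langle H,N\rangle = \langle H, JH\rangle = 0$ by the skew-adjointness of $J$; thus $\di T = 0$. Now I apply the Weitzenb\"ock formula $\Delta_H = \nabla^*\nabla + \Ricci$ for $1$-forms, where $\nabla^*\nabla = -\trace\nabla^2$. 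Since $T^\flat$ is harmonic and a surface satisfies $\Ricci = K\,g$, this yields $-\trace\nabla^2 T = \nabla^*\nabla T = -\Ricci(T) = -K T$, that is, $\trace\nabla^2 T = K T$.

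Finally I would combine this with the Bochner identity for the squared norm. With the convention $\Delta = -\trace\nabla^2$ one computes $-\tfrac12\Delta\vert T\vert^2 = \vert\nabla T\vert^2 + \langle \trace\nabla^2 T, T\rangle$; substituting $\vert\nabla T\vert^2 = \vert A_N\vert^2$ (from $\nabla T = A_N$) and $\langle\trace\nabla^2 T, T\rangle = K\vert T\vert^2$ gives exactly $-\tfrac12\Delta\vert T\vert^2 = K\vert T\vert^2 + \vert A_N\vert^2$. The main obstacle is the identity $\trace\nabla^2 T = KT$: the Weitzenb\"ock route packages it cleanly, but it hides real work, and one must be meticulous with the sign convention $\Delta = -\trace\nabla^2$. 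If instead one differentiates $A_N$ directly, the delicate points are tracking the \emph{full} covariant derivative of the shape operator, invoking the Codazzi equation \eqref{the Codazzi Equation}, using the PMC hypothesis $\nabla^\perp H = 0$ to annihilate the $\nabla_T^\perp(2H)$ term, and then verifying from \eqref{Tensorial Curvature} that the totally real condition makes $\sum_i\langle \overline{R}(e_i,T)e_i, N\rangle$ vanish, so that the remaining terms reassemble to $K\vert T\vert^2$ via the Gauss equation \eqref{the Gauss Equation}. This second path is where sign and bookkeeping errors are most likely, which is why I would favor the harmonic $1$-form argument.
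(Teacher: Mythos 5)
Your proposal is correct, and while its first half coincides with the paper's argument, the second half takes a genuinely different route. The derivation of $\nabla T = A_N$ by isolating the tangential part of \eqref{Equality} (the left-hand side $-JA_HX$ being normal by total reality) is exactly what the paper does. For the Laplacian identity, however, the paper never leaves submanifold geometry: starting from \eqref{Laplacian of T}, it works in a geodesic frame, rewrites $\langle \trace\nabla^{2}T,T\rangle$ through $\nabla A_{N}$, and then invokes the Codazzi equation \eqref{the Codazzi Equation} (whose ambient curvature term vanishes by total reality), Equation \eqref{Equality} again, the Gauss equation \eqref{the Gauss Equation}, the explicit curvature tensor \eqref{Tensorial Curvature}, and the cancellation \eqref{Equality curvature} between the two ambient curvature contributions, before $\trace A_{N}=0$ finally yields \eqref{trace nabla two}. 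You instead observe that $\nabla T = A_N$ makes the $1$-form $T^{\flat}$ closed (symmetry of the shape operator) and coclosed ($\trace A_{N}=2\langle H,N\rangle =0$), hence harmonic, and the Weitzenb\"ock formula on a surface gives $\trace \nabla^{2}T=KT$ at once --- a pointwise vector identity, strictly stronger than the scalar relation \eqref{trace nabla two} that the paper derives; your closedness and coclosedness verifications are both sound, and no compactness is needed since harmonicity here is the pointwise statement $dT^{\flat}=0$, $d^{*}T^{\flat}=0$. Your route is shorter and more conceptual: all the ambient data (K\"ahler structure, PMC, total reality, the space-form curvature) is consumed once, in establishing $\nabla T = A_N$, and afterwards only intrinsic surface geometry enters. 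What the paper's longer computation buys in exchange is self-containedness --- no appeal to Hodge-theoretic machinery --- and it makes visible exactly how the holomorphic curvature terms of $N^{n}(c)$ cancel against each other, which has some independent geometric interest. Both arguments are local, so neither requires completeness of $M^{2}$.
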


\begin{proof}
It is well-known that 
\begin{eqnarray}\label{Laplacian of T}
-\frac{1}{2}\Delta \vert T \vert ^{2} &=& \langle  \trace \nabla^{2}T, T\rangle + \vert \nabla T \vert ^{2}.
\end{eqnarray}

Now, since $M^{2}$ is totally real, taking the inner product of (\ref{Equality}) with $Y$, it follows that 
$$
\langle \nabla _{X}T, Y \rangle = \langle A_{N}X,Y \rangle
$$
for any  vector fields $X$, $Y$ tangent to $M^{2}$, and then
\begin{eqnarray*}
\nabla_{X}T=A_{N}X, 
\end{eqnarray*}
that is $\nabla T = A_{N}$, and therefore 

\begin{eqnarray}\label{norm AN}
\vert \nabla T \vert= \vert A_{N}\vert.
\end{eqnarray}

Now, we compute the first term in the right hand side of Equation (\ref{Laplacian of T}), and prove that 
\begin{eqnarray}\label{trace nabla two}
\langle\trace \nabla^{2}T, T\rangle = K\vert T \vert ^{2}.
\end{eqnarray} 

First, we note that, from the decomposition of $JH$, we get $N \perp H$, and obtain
\begin{eqnarray}\label{trace shape operator}
\trace A_{N}= 
2\langle H, N\rangle
=0.
\end{eqnarray}

Let $\{E_{i}\}_{i=1}^{2}$ be a local (positive) orthonormal frame field geodesic at $p\in M^{2}$. Then, at $p$, we have
\begin{eqnarray}\label{nabla T}
\langle  \trace \nabla^{2}T, T\rangle &=&
\langle \nabla_{E_{i}} \nabla_{E_{i}}T, T\rangle \nonumber
\\
&=& \langle \nabla_{E_{i}}A_{N}E_{i}, T\rangle \nonumber
\\
&=&
\langle   (\nabla_{E_{i}}A_{N})E_{i},T\rangle \nonumber
\\
&=&
\langle   (\nabla_{E_{i}}A_{N})T,E_{i}\rangle,
\end{eqnarray}
where we used the fact that $\langle(\nabla_{X} A_{N})\cdot, \cdot\rangle$ is symmetric.
 
The Codazzi equation \eqref{the Codazzi Equation} becomes
\begin{eqnarray} \label{Codazzi}
(\nabla_{X}^{\perp}B)(Y,Z)-(\nabla_{Y}^{\perp}B)(X,Z)=(\overline{R}(X,Y)Z)^{\perp}=0,
\end{eqnarray}
since $M^{2}$ is totally real.

In (\ref{Codazzi}), we consider $X= E_{i}$, $Y=E_{j}$ and $Z= E_{k}$.  At $p$ we have
\begin{eqnarray*}
\langle ( \nabla_{E_{i}}^{\perp} B)(E_{j},E_{k}), N\rangle &=&
\langle \nabla_{E_{i}}^{\perp}B(E_{j},E_{k}),N\rangle
\\
&=&E_{i}\langle B(E_{j},E_{k}),N \rangle-\langle B(E_{j},E_{k}),\nabla_{E_{i}}^{\perp}N\rangle 
\\
&=&
E_{i}\langle A_{N}E_{j},E_{k}\rangle -\langle B(E_{j},E_{k}),\nabla_{E_{i}}^{\perp}N\rangle.
\end{eqnarray*}
Then using Equation (\ref{Equality}) we obtain
\begin{eqnarray}
\label{Codazzi nabla B}
\langle ( \nabla_{E_{i}}^{\perp} B)(E_{j},E_{k}), N\rangle 
&=&\langle \nabla_{E_{i}}(A_{N}E_{j}),E_{k}\rangle \nonumber 
\\
&\ & -\langle B(E_{j},E_{k}),-J(A_{H}E_{i})-\nabla_{E_{i}}T-B(E_{i},T)+A_{N}E_{i}\rangle \nonumber
\\
&=&
\langle (\nabla_{E_{i}} 
A_{N})E_{j},E_{k}\rangle +\langle B(E_{j},E_{k}),J(A_{H}E_{i})\rangle\nonumber
\\
&\ &
+\langle B(E_{j},E_{k}), B(E_{i},T)\rangle,
\end{eqnarray} 
and similarly
\begin{eqnarray}\label{Codazzi2}
\langle (\nabla_{E_{j}}^{\perp} B)(E_{i},E_{k}), N\rangle &=&\langle (\nabla_{E_{j}}A_{N})E_{i},E_{k}\rangle +\langle B(E_{i},E_{k}),J(A_{H}E_{j})\rangle\nonumber
\\
&\ &
+\langle B(E_{i},E_{k}), B(E_{j},T)\rangle. 
\end{eqnarray}
From Equations (\ref{Codazzi}), (\ref{Codazzi nabla B}) and (\ref{Codazzi2}) we have
\begin{eqnarray}\label{Codazzi3}
\langle (\nabla_{E_{i}}A_{N})E_{j},E_{k}\rangle 
- \langle (\nabla_{E_{j}}A_{N})E_{i},E_{k}\rangle
&=& \langle B(E_{j},T), B(E_{i},E_{k})\rangle \nonumber
\\
& \ &
- \langle B(E_{i},T),B(E_{j},E_{k})\rangle \nonumber
\\
&\ &
+ \langle B(E_{i},E_{k}),J(A_{H}E_{j})\rangle \nonumber
\\
&\ & - \langle B(E_{j},E_{k}),J(A_{H}E_{i})\rangle.
\end{eqnarray}
Further, using the fact that $M^{2}$ is totally real, we have
\begin{eqnarray*}
\langle B(E_{i},E_{k}),J(A_{H}E_{j})\rangle &=& -\langle JB(E_{i},E_{k}),A_{H}E_{j}\rangle
\\
&=&
-\langle J(\overline{\nabla}_{E_{i}}E_{k}),A_{H}E_{j}\rangle
\\ &=&
-\langle \overline{\nabla}_{E_{i}}JE_{k},A_{H}E_{j}\rangle
\\&=&
\langle JE_{k},\overline{\nabla}_{E_{i}}(A_{H}E_{j})\rangle
\\&=&
-\langle JE_{k}, \overline{\nabla}_{E_{i}}\overline{\nabla}_{E_{j}}H \rangle
\\
&=&
\langle E_{k}, J(\overline{\nabla}_{E_{i}}\overline{\nabla}_{E_{j}}H)\rangle,
\end{eqnarray*}
 where we also used $\nabla^{\perp}H=0$. Similarly, it follows that
$$
\langle B(E_{j},E_{k}),J(A_{H}E_{i})\rangle=\langle E_{k},J(\overline{\nabla}_{E_{j}}\overline{\nabla}_{E_{i}}H)\rangle,
$$
and, therefore,
\begin{eqnarray}\label{2nd ff}
\\
\langle B(E_{i},E_{k}),J(A_{H}E_{j})\rangle - \langle B(E_{j},E_{k}), J(A_{H}E_{i})\rangle =\langle E_{k},J(\overline{R}(E_{i},E_{j})H)\rangle. \nonumber
\end{eqnarray} 

Next, from the Gauss Equation \eqref{the Gauss Equation}, we have
\begin{eqnarray}\label{Gauss Equation}
\langle B(E_{i},E_{k}),B(E_{j},T)\rangle - \langle B(E_{j},E_{k}),B(E_{i},T)\rangle 
\\
=\langle \overline{R}(E_{i},E_{j})E_{k},T\rangle-\langle R(E_{i},E_{j})E_{k},T\rangle.\nonumber
\end{eqnarray} 
Now, we compute the first term in the right hand side of (\ref{Gauss Equation}) and the curvature term  in (\ref{2nd ff}). We have
\begin{eqnarray*}
\langle\overline{R}(E_{i},E_{j})E_{k},T\rangle &=&\frac{c}{4}\Big \{\langle E_{j},E_{k}\rangle\langle E_{i},T\rangle-\langle E_{i},E_{k}\rangle\langle E_{j},T\rangle +\langle JE_{j},E_{k}\rangle \langle JE_{i},T \rangle
\\
&\ & \quad -\langle JE_{i},E_{k}\rangle \langle JE_{j},T\rangle
+2\langle JE_{j}, E_{i}\rangle \langle JE_{k},T\rangle \Big\}
\\
&=&
\frac{c}{4}\Big \{\langle E_{j},E_{k}\rangle\langle E_{i},T\rangle-\langle E_{i},E_{k}\rangle\langle E_{j},T\rangle \Big \},
\end{eqnarray*}
and
$$
\overline{R}(E_{i},E_{j})H =\frac{c}{4}\Big \{ \langle JE_{j},H\rangle JE_{i}-\langle JE_{i},H\rangle JE_{j}\Big \},
$$
which shows that
\begin{eqnarray*}
\langle J(\overline{R}(E_{i},E_{j})H),E_{k}\rangle &=&
\frac{c}{4} \Big \{ \langle JE_{i},H\rangle\langle E_{j},E_{k}\rangle -\langle JE_{j}, H\rangle\langle E_{i},E_{k}\rangle \Big \}
\\
&=& 
\frac{c}{4} \Big \{ \langle E_{j},JH\rangle\langle E_{i},E_{k}\rangle -\langle E_{i}, JH\rangle\langle E_{j},E_{k}\rangle \Big \}
\\
&=&
\frac{c}{4} \Big \{ \langle E_{j},T\rangle\langle E_{i},E_{k}\rangle -\langle E_{i}, T\rangle\langle E_{j},E_{k}\rangle \Big \}.
\end{eqnarray*}
We note that 
\begin{eqnarray} \label{Equality curvature}
\langle\overline{R}(E_{i},E_{j})E_{k},T\rangle +\langle J(\overline{R}(E_{i},E_{j})H),E_{k}\rangle =0.
\end{eqnarray}
It easily follows from 
Equations (\ref{Codazzi3}), (\ref{2nd ff}), (\ref{Gauss Equation}) and (\ref{Equality curvature}) that 
\begin{eqnarray*}
\langle (\nabla_{E_{i}}A_{N})E_{j},E_{k}\rangle -\langle (\nabla_{E_{j}}A_{N})E_{i},E_{k}\rangle &=&\langle\overline{R}(E_{i},E_{j})E_{k},T\rangle -\langle R(E_{i},E_{j})E_{k},T\rangle 
\\
& \ &+\langle E_{k},J\overline{R}(E_{i},E_{j})H\rangle.
\\
&=& -\langle R(E_{i},E_{j})E_{k},T\rangle.
\end{eqnarray*}

In the above relation, because of its tensorial character, we can consider $E_{j}=T$. Then taking $k=i$, summing up over $i$ and using (\ref{trace shape operator}), we get  
\begin{eqnarray}\label{Relationship between K and T}
\sum_{i=1}^{2}\langle (\nabla_{E_{i}}A_{N})T,E_{i}\rangle &=& \sum_{i=1}^{2} \Big \{ T\langle A_{N}E_{i},E_{i}\rangle -\langle R(E_{i},T)E_{i},T\rangle \Big \}\nonumber
\\
&=&   T (\trace A_{N}) -\sum_{i=1}^{2}\langle R(E_{i},T)E_{i},T\rangle\nonumber
\\&=&
\vert T \vert ^{2}K.
\end{eqnarray}
Thus, from  Equations  (\ref{nabla T}) and (\ref{Relationship between K and T}), we obtain Equation (\ref{trace nabla two}).

Finally, we use (\ref{norm AN}) and (\ref{trace nabla two}) to conclude.
\end{proof}

\begin{theorem}\label{theorem on complete PMC}
If $M^{2}$ is a complete PMC totally real surface with $K\geq 0$ in a complex space form $N^{n}(c)$. Then $\nabla T=A_{N}=0$, and either 
$K=0$ or $K>0$ at some point and $T=0$.
\end{theorem}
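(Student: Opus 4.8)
The plan is to turn the Bochner-type identity of the preceding theorem into a statement about a bounded subharmonic function, and then apply a parabolicity argument. From that theorem we already have $\nabla T = A_N$ together with
$$
-\tfrac{1}{2}\Delta |T|^2 = K|T|^2 + |A_N|^2 .
$$
Since $K\ge 0$ by hypothesis and $|A_N|^2\ge 0$, the right-hand side is non-negative, hence $\Delta |T|^2\le 0$; with the sign convention $\Delta=-\trace\nabla^2$ this means exactly that $|T|^2$ is subharmonic on $M^2$. Moreover $|T|^2$ is bounded: writing the orthogonal decomposition $JH=T+N$ and using that $J$ is an isometry gives $|T|^2=|H|^2-|N|^2\le |H|^2$, and $|H|$ is constant because $M^2$ is PMC. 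Thus $|T|^2$ is a bounded subharmonic function on a complete surface with $K\ge 0$.

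The key step is to conclude that $|T|^2$ is constant. If $M^2$ is compact this is immediate after integrating the identity, since $\int_{M^2}\Delta|T|^2\,dv=0$ forces $\int_{M^2}(K|T|^2+|A_N|^2)\,dv=0$, and both summands being non-negative gives $K|T|^2\equiv 0$ and $A_N\equiv 0$. In the non-compact case I would invoke parabolicity: a complete surface with $K\ge 0$ has at most Euclidean area growth of geodesic balls (by volume comparison, since in real dimension two $K\ge 0$ is the same as $\Ricci\ge 0$), so it satisfies the volume-growth criterion for parabolicity and every bounded-above subharmonic function is constant. Applying this to $|T|^2$ gives $\Delta|T|^2=0$, and substituting back yields $K|T|^2+|A_N|^2=0$ pointwise. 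As both summands are non-negative, each vanishes, so $A_N=0$; combined with $\nabla T=A_N$ this is the asserted $\nabla T=A_N=0$, and we also record $K|T|^2\equiv 0$.

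Finally I would read off the dichotomy from the constancy of $|T|^2$. Write $|T|^2\equiv c_0\ge 0$, with $K|T|^2\equiv 0$. If $K\equiv 0$ we are in the first case. Otherwise $K>0$ at some point $p$, and then $K(p)\,c_0=0$ forces $c_0=0$, so $T\equiv 0$ on all of $M^2$; this is the second case, $K>0$ somewhere and $T=0$. I expect the parabolicity step for the non-compact case to be the main obstacle, since it is the only place where completeness and $K\ge0$ are used together in an essential, non-algebraic way; the cleanest route is to cite a ready-made statement that complete surfaces of non-negative Gaussian curvature are parabolic and that bounded subharmonic functions on parabolic manifolds are constant, rather than reproving the area estimate and the capacity criterion here.
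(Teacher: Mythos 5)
Your proposal is correct and follows essentially the same route as the paper: boundedness of $|T|^{2}$ from $|T|^{2}\le |JH|^{2}=|H|^{2}$ with $|H|$ constant, subharmonicity from the Bochner-type identity with $K\ge 0$, a Liouville-type step forcing $|T|^{2}$ to be constant, and then pointwise vanishing of $K|T|^{2}+|A_{N}|^{2}$ giving $\nabla T=A_{N}=0$ and the dichotomy. The only difference is bookkeeping: where you sketch parabolicity via volume comparison, the paper simply cites Huber's theorem on subharmonic functions on complete surfaces of non-negative curvature, which is precisely the ready-made statement you suggest invoking.
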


\begin{proof}
As $|T|^2\leq \vert JH \vert ^{2}= \vert H \vert ^{2}$ and $M^{2}$ is CMC, we have that $\vert T \vert ^{2}$ is a bounded function on $M^{2}$. Further, since $\Delta|T|^2\leq 0$, i.e. $|T|^2$ is  a subharmonic function, it follows that $|T|^2$ is constant (see \cite{Huber}).
Thus,
$K\vert T \vert ^{2}+\vert A_{N}\vert ^{2}=0$, 
and so
$\vert A_{N}\vert ^{2}=0$ and $K\vert T \vert ^{2}=0$.
Therefore $A_{N}= \nabla T=0$, and either $K=0$ (everywhere) or $K>0$ at some point and $T=0.$

\end{proof}

From Corollary \ref{Corollary on complete PMC} and Theorem \ref{theorem on complete PMC} we have the following result.

\begin{corollary}
Let $M^{2}$ be a complete PMC totally real surface  with $K\geq 0$ in a complex space form $N^{n}(c)$. Then 
$$
\nabla A_{H}=\nabla T=A_{N}=0,
$$
and either $M^{2}$ is flat or it is pseudo-umbilical with $T=0$.
\end{corollary}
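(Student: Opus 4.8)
The plan is to obtain the statement by simply combining the two immediately preceding results, Corollary~\ref{Corollary on complete PMC} and Theorem~\ref{theorem on complete PMC}, both of which apply verbatim under the hypotheses assumed here (complete, PMC, totally real, $K\geq 0$ in $N^{n}(c)$). There is no new computation to perform; the work is entirely in matching up their conclusions.

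First I would record the three vanishing identities. Corollary~\ref{Corollary on complete PMC} already supplies $\nabla A_{H}=0$, while Theorem~\ref{theorem on complete PMC} supplies $\nabla T=A_{N}=0$. Concatenating these gives $\nabla A_{H}=\nabla T=A_{N}=0$ with no further argument.

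The only point requiring a little care is reconciling the two dichotomies produced by the cited results, since Corollary~\ref{Corollary on complete PMC} splits on the geometric type (flat versus pseudo-umbilical) whereas Theorem~\ref{theorem on complete PMC} splits on the Gaussian curvature ($K\equiv 0$ versus $K>0$ somewhere with $T=0$). I would case-split on the alternative furnished by Theorem~\ref{theorem on complete PMC}. If $K\equiv 0$, then $M^{2}$ is flat and we fall into the first case. Otherwise $K>0$ at some point and $T=0$; in particular $M^{2}$ is not flat, so the flat branch of Corollary~\ref{Corollary on complete PMC} is excluded and $M^{2}$ must be pseudo-umbilical, now carrying the extra information $T=0$. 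This yields precisely the stated dichotomy that $M^{2}$ is flat, or else it is pseudo-umbilical with $T=0$.

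Since everything reduces to quoting the two established results and aligning their conclusions, there is essentially no analytic obstacle. The one thing to verify is that the ``$K>0$ somewhere'' branch of the curvature dichotomy genuinely rules out the flat branch of the geometric dichotomy, which it does because flatness is by definition $K\equiv 0$; hence the two alternatives can be merged consistently into the single statement above.
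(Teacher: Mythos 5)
Your proposal is correct and is exactly the paper's argument: the paper derives this corollary by simply combining Corollary~\ref{Corollary on complete PMC} and Theorem~\ref{theorem on complete PMC}, and your case split on the curvature dichotomy (with $K>0$ somewhere excluding flatness, forcing the pseudo-umbilical branch together with $T=0$) is the intended way to merge their conclusions.
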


\section{CMC biconservative surfaces in $N^{2}(c)$}

Consider $M^{2}$ a CMC surface in a complex space form $N^{2}(c)$ of complex dimension $2$, with $c\neq 0$. Let $\{E_{3}=H/\vert H \vert,  E_{4}\}$ be the global orthonormal frame field in the normal bundle $NM^{2}$, and $\{E_{1}, E_{2} \}$ a local positive orthonormal frame field tangent to $M^{2}$. Then the frame field 
$$
\{E_{1}, E_{2},E_{3},E_{4}\}
$$ 
along $M^{2}$ can be extended to a local orthonormal frame field defined on an open subset of $N^{2}(c)$ and tangent to $N^{2}(c)$.

Denote by $\omega_{A}^{B}$ be the connection $1$-forms corresponding to $
\{E_{1}, E_{2},E_{3},E_{4}\}
$, i.e. on $N^{2}(c)$ we have 
$$
\overline{\nabla}_{\cdot}E_{A}=\omega_{A}^{B}(\cdot)E_{B},
$$
and by $\{ \omega^{1}, \omega^{2}, \omega^{3}, \omega^{4}\}$ the dual basis of $
\{E_{1}, E_{2},E_{3},E_{4}\}
$.
It follows that on $M^{2}$ the following relations hold
\begin{eqnarray}\label{Connections}
\nabla^{\perp}_{E_{1}}E_{3}=\omega_{3}^{4}(E_{1})E_{4}, \quad \nabla^{\perp}_{E_{2}}E_{3}= \omega_{3}^{4}(E_{2})E_{4}.
\end{eqnarray}

\begin{proposition}\label{PMC prop1}
Let $M^{2}$ be a pseudo-umbilical CMC biconservative  surface in a complex space form $N^{2}(c)$, with $c\neq 0$. Then $M^{2}$ is PMC and JT is normal.
\end{proposition}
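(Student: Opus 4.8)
The plan is to read off everything from the adapted frame $\{E_1,E_2,E_3,E_4\}$ already fixed in this section, with $E_3=H/|H|$, and to note first that the two conclusions are not independent: once $M^2$ is shown to be PMC, Theorem \ref{Theorem} (applied with $c\neq 0$ and the standing biconservativity) gives at once that $M^2$ is totally real, hence $JT\in C(NM^2)$. So it suffices to prove $\nabla^\perp H=0$. The hypotheses translate as follows. Pseudo-umbilicity means $A_H=|H|^2\id$, i.e. $A_{E_3}=|H|\id$, while $H$ being the mean curvature forces $\trace A_{E_4}=2\langle H,E_4\rangle=0$, so $A_{E_4}$ is trace-free; CMC says $|H|$ is constant, whence by \eqref{Connections} one has $\nabla^\perp H=|H|\,\omega_3^4(\cdot)\,E_4$, and therefore PMC is \emph{equivalent} to the vanishing of the single connection form $\omega_3^4$. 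I would also record the components $\langle JE_A,E_B\rangle$ of the complex structure in this frame, the algebraic identities among them coming from $J$ being skew-adjoint with $J^2=-\id$, and the resulting expressions for $T=(JH)^\top$, $N=(JH)^\perp$ and $(JT)^\top$; in particular $(JT)^\top$ becomes (a multiple of) the Kähler-angle component $\langle JE_1,E_2\rangle$ times $T$.

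First I would contract the Codazzi equation \eqref{the Codazzi Equation} with $H$. The key identity is
$$\langle(\nabla^{\perp}_X B)(Y,Z),H\rangle=\langle(\nabla_X A_H)Y,Z\rangle-\langle B(Y,Z),\nabla^{\perp}_X H\rangle,$$
and pseudo-umbilicity together with CMC make $\nabla A_H=0$, so the shape-operator term disappears and only the $\nabla^\perp H$ term survives. Evaluating $\langle\overline{R}(E_1,E_2)E_k,H\rangle$ by \eqref{Tensorial Curvature} then produces a linear relation tying $\omega_3^4$ to $(JT)^\top$ through $A_{E_4}$ and the factor $c$. This relation is exactly the biconservativity condition of Proposition \ref{Biconservativity Cdts}; here it carries no extra hypothesis, since for $m=2$ a pseudo-umbilical CMC surface automatically satisfies condition (4) of that proposition, so biconservativity is equivalent to CMC. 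Thus the $E_3$-Codazzi yields a genuine identity that couples $\omega_3^4$ and $(JT)^\top$ but, by itself, forces neither to vanish.

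To break this coupling I would bring in the Kähler condition $\overline{\nabla}J=0$: differentiating $\langle JE_A,E_B\rangle$ and substituting $\overline{\nabla}_X E_A=\sum_C\omega_A^C(X)E_C$ gives evolution equations for the Kähler data in terms of the connection forms and of $A_{E_3}=|H|\id$, $A_{E_4}$. Feeding in the pseudo-umbilical value $A_{E_3}=|H|\id$, and combining these with the $E_4$-component of Codazzi and with the Gauss equation \eqref{the Gauss Equation}, I expect to obtain a second relation between $\omega_3^4$ and the tangential part of $JT$ that is independent of the first. The two relations together should force $\omega_3^4=0$, proving $M^2$ is PMC, after which $JT\in C(NM^2)$ follows from Theorem \ref{Theorem} as noted above.

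The main obstacle is twofold: the bookkeeping of this moving-frame computation, and, above all, the degenerate configurations in which the coupling relation becomes vacuous. These are the totally umbilical locus $A_{E_4}=0$ (where the $E_3$-Codazzi says nothing about $\omega_3^4$) and the locus $N=(JH)^\perp=0$ (where $JH$ is tangent). On each of these I expect to argue separately, using the algebraic identities from $J^2=-\id$ and the Kähler evolution equations directly to exclude a nonzero $\omega_3^4$ or a nonzero tangential part of $JT$. I anticipate that handling these special cases cleanly, rather than the generic computation, will require the most care.
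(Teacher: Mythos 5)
Your reductions are correct as far as they go: it does suffice to prove $\nabla^{\perp}H=0$, since Theorem \ref{PMC JT} then gives $JT\in C(NM^{2})$; the contraction identity $\langle(\nabla^{\perp}_{X}B)(Y,Z),H\rangle=\langle(\nabla_{X}A_{H})Y,Z\rangle-\langle B(Y,Z),\nabla^{\perp}_{X}H\rangle$ is valid; and your remark that a pseudo-umbilical CMC surface automatically satisfies condition (4) of Proposition \ref{Biconservativity Cdts}, so that biconservativity is a redundant hypothesis here, is a correct and rather sharp observation. In fact, using $\langle\overline{R}(X,Y)Z,H\rangle=-\langle(\overline{R}(X,Y)H)^{\top},Z\rangle$, your $H$-contracted Codazzi relation is exactly
\begin{equation*}
A_{\nabla^{\perp}_{X}H}Y-A_{\nabla^{\perp}_{Y}H}X=(\overline{R}(X,Y)H)^{\top}=-\tfrac{3c}{4}\langle JX,Y\rangle\, T,
\end{equation*}
which is the same identity the paper produces from its direct computation \eqref{tensorial curvature of N} of $\overline{R}(X,Y)H$ combined with the Ricci equation \eqref{the Ricci Equation} and $[A_{H},A_{V}]=0$ --- except that the paper records it in the source-free form $A_{\nabla^{\perp}_{X}H}Y=A_{\nabla^{\perp}_{Y}H}X$, its equation \eqref{Shape operators}. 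That difference is the whole game. The source-free form is what drives the paper's three-step argument on the set where $\nabla^{\perp}H\neq 0$: (a) equation \eqref{1-form connections} together with $\trace A_{E_{4}}=0$ forces $A_{E_{4}}=0$ by pointwise linear algebra; (b) the biconservativity identity $\trace A_{\nabla^{\perp}_{(\cdot)}H}(\cdot)+\frac{3c}{4}(JT)^{\top}=0$ then collapses to $(JT)^{\top}=0$; (c) the $E_{4}$-components of Codazzi, whose ambient curvature terms vanish once $JT$ is normal, give $\vert H\vert\,\omega_{3}^{4}(E_{1})=\vert H\vert\,\omega_{3}^{4}(E_{2})=0$, the desired contradiction.

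The genuine gap is that your proposal never gets past the first relation: the ``second relation'' is only expected, the mechanism that would force $\omega_{3}^{4}=0$ is only hoped for, and the degenerate loci are explicitly postponed. These are not finishing touches; they are the proof. Two concrete problems. First, differentiating the Kähler data $\langle JE_{A},E_{B}\rangle$ via $\overline{\nabla}J=0$ yields first-order identities involving $\nabla A_{E_{4}}$ and $\omega_{1}^{2}$, not a second pointwise algebraic constraint among $\omega_{3}^{4}$, $A_{E_{4}}$, $T$ and the Kähler angle; you derive no such constraint, and it is far from clear one exists along that route. Second, the locus $A_{E_{4}}=0$ that you set aside as degenerate is, in the paper's scheme, the \emph{main} configuration --- it is precisely what step (a) establishes everywhere on the set $\nabla^{\perp}H\neq0$ --- and it is disposed of by the $E_{4}$-Codazzi computation (c) that you defer. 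Finally, note that the source term $-\frac{3c}{4}\langle JX,Y\rangle T$ you correctly retain is exactly the quantity whose vanishing is being sought, so your coupled relation provably cannot force anything by itself, whereas the paper's source-free version can; to turn your plan into a proof you must either justify the source-free form \eqref{Shape operators} (and then reorganize along (a)--(c), making the second-relation hunt unnecessary), or supply a decoupling argument that your proposal, as written, does not contain.
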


\begin{proof}
From the definition of the curvature tensor field and from the fact that $M^{2}$ is pseudo-umbilical, we get  
\begin{eqnarray}\label{tensorial curvature of N}
\overline{R}(X,Y)H&=& \overline{\nabla}_{X}\overline{\nabla}_{Y}H-\overline{\nabla}_{Y}\overline{\nabla}_{X}H-\overline{\nabla}_{[X,Y]}H
\nonumber
\\
&=&
\overline{\nabla}_{X}(\nabla_{Y}^{\perp}H-A_{H}Y)-\overline{\nabla}_{Y}(\nabla_{X}^{\perp}H-A_{H}X)-\nabla^{\perp}_{[X,Y]}H +A_{H}[X,Y]
\nonumber
\\
&=&
\overline{\nabla}_{X}\nabla_{Y}^{\perp}H-\overline{\nabla}_{X}(\vert H \vert ^{2}Y)-\overline{\nabla}_{Y}\nabla_{X}^{\perp}H+ \overline{\nabla}_{Y}(\vert H \vert ^{2}X)-\nabla^{\perp}_{[X,Y]}H 
\nonumber
\\
&\ &
+\vert H \vert ^{2}[X,Y]\nonumber
\\
&=&
\vert H \vert ^{2}(\overline{\nabla}_{Y}X-\overline{\nabla}_{X}Y+[X,Y])+\overline{\nabla}_{X}\nabla_{Y}^{\perp}H-\overline{\nabla}_{Y}\nabla_{X}^{\perp}H-\nabla^{\perp}_{[X,Y]}H
\nonumber
\\
&=&
\nabla_{X}^{\perp}\nabla_{Y}^{\perp}H-A_{\nabla_{Y}^{\perp}H}X-\nabla^{\perp}_{Y}\nabla_{X}^{\perp}H+A_{\nabla_{X}^{\perp}H}Y-\nabla^{\perp}_{[X,Y]}H
\nonumber
\\
&=&
A_{\nabla_{X}^{\perp}H}Y-
A_{\nabla_{Y}^{\perp}H}X+R^{\perp}(X,Y)H,
\end{eqnarray}
for any $X$, $Y$ tangent to $M^{2}$.

Now, from the Ricci Equation \eqref{the Ricci Equation} and as $M^{2}$ is pseudo-umbilical, we obtain
\begin{eqnarray*}
\langle R^{\perp}(X,Y)H,V\rangle &=& \langle [A_{H},A_{V}]X,Y\rangle +\langle \overline{R}(X,Y)H,V\rangle
\\
&=&
\langle \overline{R}(X,Y)H,V \rangle,
\end{eqnarray*}
and, from (\ref{tensorial curvature of N}), we get 
\begin{eqnarray}\label{Shape operators}
A_{\nabla_{X}^{\perp}H}Y=A_{\nabla^{\perp}_{Y}H}X,
\end{eqnarray}
for any $X$ and $Y$ tangent to $M^{2}$.

From (\ref{Connections}) and (\ref{Shape operators}) for $X=E_{1}$ and $Y=E_{2}$, we obtain
\begin{eqnarray}\label{1-form connections} 
\omega_{3}^{4}(E_{1})A_{4}E_{2}=\omega_{3}^{4}(E_{2})A_{4}E_{1},
\end{eqnarray}
where $A_{i}=A_{E_{i}}$, $i\in\{3,4\}$.

Assume that $\nabla^{\perp}H\neq 0$. Then there exists an open subset of $M^{2}$ where 
$\nabla^{\perp}H \neq 0$ at any point, i.e. $\omega_{3}^{4}\neq 0$ at any point, and we will work on that subset. For the sake of simplicity, we can assume that this subset is the whole manifold $M^{2}$.

Now, let 
$$
A_{4}=
\begin{bmatrix} 
\mu_{1} & \mu_{0}  \\
\mu_{0} & \mu_{2}
\end{bmatrix}, 
$$
with respect to $\{E_{1},E_{2}\}$. Since $\trace A_{4}=2\langle H,E_{4}\rangle=0,$ we obtain 
$\mu_{2}=-\mu_{1}$, and therefore
$$
A_{4}=
\begin{bmatrix} 
\mu_{1} & \mu_{0}  \\
\mu_{0} & -\mu_{1}
\end{bmatrix}.
$$

Now, using Equation(\ref{1-form connections}), we obtain 
$$
\omega_{3}^{4}(E_{1})\{ \mu_{0}E_{1}-\mu_{1}E_{2}\}=\omega_{3}^{4}(E_{2})\{ \mu_{1}E_{1}+\mu_{0}E_{2}\},
$$
that is 
\begin{eqnarray*}
\left\lbrace
\begin{array}{ccc}
\omega_{3}^{4}(E_{1})\mu_{0}-\omega_{3}^{4}(E_{2})\mu_{1}&=&0
\\
-\omega_{3}^{4}(E_{1})\mu_{1}-\omega_{3}^{4}(E_{2})\mu_{0}&=&0.
\end{array}\right.
\end{eqnarray*}
It follows that
$$
\{(\omega_{3}^{4}(E_{2}))^{2}+(\omega_{3}^{4}(E_{1}))^{2}\}\mu_{1}=0
$$
and, since $\nabla^{\perp}H\neq0$, we obtain $\mu_{1}=\mu_{2}=0$, and then $\mu_{0}=0$.
Thus, $A_{4}=0$ on $M^{2}$.

Now, we have
$$
B(E_{1},E_{1})=\vert H \vert E_{3}, \quad  B(E_{2},E_{2})=\vert H \vert E_{3} \quad  \textnormal{and} \quad B(E_{1},E_{2})=0.
$$

Since $M^{2}$ is CMC and biconservative, we have 
$$
\trace A_{\nabla_{\cdot}^{\perp}H}(\cdot)+\trace(\overline{R}(\cdot,H)\cdot)^{\top}=0.
$$
Also, using $A_{4}=0$, we get
$$
\trace A_{\nabla_{\cdot}^{\perp}H}(\cdot)= \vert H \vert \omega_{3}^{4}(E_{1})A_{4}E_{1}+\vert H \vert \omega_{3}^{4}(E_{2})A_{4}E_{2}=0.
$$
Therefore, 
$\trace(\overline{R}(\cdot,H)\cdot)^{\top}=0$, which shows that, as $c\neq 0$, $(JT)^{\top}=0$.

Next, we will use again the Codazzi Equation. From  Equation (\ref{Tensorial Curvature}) we have
\begin{eqnarray*}
(\overline{R}(X,Y)Z)^{\perp}&=&\frac{c}{4} \{\langle JY,Z\rangle (JX)^{\perp}-\langle JX,Z\rangle (JY)^{\perp}+2\langle JY,X\rangle(JZ)^{\perp} \},
\end{eqnarray*}
and if 
\\
(i) $X=Z=E_{1}$, $Y=E_{2}$, we obtain
\begin{eqnarray*}
\langle (\overline{R}(E_{1},E_{2})E_{1})^{\perp},E_{3}\rangle &=& \frac{3c}{4}\langle JE_{2},E_{1}\rangle\langle JE_{1},E_{3}\rangle
=
\frac{3c}{4\vert H \vert}\langle JE_{2},E_{1}\rangle\langle JE_{1},H\rangle
\\
&=&
-\frac{3c}{4\vert H \vert}\langle JE_{2},E_{1}\rangle\langle E_{1},JH\rangle
=
-\frac{3c}{4\vert H \vert}\langle JE_{2},E_{1}\rangle\langle E_{1},T\rangle
\\
&=&
-\frac{3c}{4\vert H \vert}\langle JE_{2},T\rangle
=
-\frac{3c}{4\vert H \vert}\langle E_{2},JT\rangle
\\
&=&
0
\end{eqnarray*}
since $JT$ is normal. In the same way we obtain
\begin{eqnarray}\label{1-1Codazzi}
\langle (\overline{R}(X,Y)Z)^{\perp},V\rangle=0
\end{eqnarray}
in all the following cases:
\\
(ii) $X=E_{1}$, $Y=Z=E_{2}$, $V=E_{3},$
\\
(iii) $X=Z=E_{1}$, $Y=E_{2}$, $V=E_{4},$
\\
(iv) $X=E_{1}$, $Y=Z=E_{2}$, $V=E_{4}$.

Hence, for $X$, $Y$, $Z$ and $V$ as in any of the above cases, from the Codazzi Equation \eqref{the Codazzi Equation} and from Equation (\ref{1-1Codazzi}), we get
$$
\vert H \vert\omega_{3}^{4}(E_{2})=0 \quad \textnormal{and} \quad 
\vert H \vert \omega_{3}^{4}(E_{1})=0,
$$
that is $\omega_{3}^{4}= 0$, which is a contradiction.
\end{proof}

\begin{remark}
Taking into account Theorem \ref{Theorem}, Proposition \ref{PMC prop1} agrees with the result in \cite{N.Sato}.
\end{remark}

\begin{proposition}\label{PMC prop2}
Let $M^{2}$ be a CMC biconservative surface with no pseudo-umbilical points in a complex space form $N^{2}(c)$, with $c\neq 0$. If $JT$ is normal, then $M^{2}$ is PMC.
\end{proposition}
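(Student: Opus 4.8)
The plan is to argue by contradiction, in parallel with the proof of Proposition \ref{PMC prop1}. Suppose $M^{2}$ is not PMC; since $M^{2}$ is CMC this means $\nabla^{\perp}H\neq 0$, hence $\omega_{3}^{4}\neq 0$, on some nonempty open subset, on which we work. First I would exploit biconservativity: because $M^{2}$ is CMC, Proposition \ref{Biconservativity Cdts} reduces the biconservativity condition to $\trace A_{\nabla^{\perp}_{(\cdot)}H}(\cdot)+\trace(\overline{R}(\cdot,H)\cdot)^{\top}=0$, and by (the computation in) Theorem \ref{PMC JT} the curvature term equals $\frac{3}{4}c(JT)^{\top}$, which vanishes by hypothesis. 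Writing $A_{4}$ as a trace-free symmetric operator and using $\nabla^{\perp}_{E_{i}}H=|H|\omega_{3}^{4}(E_{i})E_{4}$, the relation $\trace A_{\nabla^{\perp}_{(\cdot)}H}(\cdot)=0$ becomes a homogeneous linear system in the entries of $A_{4}$ whose determinant is a nonzero multiple of $(\omega_{3}^{4}(E_{1}))^{2}+(\omega_{3}^{4}(E_{2}))^{2}$; since $\omega_{3}^{4}\neq 0$, this forces $A_{4}=0$, exactly as in Proposition \ref{PMC prop1}.

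Next I would extract the geometry hidden in the hypothesis $(JT)^{\top}=0$. If $T=0$ on an open set, then $JH$ is normal and a short computation shows that $J$ preserves $TM^{2}$, so $M^{2}$ is a complex curve and hence minimal, contradicting $H\neq 0$; thus we may assume $T\neq 0$ and, $JT$ being normal, $M^{2}$ is totally real. This lets me fix an adapted orthonormal frame $E_{1}=T/|T|=JE_{3}$, $E_{2}$, $E_{3}=H/|H|$, $E_{4}=JE_{2}$, so that $JE_{1}=-E_{3}$ and $JE_{4}=-E_{2}$. The heart of the argument is then to compute $\overline{\nabla}_{X}JH=J\overline{\nabla}_{X}H$ in two ways. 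Using the Weingarten formula together with $A_{4}=0$, the normal part yields $A_{3}=\mathrm{diag}(2|H|,0)$ with respect to $\{E_{1},E_{2}\}$ (so $M^{2}$ is automatically non-pseudo-umbilical, in agreement with the hypothesis, the pseudo-umbilical case being covered by Proposition \ref{PMC prop1}), while the tangential part gives $\nabla_{X}E_{1}=-\omega_{3}^{4}(X)E_{2}$, that is, the tangent connection form satisfies $\omega_{1}^{2}=-\omega_{3}^{4}$.

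Finally I would feed this rigid structure into the Codazzi equation \eqref{the Codazzi Equation}. Since $M^{2}$ is totally real, $\langle JE_{i},E_{j}\rangle=0$ for tangent indices, so by \eqref{Tensorial Curvature} the normal curvature term $(\overline{R}(X,Y)Z)^{\perp}$ vanishes for all tangent $X,Y,Z$; its $E_{3}$-component then says that $A_{3}$ is a Codazzi tensor. Evaluating $(\nabla_{E_{1}}A_{3})(E_{2},E_{k})=(\nabla_{E_{2}}A_{3})(E_{1},E_{k})$ on the adapted frame, where $A_{3}=\mathrm{diag}(2|H|,0)$ has constant eigenvalues, gives $2|H|\,\omega_{1}^{2}(E_{1})=0$ for $k=1$ and $2|H|\,\omega_{1}^{2}(E_{2})=0$ for $k=2$. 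Hence $\omega_{1}^{2}=0$, and since $\omega_{1}^{2}=-\omega_{3}^{4}$ we conclude $\omega_{3}^{4}=0$ on our open set, contradicting its choice. (The $E_{4}$-component of Codazzi, together with $A_{4}=0$, delivers the consistent relation $\det A_{3}=0$.) Therefore $M^{2}$ is PMC.

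The main obstacle I anticipate is the bookkeeping in the two-way computation of $\overline{\nabla}_{X}JH$: one must track carefully which pieces are tangent and which are normal (using $A_{4}=0$ and $JE_{4}=-E_{2}$) in order to pin down both $A_{3}=\mathrm{diag}(2|H|,0)$ and $\omega_{1}^{2}=-\omega_{3}^{4}$ simultaneously. It is precisely the resulting coincidence of the $A_{3}$-eigenframe with the $T$-adapted frame that converts the Codazzi identity for $A_{3}$ into a statement about $\omega_{3}^{4}$, which is what makes the final contradiction possible.
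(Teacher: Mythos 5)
Your proof is correct, but after the shared opening step it follows a genuinely different route from the paper's. Both arguments begin identically: CMC plus $(JT)^{\top}=0$ reduces biconservativity (Proposition \ref{Biconservativity Cdts} and the computation in Theorem \ref{PMC JT}) to $\trace A_{\nabla^{\perp}_{(\cdot)}H}(\cdot)=0$, and the resulting homogeneous system forces $A_{4}=0$ on the open set where $\nabla^{\perp}H\neq0$. From there the paper works in the eigenframe of $A_{3}$ (smooth eigenvalues $\lambda_{1}<\lambda_{2}$, which is where the no-pseudo-umbilical-points hypothesis enters), computes the normal curvature terms in Codazzi one case at a time using only $JT$ normal, extracts four scalar equations, case-splits on which component of $\omega_{3}^{4}$ is nonzero, and in the surviving case must derive flatness and then invoke the Gauss equation \eqref{the Gauss Equation}, $\frac{c}{4}\{1+3\langle JE_{1},E_{2}\rangle^{2}\}=-\lambda_{1}\lambda_{2}=0$, to contradict $c\neq0$. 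You instead first upgrade $(JT)^{\top}=0$ to total reality (disposing of $T\equiv0$ via the complex-curve/minimality argument, a case the paper's route never has to confront), then work in the $T$-adapted frame and exploit the K\"{a}hler identity $\overline{\nabla}_{X}JH=J\overline{\nabla}_{X}H$, which simultaneously pins down $A_{3}=\mathrm{diag}(2\vert H\vert,0)$ and yields the relation $\omega_{1}^{2}=-\omega_{3}^{4}$ — a relation absent from the paper. Since total reality kills all normal curvature terms in \eqref{the Codazzi Equation} at once, the Codazzi identity for $A_{3}$ gives $\omega_{1}^{2}=0$, hence $\omega_{3}^{4}=0$ directly: no case analysis and no Gauss equation. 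What each approach buys: the paper's stays within the minimal hypotheses ($JT$ normal, never total reality) and standard eigenframe bookkeeping; yours is shorter in the endgame, produces extra structure, and — notably — never actually uses the no-pseudo-umbilical hypothesis, since non-pseudo-umbilicity follows from $A_{3}=\mathrm{diag}(2\vert H\vert,0)$, so your argument proves Theorem \ref{thm to be generalised} for $c\neq0$ in one stroke, bypassing both Proposition \ref{PMC prop1} and the holomorphicity-of-$\langle A_{H}\partial_{z},\partial_{z}\rangle$ argument. One small imprecision, which is not load-bearing: in your frame the $E_{4}$-component of Codazzi gives $2\vert H\vert\,\omega_{3}^{4}(E_{2})=0$, i.e. $\omega_{3}^{4}(E_{2})=0$, rather than literally ``$\det A_{3}=0$''; the latter is how that component reads in the paper's eigenframe.
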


\begin{proof}
Let $\{ \lambda_{1}, \lambda_{2}\}$ be the  smooth eigenvalue functions of $A_{3}$ on $M^{2}$ and one can consider $\{E_{1}, E_{2}\}$ such that
$$
\lambda_{1} < \lambda_{2}, \quad A_{3}E_{1}=\lambda_{1}E_{1} \quad  \textnormal{and} \quad  A_{3}E_{2}=\lambda_{2}E_{2}.
$$
We note that $\trace A_{4}=0$.

Assume that 
$\nabla^{\perp}H\neq 0$. Then there exists an open subset of $M^{2}$ such that $\nabla^{\perp}H\neq 0$ at any point of this subset, and from now on we will work only here. For the sake of simplicity, we assume that this open subset is the whole manifold $M^{2}$.

As $M^{2}$ is CMC and $JT$ is normal, the biconservative condition
$$
2\trace A_{\nabla_{\cdot}^{\perp}H}(\cdot)+\grad (\vert H \vert ^{2})+2\trace(\overline{R}(\cdot,H)\cdot)^{\top}=0
$$
reduces to 
\begin{eqnarray}\label{traceA_nabla}
\trace A_{\nabla_{\cdot}^{\perp}H}(\cdot)=0.
\end{eqnarray}

Since
\begin{eqnarray*}
\trace A_{\nabla_{\cdot}^{\perp}E_{3}}(\cdot)&=&A_{\nabla_{E_{1}}^{\perp}E_{3}}E_{1}+A_{\nabla_{E_{2}}^{\perp}E_{3}}E_{2}
\\
&=&
A_{\omega_{3}^{4}(E_{1})E_{4}}E_{1}+A_{\omega_{3}^{4}(E_{2})E_{4}}E_{2}
\\
&=&
\omega_{3}^{4}(E_{1})A_{4}E_{1}+\omega_{3}^{4}(E_{2})A_{4}E_{2},
\end{eqnarray*}
the Equation (\ref{traceA_nabla}) can be written as  
\begin{eqnarray}\label{system}
\left\lbrace
\begin{array}{ccc}
\omega_{3}^{4}(E_{1})\langle A_{4}E_{1}, E_{1}\rangle +\omega_{3}^{4}(E_{2})\langle A_{4}E_{2}, E_{1}\rangle
&=&0
\\
\omega_{3}^{4}(E_{1})\langle A_{4}E_{1}, E_{2}\rangle +\omega_{3}^{4}(E_{2})\langle A_{4}E_{2}, E_{2}\rangle&=&0.
\end{array}\right.
\end{eqnarray}
Since $\nabla^{\perp} H\neq 0$, i.e.
\begin{eqnarray*}
\vert \nabla^{\perp}E_{3}\vert ^{2}=(\omega _{3}^{4}(E_{1}))^{2}+(\omega _{3}^{4}(E_{2}))^{2}>0,
\end{eqnarray*}
we have that the system (\ref{system}) has a non-trivial solution. Therefore, its determinant is zero, i.e. 
\begin{eqnarray}\label{norm of A4}
0&=&\langle A_{4}E_{1},E_{1} \rangle \langle A_{4}E_{2},E_{2}\rangle - (\langle A_{4}E_{1},E_{2}\rangle)^{2}\nonumber
\\
&=&
-(\langle A_{4}E_{1},E_{1} \rangle)^{2}-(\langle A_{4}E_{1},E_{2} \rangle)^{2}\nonumber
\\
&=&
-\vert A_{4} E_{1}\vert^{2},
\end{eqnarray}
where in the second equality we  used $\trace A_{4}=0$. Moreover, since 
$$
\vert A_{4}E_{1}\vert ^{2}=\vert A_{4}E_{2}\vert ^{2},
$$
from (\ref{norm of A4}) we get
$$
\vert A_{4} \vert ^{2}=2\vert A_{4}E_{1} \vert ^{2}=0, 
$$
that is $A_{4}=0$ on $M^{2}$.

The second fundamental form $B$ of $M^{2}$ is given by 
$$
B(E_{1},E_{1})=\lambda_{1}E_{3}, \quad B(E_{2},E_{2})=\lambda_{2}E_{3} \quad \textnormal{and} \quad B(E_{1},E_{2})=0,
$$
and therefore, $2H=(\lambda_{1}+\lambda_{2})E_{3}$ and $\lambda_{1}+\lambda_{2}=2\vert H \vert=constant \neq 0$.

Next, we will use again the Codazzi Equation. From  Equation (\ref{Tensorial Curvature}) we have
\begin{eqnarray*}
(\overline{R}(X,Y)Z)^{\perp}&=&\frac{c}{4} \{\langle JY,Z\rangle (JX)^{\perp}-\langle JX,Z\rangle (JY)^{\perp}+2\langle JY,X\rangle(JZ)^{\perp} \},
\end{eqnarray*}
and if 
\\
(i) $X=Z=E_{1}$, $Y=E_{2}$, we obtain
\begin{eqnarray*}
\langle (\overline{R}(E_{1},E_{2})E_{1})^{\perp},E_{3}\rangle &=& \frac{3c}{4}\langle JE_{2},E_{1}\rangle\langle JE_{1},E_{3}\rangle
=
\frac{3c}{4\vert H \vert}\langle JE_{2},E_{1}\rangle\langle JE_{1},H\rangle
\\
&=&
-\frac{3c}{4\vert H \vert}\langle JE_{2},E_{1}\rangle\langle E_{1},JH\rangle
=
-\frac{3c}{4\vert H \vert}\langle JE_{2},E_{1}\rangle\langle E_{1},T\rangle
\\
&=&
-\frac{3c}{4\vert H \vert}\langle JE_{2},T\rangle
=
-\frac{3c}{4\vert H \vert}\langle E_{2},JT\rangle
\\
&=&
0
\end{eqnarray*}
since $JT$ is normal.
In the same way we obtain
\begin{eqnarray}\label{1-codazzi2}
\langle (\overline{R}(X,Y)Z)^{\perp},V\rangle=0
\end{eqnarray}
for
\\
(ii) $X=E_{1}$, $Y=Z=E_{2}$, $V=E_{3},$
\\
(iii) $X=Z=E_{1}$, $Y=E_{2}$, $V=E_{4},$
\\
(iv) $X=E_{1}$, $Y=Z=E_{2}$, $V=E_{4}$.

Hence, for $X$, $Y$, $Z$ and $V$ as in the previous cases, the Codazzi Equation  (\ref{the Codazzi Equation}) in each case is as follows:
\begin{enumerate}
\item \label{derivation of E1}
$E_{2}(\lambda_{1})=(\lambda_{1}-\lambda_{2})\omega_{1}^{2}(E_{1})$,
\item\label{derivative of E2}
$E_{1}(\lambda_{2})=(\lambda_{1}-\lambda_{2})\omega_{1}^{2}(E_{2})$,
\item\label{product lambda1}
$\lambda_{1}\omega_{3}^{4}(E_{2})=0,$
\item \label{product lambda2}
$\lambda_{2}\omega_{3}^{4}(E_{1})=0.$
\end{enumerate}

Assume that $\omega_{3}^{4}(E_{1})\neq 0$ at some point $p\in M^{2}$, then 
$\omega_{3}^{4}(E_{1})\neq 0$ on a neighborhood of 
$p$. On this neighborhood, by (\ref{product lambda2}) we have $\lambda_{2}=0$ and so 
$\lambda_{1}=2\vert H \vert$, which is a contradiction since 
$\lambda_{1}<\lambda_{2}=0$.

If $\omega_{3}^{4}(E_{2})\neq0$, we get $\lambda_{1}=0$ on an open subset and so $\lambda_{2}=2\vert H \vert$. From (\ref{derivation of E1}) and (\ref{derivative of E2}), we obtain $\omega_{1}^{2}(E_{1})=0$ and $\omega_{1}^{2}(E_{2})=0$. We will use the same notation 
$\omega_{1}^{2}$ for the pull-back of $\omega_{1}^{2}$ on $M^{2}$. Therefore, on $M^{2}$, 
$\omega_{1}^{2}=0$ and 
$\nabla_{E_{i}}E_{j}=0$, for any $i,j\in\{1,2\}$. Since the curvature of $M^{2}$ is given by $d\omega_{1}^{2}=-K\omega^{1}\wedge \omega^{2}$, we conclude that $M^{2}$ is flat.

Further, from the Gauss Equation \eqref{the Gauss Equation}, for $X=W=E_{1}$, $Y=Z=E_{2}$ and using the fact that $M^{2}$ is flat, we obtain
$$
\frac{c}{4}\{1+3\langle JE_{1},E_{2}\rangle ^{2} \}=-\lambda_{1} \lambda_{2}=0,
$$
which is a contradiction, as $c\neq 0$.

Therefore $\nabla^{\perp}H=0$.

Even if we got a contradiction and proved that $H$ is parallel, we note that, when $H$ is not parallel, $\lambda_{1}=0$, 
 $\lambda_{2}=2\vert H \vert$, $\omega_{1}^{2}=0$, $K=0$ and $\omega_{3}^{4}(E_{1})=0$, from the Ricci Equation \eqref{the Ricci Equation}, we obtain
$$
E_{1}(\omega_{3}^{4}(E_{2}))=\frac{c}{4\vert H \vert}\Big{\{} -\langle E_{2},T\rangle \langle JE_{1},E_{4}\rangle+\langle E_{1},T\rangle \langle JE_{2},E_{4}\rangle+2\langle JE_{2},E_{1}\rangle\langle N,E_{4}\rangle \Big{\}}.
$$
\end{proof}

From Propositions \ref{PMC prop1} and \ref{PMC prop2}, we get the following theorem.

\begin{theorem}\label{thm to be generalised}
Let $M^{2}$ be a CMC biconservative surface in a complex space form $N^{2}(c)$, with $c\neq 0$. If $JT$ is normal, then $M^{2}$ is PMC.
\end{theorem}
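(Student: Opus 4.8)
The plan is to combine Propositions \ref{PMC prop1} and \ref{PMC prop2} by a decomposition-and-density argument: the first handles the pseudo-umbilical points and the second the non-pseudo-umbilical ones, while the hypothesis that $JT$ is normal is available at every point.

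First I would set $W=\{p\in M^{2}:A_{H}\neq\vert H\vert^{2}\id\ \text{at}\ p\}$, the (open) set of non-pseudo-umbilical points, and let $U=\Int(M^{2}\setminus W)$ be the open set of points that admit a fully pseudo-umbilical neighborhood. Since $\vert H\vert$ is constant, $\Div S_{2}=0$ holds pointwise, and $JT$ is normal pointwise, each of these three hypotheses passes to any open subset; hence both $W$ and $U$ are again CMC biconservative surfaces in $N^{2}(c)$ on which $JT$ is normal. On $W$ there are no pseudo-umbilical points, so Proposition \ref{PMC prop2} applies and yields $\nabla^{\perp}H=0$ on $W$. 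On $U$ the surface is pseudo-umbilical everywhere, so Proposition \ref{PMC prop1} applies and yields $\nabla^{\perp}H=0$ on $U$ (its further conclusion that $JT$ is normal is automatic and consistent with our assumption). Thus $\nabla^{\perp}H=0$ on the open set $G=W\cup U$.

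It then remains to treat $M^{2}\setminus G$. A point lies there precisely when it is pseudo-umbilical yet every neighborhood meets $W$, so $M^{2}\setminus G$ is contained in the topological boundary $\partial W$. Because the boundary of an open set is closed with empty interior, $G$ is open and dense in $M^{2}$. As $\nabla^{\perp}H$ is a smooth, hence continuous, normal-bundle tensor that vanishes on the dense set $G$, it vanishes identically on $M^{2}$. Therefore $M^{2}$ is PMC.

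The genuine analytic content—forcing $A_{4}=0$ via the non-triviality of the biconservativity system and then exploiting the Codazzi equation—is carried entirely by the two propositions; the only delicate point left is the transition across $\partial W$, where neither proposition applies directly. I expect this to be the main (though purely topological) obstacle, and it is dissolved by the density of $G$ together with the continuity of $\nabla^{\perp}H$, rather than by any additional geometric input.
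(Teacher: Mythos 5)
Your proof is correct, but it takes a genuinely different route from the paper's. The paper obtains its open dense set analytically: it invokes the holomorphicity of $\langle A_{H}\partial_{z},\partial_{z}\rangle$ for CMC biconservative surfaces (citing \cite{Nistor1}) to conclude that either $M^{2}$ is pseudo-umbilical everywhere, or the pseudo-umbilical points are isolated, so that the non-pseudo-umbilical set $W$ is open and dense; it then applies Proposition \ref{PMC prop1} in the first case, and Proposition \ref{PMC prop2} on $W$ plus continuity of $\nabla^{\perp}H$ in the second. You replace this analytic dichotomy with pure point-set topology: for any open $W$, the set $W\cup\Int(M^{2}\setminus W)$ is automatically open and dense (its complement sits inside $\partial W$, which has empty interior), so you run Proposition \ref{PMC prop2} on $W$, Proposition \ref{PMC prop1} on the interior of the pseudo-umbilical set, and bridge the nowhere-dense remainder by continuity. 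Both arguments rest on the same two facts you correctly note: the hypotheses (CMC, biconservative, $JT$ normal) are pointwise and so restrict to open subsets, and the two propositions are local in character --- their proofs derive contradictions on open subsets and never use completeness --- so they legitimately apply to $W$ and $U$. What your version buys is self-containedness and a touch more generality: no Hopf-differential machinery is needed, and the argument would survive under hypotheses too weak to make the quadratic differential holomorphic (nor does it need $M^{2}$ connected). What the paper's version buys is sharper structure --- pseudo-umbilical points are either everything or isolated --- which the paper reuses elsewhere (for instance in the connectedness and density arguments in the proof of Theorem \ref{Reduction theorem}); for the present statement, that extra strength is simply not needed.
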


\begin{proof}
We know that, with standard notations, 
$$
\langle A_{H}\partial_{z},\partial_{z}\rangle 
$$
is holomorphic (see \cite{Nistor1}). Therefore, either $M^{2}$ is pseudo-umbilical in the complex space form $N^{2}(c)$, or the set $W$ of the non pseudo-umbilical points is an open and dense subset of $M^{2}$.

In the first case, the result follows directly from Proposition \ref{PMC prop1}. In the second case, from Proposition \ref{PMC prop2} we get that $W$ is PMC in the complex space form $N^{2}(c)$, and then by continuity we obtain $M^{2}$ is PMC.
\end{proof}

\begin{remark}
When the ambient space is a real space form of dimension $4$, a similar result to Theorem \ref{thm to be generalised} was obtained in   \cite[Theorem 5.1]{M.O.R.}.
\end{remark}

In the following, we want to check if one can extend the above result to the case $c=0$, i.e. we want to see whether the CMC biconservative surfaces in  $\mathbb{C}^{2}=\mathbb{E}^{4}$ with $JT$ normal are PMC. Equivalently, we investigate if the CMC biconservative surfaces which are not PMC have 
$(JT)^{\top}\neq 0$.

The parametric equations for the CMC biconservative surfaces which are not PMC were given in \cite{M.O.R.}.

\begin{proposition}(\cite{M.O.R.})
Let $M^{2}$ be a non-PMC biconservative surface with constant mean curvature in
$\mathbb{E}^{4}$. Then, locally, the surface is given by 
\begin{eqnarray}\label{surface}
X(u,v)=(\gamma^{1}(u),\gamma^{2}(u),\gamma^{3}(u), v),
\end{eqnarray} 
where $\gamma :I \to \mathbb{E}^{3}$ is a curve in $\mathbb{E}^{3}$ parametrized by arc-length, with constant non-zero curvature, and non-zero torsion.
\end{proposition}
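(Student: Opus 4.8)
The plan is to run an adapted moving-frame argument parallel to the proofs of Propositions~\ref{PMC prop1} and~\ref{PMC prop2}, but now exploiting the vanishing of the ambient curvature tensor $\overline{R}$ in $\mathbb{E}^{4}=\mathbb{C}^{2}$. Since $M^{2}$ is non-PMC, the set where $\nabla^{\perp}H\neq 0$ is open and nonempty, and because the assertion is local I would work there. On such a neighborhood I fix the orthonormal frame $\{E_{1},E_{2},E_{3}=H/\vert H\vert,E_{4}\}$ with connection $1$-forms $\omega_{A}^{B}$ as in~\eqref{Connections}, so that $\nabla^{\perp}E_{3}=\omega_{3}^{4}E_{4}$ and the non-PMC hypothesis reads $(\omega_{3}^{4}(E_{1}))^{2}+(\omega_{3}^{4}(E_{2}))^{2}>0$. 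As $H=\vert H\vert E_{3}$ with $\vert H\vert$ constant and $\overline{R}=0$, the biconservativity condition of Proposition~\ref{Biconservativity Cdts}(3) collapses to $\trace A_{\nabla_{(\cdot)}^{\perp}E_{3}}(\cdot)=0$.

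From here I would repeat the linear-algebra step of Proposition~\ref{PMC prop2}. Since $\trace A_{4}=2\langle H,E_{4}\rangle=0$, the condition $\trace A_{\nabla_{(\cdot)}^{\perp}E_{3}}(\cdot)=0$ produces, exactly as in~\eqref{system}, a homogeneous linear system in the unknowns $\omega_{3}^{4}(E_{1}),\omega_{3}^{4}(E_{2})$; as this pair is nontrivial the determinant must vanish, which by~\eqref{norm of A4} forces $A_{4}=0$. Hence $B$ takes values along $E_{3}=H/\vert H\vert$, and I may take $\{E_{1},E_{2}\}$ to diagonalize $A_{3}$, with eigenvalues $\lambda_{1},\lambda_{2}$ satisfying $\lambda_{1}+\lambda_{2}=2\vert H\vert$ constant. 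Feeding $A_{4}=0$ into the $E_{4}$-components of the Codazzi equation~\eqref{the Codazzi Equation} (with $\overline{R}=0$) reproduces the relations $\lambda_{2}\omega_{3}^{4}(E_{1})=0$ and $\lambda_{1}\omega_{3}^{4}(E_{2})=0$. Since $\lambda_{1}=\lambda_{2}=0$ would contradict $\vert H\vert\neq 0$, exactly one of $\omega_{3}^{4}(E_{1}),\omega_{3}^{4}(E_{2})$ is nonzero at each point; after relabeling I may assume $\omega_{3}^{4}(E_{2})\neq 0$, forcing $\lambda_{1}=0$, $\lambda_{2}=2\vert H\vert$ and $\omega_{3}^{4}(E_{1})=0$.

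Next the $E_{3}$-components of Codazzi give $0=(\lambda_{1}-\lambda_{2})\omega_{1}^{2}(E_{1})$ and $0=(\lambda_{1}-\lambda_{2})\omega_{1}^{2}(E_{2})$, and since $\lambda_{1}-\lambda_{2}=-2\vert H\vert\neq 0$ this forces $\omega_{1}^{2}=0$. Thus $\nabla_{E_{i}}E_{j}=0$, so $M^{2}$ is flat and $\{E_{1},E_{2}\}$ is a parallel tangent frame giving flat coordinates $(u,v)$ with $E_{1}=\partial_{v}$, $E_{2}=\partial_{u}$. Because $\lambda_{1}=0$ and $A_{4}=0$ give $B(E_{1},\cdot)=0$, together with $\nabla_{X}E_{1}=0$ I obtain $\overline{\nabla}_{X}E_{1}=0$ for every tangent $X$; hence $E_{1}$ is a constant unit vector of $\mathbb{E}^{4}$ and the $v$-curves are parallel straight lines. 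This exhibits $M^{2}$ as a cylinder ruled in the direction $E_{1}$, and projecting orthogonally onto the hyperplane $E_{1}^{\perp}\cong\mathbb{E}^{3}$ produces a unit-speed base curve $\gamma$ with $X(u,v)=\gamma(u)+vE_{1}$, which in suitable Euclidean coordinates is the stated parametrization~\eqref{surface}.

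Finally a Frenet computation along $\gamma=X(\cdot,0)$ closes the classification: $\overline{\nabla}_{E_{2}}E_{2}=B(E_{2},E_{2})=2\vert H\vert E_{3}$ shows $\gamma$ has constant curvature $\kappa=2\vert H\vert\neq 0$ with principal normal $E_{3}$, while $\overline{\nabla}_{E_{2}}E_{3}=-2\vert H\vert E_{2}+\omega_{3}^{4}(E_{2})E_{4}$ identifies the torsion $\tau=\omega_{3}^{4}(E_{2})\neq 0$, which is exactly where the non-PMC hypothesis enters. The step I expect to be the main obstacle is this last reconstruction, namely upgrading the pointwise frame identities to the \emph{rigid} conclusion that $E_{1}$ is a globally constant direction and that the base curve genuinely lies in a fixed $\mathbb{E}^{3}$: one must make the relabeling of $\omega_{3}^{4}(E_{i})$ consistent across the whole neighborhood and verify that the flat product structure integrates to an honest cylinder over a space curve rather than merely a surface carrying a local parallel ruling.
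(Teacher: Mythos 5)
Your proposal is essentially correct, but be aware that the paper contains no proof of this proposition at all: it is quoted verbatim from \cite{M.O.R.}. What you have produced is a reconstruction obtained by running the moving-frame machinery of Proposition \ref{PMC prop2} at $c=0$, and it does work: with $\overline{R}=0$ and $\vert H\vert$ constant, Proposition \ref{Biconservativity Cdts}(3) collapses to $\trace A_{\nabla^{\perp}_{(\cdot)}H}(\cdot)=0$; on the open set where $\nabla^{\perp}H\neq 0$ the determinant argument of \eqref{system}--\eqref{norm of A4} gives $A_{4}=0$; the $E_{4}$-components of Codazzi give $\lambda_{1}\omega_{3}^{4}(E_{2})=\lambda_{2}\omega_{3}^{4}(E_{1})=0$ and the $E_{3}$-components give $\omega_{1}^{2}=0$. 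The structural difference from Proposition \ref{PMC prop2} is exactly where your route pays off: for $c\neq 0$ the proof ends in a contradiction because the Gauss equation \eqref{the Gauss Equation} is incompatible with $\lambda_{1}\lambda_{2}=0$ and flatness, whereas for $c=0$ that equation is automatically satisfied, and what survives is the parallel ruling $\overline{\nabla}E_{1}=\nabla E_{1}+B(E_{1},\cdot)=0$, which integrates to the cylinder \eqref{surface} with $\kappa=2\vert H\vert$ and $\tau=\pm\,\omega_{3}^{4}(E_{2})\neq 0$.

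Two refinements. First, you diagonalize $A_{3}$ before knowing its eigenvalues are distinct; a smooth eigenframe is not automatic at points where $A_{3}$ is umbilical (with $A_{4}=0$ these are precisely the pseudo-umbilical points). The clean fix is to write the $E_{4}$-component of Codazzi in an \emph{arbitrary} orthonormal frame: with $A_{4}=0$ it reads $\omega_{3}^{4}(E_{1})A_{3}E_{2}=\omega_{3}^{4}(E_{2})A_{3}E_{1}$, so the nonzero vector $\omega_{3}^{4}(E_{1})E_{2}-\omega_{3}^{4}(E_{2})E_{1}$ lies in $\ker A_{3}$; hence the eigenvalues are $0$ and $2\vert H\vert$ everywhere on the non-PMC set, they are distinct, the eigenframe is smooth, and your relabeling is forced globally on the connected neighborhood (the set where $\lambda_{1}=0$ and the set where $\lambda_{1}=2\vert H\vert$ are both closed and disjoint) rather than chosen pointwise. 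Second, the step you flagged as the main obstacle is in fact immediate and needs no extra integrability argument: $\overline{\nabla}E_{1}=0$ makes $E_{1}$ a fixed vector $e\in\mathbb{E}^{4}$, and since $\langle X_{u},e\rangle=\langle E_{2},E_{1}\rangle=0$ the base curve $u\mapsto X(u,0)$ lies, after a translation, in the fixed hyperplane $e^{\perp}\cong\mathbb{E}^{3}$, which together with $\{E_{2},E_{3},E_{4}\}\subset e^{\perp}$ gives the Frenet data there.
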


Now we will prove that any CMC biconservative surface which is not PMC  has $(JT)^{\top}\neq 0$.

\begin{proposition}
Let $M^{2}$  be a non-PMC biconservative surface with constant mean curvature in 
$\mathbb{E}^{4}$. Then $(JT)^{\top}\neq 0$.
\end{proposition}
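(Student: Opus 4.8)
The goal is to show that for a non-PMC CMC biconservative surface $M^2$ in $\mathbb{E}^4=\mathbb{C}^2$, the tangential part $(JT)^{\top}$ does not vanish. The natural strategy is to argue by contrapositive: I would show that if $(JT)^{\top}=0$, then $M^2$ must be PMC, contradicting the hypothesis. The key observation is that the entire argument of Theorem \ref{thm to be generalised} (via Propositions \ref{PMC prop1} and \ref{PMC prop2}) was carried out under the assumption $JT$ normal, and most of those computations used only the Codazzi and Gauss equations together with the biconservativity condition. The plan is to examine which of those steps genuinely required $c\neq 0$ and which survive at $c=0$.

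First I would set up the explicit geometry. Using the parametrization \eqref{surface}, with $\gamma$ of constant curvature $\kappa\neq 0$ and non-zero torsion $\tau\neq 0$, I would compute a natural orthonormal frame $\{E_1,E_2\}$ tangent to $M^2$ and $\{E_3,E_4\}$ normal, where $E_3=H/|H|$. The virtue of working with the concrete curve $\gamma$ is that $T=(JH)^{\top}$ and $N=(JH)^{\perp}$ can be computed directly in terms of the Frenet frame of $\gamma$ and the complex structure $J$ of $\mathbb{C}^2$. I expect that the torsion $\tau\neq 0$ is exactly what forces $H$ to be non-parallel (the surface is non-PMC precisely because of the twisting of $\gamma$), and I would aim to show that this same torsion term feeds into $(JT)^{\top}$ and makes it nonzero.

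Alternatively, and perhaps more cleanly, I would run the abstract argument of Proposition \ref{PMC prop2} assuming $(JT)^{\top}=0$. Since $M^2$ is CMC and biconservative, Proposition \ref{Biconservativity Cdts}(3) together with $(JT)^{\top}=0$ (which kills the curvature term, since at $c=0$ the term $\frac{3}{4}c(JT)^{\top}$ vanishes anyway) reduces to $\trace A_{\nabla_{(\cdot)}^{\perp}H}(\cdot)=0$. Running the determinant computation \eqref{norm of A4} verbatim — which never used $c\neq 0$ — forces $A_4=0$ on the open set where $\nabla^{\perp}H\neq 0$. This would pin down the shape operators just as in the non-flat case and make $M^2$ effectively a hypersurface-like object, which should be incompatible with $\tau\neq 0$.

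The main obstacle will be the final contradiction. In Proposition \ref{PMC prop2} the contradiction came from the Gauss equation: flatness plus $A_4=0$ gave $\frac{c}{4}\{1+3\langle JE_1,E_2\rangle^2\}=-\lambda_1\lambda_2=0$, which is absurd only because $c\neq 0$. At $c=0$ this equation is vacuous, so I cannot reuse it. Hence the delicate point is to produce an independent contradiction at $c=0$, and this is where I expect to need the explicit curve data: I would have to show directly that the conditions $A_4=0$, $\lambda_1=0$, $\lambda_2=2|H|$, $\omega_1^2=0$ extracted from the Codazzi relations \eqref{derivation of E1}--\eqref{product lambda2} are inconsistent with $\gamma$ having nonzero torsion, or equivalently that they would force $\nabla^{\perp}H=0$ after all. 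In short, the conceptual core is cheap, but removing the reliance on the curvature term $c$ means the genuine work lies in the direct Frenet-frame computation showing that non-zero torsion guarantees $(JT)^{\top}\neq 0$.
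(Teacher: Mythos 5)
Your first approach --- arguing by contrapositive through the explicit parametrization \eqref{surface} and the Frenet frame of $\gamma$ --- is exactly the route the paper takes, but your proposal stops precisely where the proof begins. The paper computes $H=\tfrac{1}{2}\kappa\mathbf{n}=\tfrac{1}{2}\gamma''$, hence
\[
T=(JH)^{\top}=\tfrac{1}{2}\left\{\langle J\gamma'',\gamma'\rangle\,\gamma'+\langle J\gamma'',e_{4}\rangle\, e_{4}\right\},
\]
and observes that $JT$ normal is equivalent to the two scalar conditions $\langle J\gamma'',e_{4}\rangle\langle J\gamma',e_{4}\rangle=0$ and $\langle J\gamma'',\gamma'\rangle\langle J\gamma',e_{4}\rangle=0$; that is, either $\langle J\gamma',e_{4}\rangle=0$, i.e.\ $(\gamma^{3})'=0$, so $\gamma$ is planar and $\tau=0$ (contradiction), or $(\gamma^{3})''=0$ together with $(\gamma^{2})'(\gamma^{1})''-(\gamma^{1})'(\gamma^{2})''=0$. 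In the second case, writing $(\gamma^{1})'=\sqrt{1-a^{2}}\cos f$, $(\gamma^{2})'=\sqrt{1-a^{2}}\sin f$ with $a^{2}\in(0,1)$, the Wronskian condition forces $f'=0$, while $\kappa^{2}=(1-a^{2})(f')^{2}>0$ forces $f'\neq0$ --- contradiction. None of this appears in your proposal; statements such as ``I expect that the torsion term feeds into $(JT)^{\top}$ and makes it nonzero'' are the assertion to be proved, not an argument, so the decisive content of the proof is missing.

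More importantly, your ``perhaps cleaner'' abstract alternative cannot be repaired, and it is worth seeing why. At $c=0$ the hypothesis $(JT)^{\top}=0$ enters \emph{none} of the equations you propose to use: the curvature terms in Proposition \ref{Biconservativity Cdts} and in the Gauss, Codazzi and Ricci equations vanish identically, and none of those equations involves $J$ at all. Consequently, if running the argument of Proposition \ref{PMC prop2} at $c=0$ produced a contradiction, it would prove that \emph{every} CMC biconservative surface in $\mathbb{E}^{4}$ with $\nabla^{\perp}H\neq0$ fails to exist --- which is false, since the surfaces \eqref{surface} exist. Indeed, those surfaces satisfy every condition your plan extracts: taking $E_{3}=\mathbf{n}$, $E_{4}=\mathbf{b}$, one has $A_{4}=0$, eigenvalues $\lambda_{1}=0$ and $\lambda_{2}=\kappa=2|H|$, $\omega_{1}^{2}=0$, $K=0$, and $\omega_{3}^{4}$ proportional to the torsion $\tau\neq0$. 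So these conditions are perfectly consistent with nonzero torsion and with $\nabla^{\perp}H\neq0$, and your hope that they ``would force $\nabla^{\perp}H=0$ after all'' is provably wrong. The hypothesis $(JT)^{\top}=0$ can only act through a computation that applies $J$ explicitly to the concrete frame of the parametrized surface --- which is precisely the Frenet-frame computation you deferred.
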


\begin{proof}
Let $X(u,v)=(\gamma^{1}(u),\gamma^{2}(u),\gamma^{3}(u), v)$, where $\gamma :I \to \mathbb{E}^{3}\equiv \mathbb{E}^{3}\times\{ 0\}\subset \mathbb{E}^{4}$ is a curve parametrized by arc-length, i.e. $\vert \gamma ^{'} \vert =1$, with $\kappa=constant$, $\kappa \neq 0$, and $\tau \in C^{\infty}(I)$ is a non-zero function (we can assume that $\tau > 0$). We denote the Frenet frame field along $\gamma$ by 
$$
\{\gamma^{'}(u), {\bf n}(u), {\bf b}(u)\},\quad u\in I.
$$
We have
\begin{eqnarray*}
\left\lbrace
\begin{array}{ccc}
X_{u}&=& (\gamma^{'},0)=\gamma^{'}\\\
X_{v}&=& e_{4} .
\end{array}\right.
\end{eqnarray*}
It is clear that ${\bf n}$ and ${\bf b}$ are orthogonal to $\gamma^{'}$ and $e_{4}$, thus $\{{\bf n}, {\bf b}\}$ is an orthonormal frame field in the normal bundle of $M^{2}$ in $\mathbb{E}^{4}$. 

Also, we have
$g_{11}=\vert X_{u}\vert ^{2}=1$, $g_{12}=\langle X_{u},X_{v}\rangle=0$ and $g_{22}=\vert X_{v} \vert ^{2}=1$.
\begin{eqnarray*}
\left\lbrace
\begin{array}{ccc}
\nabla_{X_{u}}^{\mathbb{R}^{4}}X_{u}&=& X_{uu}=\gamma^{''}=\kappa {\bf n}
\\\
\nabla_{X_{u}}^{\mathbb{R}^{4}}X_{v}&=& \nabla_{X_{v}}^{\mathbb{R}^{4}}X_{u}=0
\\\ 
\nabla_{X_{v}}^{\mathbb{R}^{4}}X_{v}&=& X_{vv}=0,
\end{array}\right.
\end{eqnarray*}
then $B(X_{u},X_{u})=\kappa {\bf n}$, $B(X_{u},X_{v})=0$ and $B(X_{v},X_{v})=0$.
We have 
\begin{eqnarray*}
H&=&\frac{1}{2}\trace B
= \frac{1}{2} \{B(X_{u},X_{u})+B(X_{v},X_{v})\}
\\
&=&\frac{1}{2}\kappa {\bf n}=\frac{1}{2} \gamma^{''},
\end{eqnarray*}
and, therefore, 
$$
JH=\frac{1}{2}J\gamma^{''}.
$$

Now, the tangential part of $JH$ is given by
\begin{eqnarray*}
T=(JH)^{\top}&=&\langle JH,X_{u}\rangle X_{u}+\langle JH,X_{v}\rangle X_{v}
\\
&=&
\frac{1}{2}\{\langle J\gamma^{''},\gamma^{'}\rangle \gamma^{'}+\langle J\gamma^{''}, e_{4}\rangle e_{4}\}.
\end{eqnarray*}

We will prove that $JT$ is not normal. For this purpose, we will assume that $JT$ is normal and come to a contradiction. 

Since $JT$ is normal, we have 
$$
\langle JT,X_{u}\rangle=0 \quad \textnormal{and} \quad \langle JT,X_{v}\rangle=0.
$$
Thus
\begin{eqnarray*}
\langle JT,X_{u}\rangle =0 
&\Leftrightarrow &
\langle J\gamma^{''},\gamma^{'}\rangle \langle J\gamma^{'},\gamma^{'}\rangle + \langle J\gamma^{''},e_{4}\rangle \langle Je_{4},\gamma^{'}\rangle
=0
\\
&\Leftrightarrow &\langle J\gamma^{''},e_{4}\rangle \langle Je_{4},\gamma^{'}\rangle
=0,
\end{eqnarray*}
and 
\begin{eqnarray*}
\langle JT,X_{v}\rangle =0 
&\Leftrightarrow &
\langle J\gamma^{''},\gamma^{'}\rangle \langle J\gamma^{'},e_{4}\rangle + \langle J\gamma^{''},e_{4}\rangle \langle Je_{4},e_{4}\rangle
=0
\\
&\Leftrightarrow &
\langle J\gamma^{''},\gamma^{'}\rangle \langle J\gamma^{'},e_{4}\rangle=0.
\end{eqnarray*}
Therefore, $JT$ is normal if and only if 
$$
\langle J\gamma^{'},e_{4}\rangle =0
$$
on $I$, or
$$
\langle J\gamma^{''},e_{4}\rangle = 0   \quad
\textnormal{and} \quad
\langle J\gamma^{''},\gamma^{'}\rangle = 0.
$$
We have, $J\gamma^{'}=(-(\gamma^{2})^{'},(\gamma^{1})^{'},0,(\gamma^{3})^{'})$ and $J\gamma^{''}=(-(\gamma^{2})^{''},(\gamma^{1})^{''},0,(\gamma^{3})^{''})$, so
$$
\langle J\gamma^{''},e_{4}\rangle = 0 \Leftrightarrow (\gamma^{3})^{''}=0,
$$
$$
\langle J\gamma^{'},e_{4}\rangle =0 \Leftrightarrow (\gamma^{3})^{'}=0
$$
and 
$$
\langle J\gamma^{''},\gamma^{'}\rangle = 0 \Leftrightarrow -(\gamma^{1})^{'}(\gamma^{2})^{''}+(\gamma^{2})^{'}(\gamma^{1})^{''}=0.
$$
Assume that $\langle J\gamma^{'}, e_{4}\rangle =0$, i.e. 
$(\gamma^{3})^{'}=0$. We obtain that $\gamma^{3}=constant$, so $\gamma$ is a plane curve, and therefore $\tau=0$ which is a contradiction.
Hence we are left with the second case, i.e.
$$
(\gamma^{3})^{''}=0 \quad \text{and} \quad -(\gamma^{1})^{'}(\gamma^{2})^{''}+(\gamma^{2})^{'}(\gamma^{1})^{''}=0.
$$
From $(\gamma^{3})^{''}=0$, we get $\gamma^{3}(u)=au+b$, for any $u\in I$ (or a smaller open interval), and from $\vert \gamma^{'}\vert =1$ we obtain 
$((\gamma^{1})^{'})^{2} +((\gamma^{2})^{'})^{2}+a^{2}=1$.
As $\tau \neq 0$, $a^{2}\in (0,1)$.
Then, there exists a smooth function $f$ such that 
$$
(\gamma^{1})^{'}=\sqrt{1-a^{2}} \cos f \quad \text{and} \quad (\gamma^{2})^{'}=\sqrt{1-a^{2}} \sin f
$$ and 
\begin{eqnarray*}
\left\lbrace
\begin{array}{ccc}
(\gamma^{1})^{''}&=&-\sqrt{1-a^{2}} f^{'}\sin f 
\\\
(\gamma^{2})^{''}&=&\sqrt{1-a^{2}}f^{'} \cos f.
\end{array}\right.
\end{eqnarray*}
Then, the condition 
$$
-(\gamma^{1})^{'}(\gamma^{2})^{''}+(\gamma^{2})^{'}(\gamma^{1})^{''}=0 \nonumber
$$
is equivalent to 
$$
-(1-a^{2})f^{'} \cos ^{2}f-(1-a^{2})f^{'} \sin ^{2}f =0
$$
which means
\begin{eqnarray} \label{property 1}
f=constant.
\end{eqnarray} 

On the other hand, 
$$
\kappa^{2} =\vert \gamma^{''}\vert ^{2}=
[(\gamma^{1})^{''}]^{2}+[(\gamma^{2})^{''}]^{2}=(1-a^{2})(f^{'})^{2}.
$$
As $\kappa > 0$ we obtain 
\begin{eqnarray}\label{property 2}
(f^{'})^{2}>0.
\end{eqnarray}

From (\ref{property 1}) and (\ref{property 2}) we get a contradiction.
\end{proof}

In conclusion, we can extend Theorem \ref{thm to be generalised}  to the case $c=0$, and state the following theorem.

\begin{theorem}
Let $M^{2}$ be a CMC biconservative surface in a complex space form $N^{2}(c)$. If $JT$ is normal, then $M^{2}$ is PMC and totally real.
\end{theorem}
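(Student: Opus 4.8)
The plan is to combine the non-flat case, already settled in Theorem~\ref{thm to be generalised}, with a separate treatment of the flat case $c=0$, and then to upgrade the conclusion from PMC to PMC-and-totally-real using the characterization of totally real PMC surfaces established at the close of Section~3. Accordingly, I would first prove that $M^2$ is PMC, and only afterwards deduce that it is totally real; the whole argument is a synthesis of results already in hand rather than a new computation.

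For the PMC part I would split on the value of $c$. When $c\neq 0$, the hypotheses are exactly those of Theorem~\ref{thm to be generalised}, so $M^2$ is PMC with nothing further to check. When $c=0$ the ambient space is $\mathbb{C}^2=\mathbb{E}^4$, and here I would argue by contradiction using the preceding Proposition: suppose $M^2$ were a CMC biconservative surface with $JT$ normal but not PMC. That Proposition asserts that every non-PMC biconservative surface with constant mean curvature in $\mathbb{E}^4$ satisfies $(JT)^{\top}\neq 0$, i.e. $JT$ is not normal, contradicting the hypothesis. Hence $M^2$ is PMC in the flat case as well. In either case $M^2$ is PMC and $JT$ is normal, that is $JT\in C(NM^2)$.

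Once PMC is established, the totally real conclusion is immediate: the characterization theorem at the end of Section~3 states that a PMC surface in $N^n(c)$ satisfies $JT\in C(NM^2)$ if and only if it is totally real. Since we now know $M^2$ is PMC and $JT$ is normal, this equivalence yields at once that $M^2$ is totally real, completing the argument. The genuine difficulty of the statement is not in this assembly, which amounts to bookkeeping, but in its two inputs: the non-flat case rests on Propositions~\ref{PMC prop1} and~\ref{PMC prop2} underlying Theorem~\ref{thm to be generalised}, while the flat case relies on the explicit local classification of non-PMC CMC biconservative surfaces in $\mathbb{E}^4$ together with the Frenet-frame computation forcing $(JT)^{\top}\neq 0$. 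The only point needing care here is to identify the hypothesis ``$JT$ normal'' with the condition $(JT)^{\top}=0$ and to apply the contrapositive of the Proposition correctly in the case $c=0$.
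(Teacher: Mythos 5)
Your proposal is correct and follows essentially the same route as the paper: the paper obtains this theorem precisely by combining Theorem~\ref{thm to be generalised} (the case $c\neq 0$) with the contrapositive of the preceding Proposition on non-PMC CMC biconservative surfaces in $\mathbb{E}^{4}$ (the case $c=0$), and the totally real conclusion then follows from the general characterization that a PMC surface has $JT\in C(NM^{2})$ if and only if it is totally real. Your identification of ``$JT$ normal'' with $(JT)^{\top}=0$ and your use of the contrapositive are exactly the intended bookkeeping.
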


\begin{remark}
The full classification of complete PMC surfaces in a complex space form $N^{2}(c)$ was achieved in \cite{K-K1, K-K2, K-K3} and they are totally real flat tori, when $c>0$.
\end{remark}

\section{Reduction of codimension for biconservative surfaces in $N^{n}(c)$}

We  recall that if $M^{2}$ is a PMC surface in a  real Euclidean space $\mathbb{E}^{n}$, then it is biconservative, and it is either pseudo-umbilical (and lies as a minimal surface in a Euclidean hypersphere of $\mathbb{E}^{n}$), or it lies as a CMC (including minimal) surface in a $3$-dimensional sphere (and this sphere lies in $\mathbb{E}^4$), or it lies as a CMC surface in $\mathbb{E}^{3}$ (see  \cite{Chen surfaces with PMC, Chen PMC surfaces, Yau}).

In this section we will assume that $M^{2}$ is a PMC totally real surface in a complex space form $N^{n}(c)$ of complex dimension $n$, $n$ large enough, $c\neq 0$, with $H\neq 0$, and we will get a reduction of codimension result. More precisely, we will reduce the complex dimension of the ambient space for non pseudo-umbilical such surfaces, to $4$ (see Theorem \ref{Reduction theorem}). For this purpose, we will follow closely \cite{dorel}, where it was proved that a non pseudo-umbilical PMC surface in a complex space form $N^{n}(c)$, $c\neq 0$, lies in $N^{5}(c)$. The strategy for obtaining reduction results was initiated and developed in \cite{Alencar, Eschenburg-Tribuzy, Ferreira-Tribuzy}.
Our result is less restricted than that obtained in \cite{Fetcu-Pinheiro}, where under the stronger condition of biharmonicity, the reduction was done to $N^{2}(c)$. We mention that the reduction of codimension 
for totally real submanifolds of complex space forms, with parallel f-structure in the normal bundle, was obtained in \cite{k-N}.

\begin{lemma}
For any vector field $V$ normal to $M^{2}$ and orthogonal to 
$JTM^{2}$, we have 
$[A_{H},A_{V}] = 0$, i.e., $A_{H}$ commutes with $A_{V}$.
\end{lemma}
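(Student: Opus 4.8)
The plan is to exploit the Ricci equation exactly as was done in the proof of Theorem~\ref{Theorem}, where the vanishing of the ambient curvature term $\langle\overline{R}(X,Y)H,V\rangle$ forced $[A_{H},A_{JT}]=0$. Here the hypothesis is that $V$ is normal to $M^{2}$ and orthogonal to $JTM^{2}$, i.e.\ $\langle V, JW\rangle = 0$ for every $W$ tangent to $M^{2}$. First I would write down the Ricci equation \eqref{the Ricci Equation} with $U=H$, using that $M^{2}$ is PMC so that $R^{\perp}(X,Y)H$ drops out in the usual way; this gives $\langle[A_{H},A_{V}]X,Y\rangle = -\langle\overline{R}(X,Y)H,V\rangle$ for all $X,Y$ tangent to $M^{2}$.

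The heart of the argument is therefore to show that the right-hand side vanishes for this particular class of $V$. I would substitute the complex-space-form curvature tensor \eqref{Tensorial Curvature} into $\langle\overline{R}(X,Y)H,V\rangle$ and expand the five terms. The two terms $\langle Y,H\rangle\langle X,V\rangle$ and $\langle X,H\rangle\langle Y,V\rangle$ vanish because $X,Y$ are tangent while $V$ is normal. The remaining three terms all involve the complex structure $J$ paired against $V$:
\begin{eqnarray*}
\langle\overline{R}(X,Y)H,V\rangle
&=&\frac{c}{4}\Big\{\langle JY,H\rangle\langle JX,V\rangle-\langle JX,H\rangle\langle JY,V\rangle
+2\langle JY,X\rangle\langle JH,V\rangle\Big\}.
\end{eqnarray*}
Each of $\langle JX,V\rangle$ and $\langle JY,V\rangle$ is zero by the orthogonality hypothesis $V\perp JTM^{2}$ (since $JX,JY\in JTM^{2}$), and for the last term I would note $\langle JH,V\rangle = \langle T+N,V\rangle = \langle N,V\rangle$; here I would need $\langle N,V\rangle = 0$, which also follows from $V\perp JTM^{2}$ once one observes that the totally real assumption is \emph{not} invoked in this lemma, so one must check directly that $N=(JH)^{\perp}$ lies in $JTM^{2}$ or argue that the coefficient $\langle JY,X\rangle$ together with orthogonality still kills the term. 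Either way, all three surviving terms vanish.

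Having established $\langle\overline{R}(X,Y)H,V\rangle=0$ for all tangent $X,Y$, I conclude $\langle[A_{H},A_{V}]X,Y\rangle=0$ for all such $X,Y$, and since $[A_{H},A_{V}]$ is a self-adjoint endomorphism of the tangent space this forces $[A_{H},A_{V}]=0$, which is the claim. The step I expect to be the main obstacle is the treatment of the $\langle JH,V\rangle$ term: one must be careful about whether $N$ is automatically orthogonal to $V$ under only the stated hypothesis ``$V$ orthogonal to $JTM^{2}$,'' or whether an additional reduction (perhaps using that in the reduction-of-codimension setting $M^{2}$ is totally real, so $T=0$ and hence $N=JH$ while $T=0$ makes $JTM^{2}$ trivial) is implicitly in force. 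Clarifying that point is where the real care lies; the rest is a direct expansion using \eqref{Tensorial Curvature} and the self-adjointness of the commutator.
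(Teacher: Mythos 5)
Your skeleton is the same as the paper's: since $M^{2}$ is PMC, $R^{\perp}(X,Y)H=0$, so the Ricci equation \eqref{the Ricci Equation} with $U=H$ gives $\langle[A_{H},A_{V}]X,Y\rangle=-\langle\overline{R}(X,Y)H,V\rangle$, and the four terms of \eqref{Tensorial Curvature} involving $\langle X,V\rangle$, $\langle Y,V\rangle$, $\langle JX,V\rangle$, $\langle JY,V\rangle$ vanish exactly as you say. The genuine gap is your treatment of the last term $2\langle JY,X\rangle\langle JH,V\rangle$, where you hedge between two routes and your preferred one is false. First, you are mistaken that ``the totally real assumption is not invoked in this lemma'': the lemma sits in Section 5, whose standing hypothesis is that $M^{2}$ is a PMC \emph{totally real} surface, and total reality is precisely what the paper uses. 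It kills the \emph{coefficient}: for a totally real surface $JY$ is normal for every tangent $Y$, hence $\langle JY,X\rangle=0$ for all tangent $X,Y$, and the term dies with no information about $\langle JH,V\rangle$ needed. Second, your primary route --- proving $\langle JH,V\rangle=\langle N,V\rangle=0$ from $V\perp JTM^{2}$ --- cannot work: for a totally real surface, $\langle N,JX\rangle=\langle JH,JX\rangle-\langle T,JX\rangle=\langle H,X\rangle-0=0$, so $N$ itself lies in the orthogonal complement of $JTM^{2}$ inside the normal bundle, which is exactly the subbundle over which the admissible $V$ range. Taking $V=N$ shows $\langle N,V\rangle\neq 0$ whenever $N\neq 0$, and nothing in the hypotheses forces $N=0$. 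So ``either way'' is not available: only the coefficient argument closes the proof.

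Two further misconceptions in your closing paragraph should be corrected, since they are what led you astray. Totally real does \emph{not} mean $T=0$; it means $(JX)^{\top}=0$ for every \emph{tangent} $X$, i.e.\ $J(TM^{2})\subset NM^{2}$, while $T=(JH)^{\top}$ can be nonzero on such a surface (for instance, if $H\in C(JTM^{2})$ then $JH$ is tangent and $T=JH\neq0$). Likewise, $JTM^{2}$ in the statement means $J(TM^{2})$, the image of the tangent bundle under $J$ --- a rank-two subbundle of the normal bundle that is never ``trivial,'' regardless of whether the vector field $T$ vanishes. Note finally that total reality is not a removable convenience here: without it the surviving term is $\tfrac{c}{2}\langle JY,X\rangle\langle N,V\rangle$, which need not vanish, so the commutation $[A_{H},A_{V}]=0$ would not follow from this computation at all.
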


\begin{proof}
From the Ricci Equation \eqref{the Ricci Equation}, since 
$M^{2}$ is a PMC surface, we have $$
\langle R^{\perp}(X,Y)H,V\rangle=0,
$$
where $X$ and $Y$ are tangent to $M^{2}$.  Also, as $M^{2}$ is totally real and $V$ is orthogonal to $J(TM^{2})$, we obtain
\begin{eqnarray*}
\langle\overline{R}(X,Y)H,V\rangle &=&\frac{c}{4}\Big \{\langle Y,H\rangle\langle X,V\rangle-\langle X,H\rangle\langle Y,V\rangle +\langle JY,H\rangle \langle JX,V\rangle 
\\
&\ &
\quad -\langle JX,H\rangle\langle 
JY,V\rangle +2\langle JY,X \rangle \langle JH,V\rangle \Big \}
\\
&=&0,
\end{eqnarray*}
therefore 
$[A_{H},A_{V}]=0$.
\end{proof}

\begin{corollary}\label{Normalizing}
At any point $p\in M^{2}$,  either H is an umbilical direction, or there exists an orthonormal frame field $\{E_{1},E_{2}\}$ around $p$ that diagonalizes simultaneously $A_{H}$ and $A_{V}$, for any vector field $V$ normal to $M^{2}$ and orthogonal to $JTM^{2}$.  
\end{corollary}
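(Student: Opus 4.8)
The plan is to deduce this corollary as an immediate pointwise consequence of the preceding Lemma, using standard linear algebra about families of commuting symmetric operators. Fix a point $p \in M^2$. By the Lemma, $A_H$ commutes with $A_V$ at $p$ for every normal vector field $V$ that is orthogonal to $JTM^2$. Since $A_H$ is a symmetric endomorphism of the $2$-dimensional tangent space $T_pM^2$, I would first split into the two natural cases according to whether $A_H$ is a multiple of the identity.

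First I would treat the case where $A_H$ has a repeated eigenvalue at $p$, i.e. $A_H = \lambda \, \mathrm{I}$ on $T_pM^2$. This says precisely that $H$ is an umbilical direction at $p$, which is the first alternative in the statement, so nothing further is needed in this case. In the complementary case, $A_H$ has two distinct eigenvalues, and hence two one-dimensional eigenspaces spanning $T_pM^2$; choosing unit eigenvectors gives an orthonormal frame $\{E_1, E_2\}$ (orthogonality is automatic since $A_H$ is symmetric with distinct eigenvalues). The key step is then: because each $A_V$ commutes with $A_H$, every $A_V$ preserves the eigenspaces of $A_H$, so each $E_i$ is also an eigenvector of $A_V$. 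Thus the same frame $\{E_1,E_2\}$ diagonalizes $A_H$ and every admissible $A_V$ simultaneously at $p$.

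The one point requiring a little care is the claim that the frame can be chosen \emph{around} $p$, not merely at $p$. Here I would note that when the two eigenvalues of $A_H$ are distinct at $p$ they remain distinct on a neighborhood (the eigenvalues being continuous, indeed smooth, functions), so the eigenspace distributions are smooth line fields near $p$ and the eigenvector frame extends smoothly; this is the usual argument that the principal directions of a symmetric operator with simple spectrum form a smooth local frame. I would make the mild genericity remark that on the open set where $H$ is not umbilical the frame is globally well-defined up to sign and relabeling.

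I expect the main (and only real) obstacle to be organizing the case distinction cleanly rather than any substantive difficulty: the commutation $[A_H,A_V]=0$ from the Lemma does all the work, and the whole content is the elementary fact that commuting symmetric operators on a $2$-dimensional space are simultaneously diagonalizable, together with the observation that the two cases "$A_H$ scalar'' and "$A_H$ with simple spectrum'' correspond exactly to "$H$ umbilical'' and "a distinguishing eigenframe exists.'' No curvature computation or further appeal to the complex structure is needed beyond what the Lemma already encapsulates.
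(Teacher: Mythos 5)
Your proof is correct and follows exactly the route the paper intends: the paper states this corollary with no written proof, treating it as an immediate consequence of the preceding lemma $[A_H,A_V]=0$, and your argument (umbilical case versus simple spectrum, commuting symmetric operators preserving one-dimensional eigenspaces, openness of the non-umbilical condition to get a smooth local frame) is precisely the standard filling-in of that step. Nothing is missing and no different machinery is used.
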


\begin{proposition}\label{Reduction of codimension for biconservative surfaces}
Assume that $H$ is nowhere an umbilical direction. Then there exists a parallel subbundle $L$ of the normal bundle that contains the image of the
second fundamental form $B$ and has real dimension less or equal to $6$.
\end{proposition}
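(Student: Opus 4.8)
The plan is to build $L$ out of the two natural pieces into which the totally real structure splits the normal bundle. Write $NM^{2}=JTM^{2}\oplus W$, where $W:=(JTM^{2})^{\perp}\cap NM^{2}$. Since $M^{2}$ is totally real, $J$ sends $TM^{2}$ into $NM^{2}$, so $JTM^{2}=\Span\{JE_{1},JE_{2}\}$ has rank $2$; and for $V\in W$ one has $\langle JV,E_{k}\rangle=-\langle V,JE_{k}\rangle=0$ and $\langle JV,JE_{k}\rangle=\langle V,E_{k}\rangle=0$, so $JW\subseteq W$, i.e. $W$ is a complex (rank-$(2n-4)$) subbundle. Because $H$ is nowhere umbilical, Corollary~\ref{Normalizing} provides a frame $\{E_{1},E_{2}\}$ diagonalizing simultaneously $A_{H}$ and every $A_{V}$ with $V\in W$; hence $\langle B(E_{1},E_{2}),V\rangle=\langle A_{V}E_{1},E_{2}\rangle=0$ for all such $V$, forcing $B(E_{1},E_{2})\in JTM^{2}$ and $B^{W}(E_{1},E_{2})=0$. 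Thus the $W$-part of $B$ is diagonal and $\im_{W}B:=\Span\{B^{W}(E_{1},E_{1}),B^{W}(E_{2},E_{2})\}$ has real rank $\le 2$. I then set
$$
L:=JTM^{2}\ \oplus\ \big(\im_{W}B+J(\im_{W}B)\big),
$$
the sum of the rank-$2$ bundle $JTM^{2}$ and a rank-$\le 4$ subbundle of $W$, so $\dimension_{\mathbb R}L\le 6$.

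That $L$ contains the image of $B$ is immediate: $B(E_{1},E_{2})\in JTM^{2}$, while $B(E_{i},E_{i})=B^{JTM}(E_{i},E_{i})+B^{W}(E_{i},E_{i})\in JTM^{2}\oplus\im_{W}B$. To prove $L$ is $\nabla^{\perp}$-parallel I would reduce everything to the single claim that $\nabla^{\perp}_{X}\xi\in L$ for $\xi\in\im_{W}B$. Indeed, using $\overline{\nabla}J=0$ with the Gauss and Weingarten formulas, $\nabla^{\perp}_{X}(JE_{k})=J\nabla_{X}E_{k}+J\big(B^{W}(X,E_{k})\big)$, whose first term lies in $JTM^{2}$ and whose second lies in $J(\im_{W}B)$ by diagonality, so $\nabla^{\perp}(JTM^{2})\subseteq L$ unconditionally; likewise, for $\xi\in\im_{W}B$, the normal part of $\overline{\nabla}_{X}(J\xi)=-JA_{\xi}X+J\nabla^{\perp}_{X}\xi$ has $JA_{\xi}X\in JTM^{2}$ and $J\nabla^{\perp}_{X}\xi\in L$ as soon as $\nabla^{\perp}_{X}\xi\in L$. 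Finally $\nabla^{\perp}_{X}(B^{JTM}(E_{i},E_{i}))\in L$ by the $JTM^{2}$ case, so controlling $\nabla^{\perp}$ on $\im_{W}B$ is the same as controlling $\nabla^{\perp}_{X}B(E_{i},E_{i})$, and the whole problem collapses to this point.

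Equivalently, I would show that $L^{\perp}\cap NM^{2}$ is parallel. This complement has a clean description: $W'\perp L$ lies in $W$ and satisfies $\langle A_{W'}E_{i},E_{i}\rangle=\langle W',B^{W}(E_{i},E_{i})\rangle=0$, so, being diagonal, $A_{W'}=0$; moreover $L^{\perp}$ is $J$-invariant, whence $A_{JW'}=0$ as well. Three structural inputs then combine. First, the Codazzi equation~\eqref{the Codazzi Equation}, rewritten for the shape operator, gives $A_{\nabla^{\perp}_{X}W'}Y-A_{\nabla^{\perp}_{Y}W'}X=-(\overline{R}(X,Y)W')^{\top}$, and a direct use of~\eqref{Tensorial Curvature} shows that on a totally real surface the tangential curvature term vanishes, so $A_{\nabla^{\perp}_{X}W'}$ is symmetric and symmetric in $(X,\cdot)$. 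Second, $\nabla^{\perp}H=0$ together with $H^{W}\in\im_{W}B$ gives $\langle H,W'\rangle\equiv 0$, so $\trace A_{\nabla^{\perp}_{X}W'}=2\langle H,\nabla^{\perp}_{X}W'\rangle=0$. Third, the Ricci equation~\eqref{the Ricci Equation} with $A_{W'}=A_{JW'}=0$ and~\eqref{Tensorial Curvature} forces $\langle R^{\perp}(X,Y)W',V\rangle=0$ for all $V\in\im_{W}B+J(\im_{W}B)$. Feeding the trace-free and Codazzi-symmetry data into these identities, and using $\nabla^{\perp}_{X}(JW')=J\nabla^{\perp}_{X}W'$ (a consequence of $\overline{\nabla}J=0$ and $A_{W'}=0$), should pin the diagonal coefficients $\langle A_{\nabla^{\perp}_{X}W'}E_{i},E_{i}\rangle$ and their $J$-counterparts to zero, i.e. $\nabla^{\perp}_{X}W'\in L^{\perp}$.

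I expect this last step to be the main obstacle. The Codazzi symmetry and the PMC trace condition alone leave a one-parameter family of admissible off-diagonal normal-connection coefficients, and eliminating it genuinely requires both the $J$-invariance of $L^{\perp}$ and the Ricci equation, precisely in the spirit of the reduction technique of \cite{Alencar, Eschenburg-Tribuzy, Ferreira-Tribuzy, dorel}. A final remark on regularity: the non-umbilicity of $H$ makes $\lambda_{1}\neq\lambda_{2}$ and the eigenframe $\{E_{1},E_{2}\}$ (locally) smooth, so $\im_{W}B$ and $L$ are honest smooth subbundles on the open dense set of locally constant rank and extend by parallelism; this is exactly where the hypothesis that $H$ is nowhere an umbilical direction enters.
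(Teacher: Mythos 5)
Your bundle is the paper's bundle in disguise: since $B(E_{1},E_{2})\in JTM^{2}$ and $(JB(E_{i},E_{i}))^{\perp}=JB^{W}(E_{i},E_{i})$, your $L=JTM^{2}\oplus\bigl(\im_{W}B+J(\im_{W}B)\bigr)$ coincides with the paper's $L=\Span\{\im B\cup(J\im B)^{\perp}\cup JTM^{2}\}$, and your containment statement, dimension count, and the reduction of parallelism to showing $\nabla^{\perp}_{X}W'\perp L$ for every $W'\perp L$ (with $A_{W'}=A_{JW'}=0$) all match the paper's scheme. The problem is that the proof stops exactly at the decisive point: you write that the structural identities ``should pin the diagonal coefficients to zero'' and you yourself flag this as ``the main obstacle.'' That step \emph{is} the proposition, so as written there is a genuine gap. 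Moreover your diagnosis of how to close it is off on both counts: no ``one-parameter family'' survives the Codazzi and trace conditions once diagonality is taken into account, and the Ricci equation \eqref{the Ricci Equation} is not needed beyond its earlier role in establishing Corollary \ref{Normalizing}.

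Here is how the step actually closes, with tools you already listed. (i) By your own computation $\nabla^{\perp}(JTM^{2})\subseteq L$, dualized, $\nabla^{\perp}_{E_{k}}W'\perp JTM^{2}$; hence Corollary \ref{Normalizing} makes $A_{\nabla^{\perp}_{E_{k}}W'}$ diagonal in $\{E_{1},E_{2}\}$. The Codazzi equation \eqref{the Codazzi Equation} paired with $W'$, using $A_{W'}=0$ and the vanishing of the curvature term from \eqref{Tensorial Curvature} (because $M^{2}$ is totally real and $W'\perp JTM^{2}$), gives $A_{\nabla^{\perp}_{E_{k}}W'}E_{i}=A_{\nabla^{\perp}_{E_{i}}W'}E_{k}$, so $a_{ijk}=\langle A_{\nabla^{\perp}_{E_{k}}W'}E_{i},E_{j}\rangle$ is totally symmetric; diagonality then kills every $a_{ijk}$ with two distinct indices, and the PMC trace identity $a_{11k}+a_{22k}=2\langle H,\nabla^{\perp}_{E_{k}}W'\rangle=0$ kills $a_{111}=-a_{221}=0$ and $a_{222}=-a_{112}=0$. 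Thus $A_{\nabla^{\perp}_{E_{k}}W'}=0$, i.e.\ $\nabla^{\perp}_{E_{k}}W'\perp\im B$, and together with $\nabla^{\perp}_{E_{k}}W'\perp JTM^{2}$ this gives $\nabla^{\perp}_{E_{k}}W'\perp\im_{W}B$. (ii) Since $JW'$ also lies in $L^{\perp}$ with $A_{JW'}=0$, step (i) applies verbatim to $JW'$, and your identity $\nabla^{\perp}_{X}(JW')=J\nabla^{\perp}_{X}W'$ yields, for $\xi\in\im_{W}B$, $\langle\nabla^{\perp}_{X}W',J\xi\rangle=-\langle\nabla^{\perp}_{X}(JW'),\xi\rangle=0$, so $\nabla^{\perp}_{X}W'\perp J(\im_{W}B)$ as well, and $\nabla^{\perp}_{X}W'\in L^{\perp}$. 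This two-stage argument --- total symmetry plus trace-freeness to handle $\im B$, then replacing $W'$ by $JW'$ to handle the $J$-image --- is precisely the paper's proof (its totally symmetric coefficients $A_{ijk}$ and its computation with $JV$), so the missing step is short algebra rather than a new structural input.
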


\begin{proof}
We define the subbundle $L$ of the normal bundle of $M^{2}$ in the complex space form $N^{n}(c)$ by 
$$
L=\Span\{\im B\cup (J\im B)^{\perp}\cup JTM^{2}\},
$$
where $(J\im B)^{\perp}=\{ (JB(X,Y))^{\perp}:X$, $Y$ tangent vector fields to $M^{2}\}$.

To prove that $L$ is parallel, let $U$ be a section in $L$, and it is sufficient to show that 
$\nabla_{X}^{\perp}U$ is also a section in $L$, for any $X$. This  means that $\langle \nabla_{X}^{\perp}U,V\rangle=0$, for any $V$ normal to $M^{2}$ and orthogonal to $L$; equivalently, $\langle U, \nabla_{X}^{\perp}V\rangle =0$.

Let $V$ be a normal vector field orthogonal to $L$. This means that 
$$
\langle V,B(X,Y)\rangle =\langle V,JB(X,Y)\rangle =\langle V,JX\rangle 
=0,
$$
for any $X$, $Y$ tangent to $M^{2}$.

Consider $\{E_{1},E_{2}\}$ a local orthonormal frame field that diagonalizes simultaneously $A_{H}$ and $A_{V}$ (see Corollary \ref{Normalizing}). We want to prove $\nabla_{E_{k}}^{\perp}V$ is orthogonal to $\im B$, $(J\im B)^{\perp}$, and $JTM^{2}$.

In order to prove this, we first prove that $\nabla_{E_{k}}^{\perp}V$ is orthogonal to $JTM^{2}$.
We have
\begin{eqnarray*}
\langle JE_{j}, \nabla _{E_{k}}^{\perp}V\rangle
&=&
-\langle \nabla _{E_{k}}^{\perp}JE_{j}, V\rangle
\\
&=&
-\langle \overline{\nabla}_{E_{k}} JE_{j},V\rangle -\langle A_{JE_{j}}E_{k},V\rangle
\\
&=&
-\langle J \overline{\nabla}_{E_{k}}E_{j},V\rangle
\\
&=&
-\langle J \nabla_{E_{k}}E_{j},V\rangle -\langle JB(E_{k}, E_{j}),V\rangle 
\\
&=&
0.
\end{eqnarray*} 

In order to prove that 
$\nabla_{E_{k}}^{\perp}V$ is orthogonal to $\im B$ we set
$$
A_{ijk}=-\langle B(E_{i},E_{j}),\nabla_{E_{k}}^{\perp}V\rangle = \langle \nabla_{E_{k}}^{\perp}B(E_{i},E_{j}),V\rangle
$$
and we will prove that $A_{ijk}=0$. Since $B$ is symmetric, we obtain $A_{ijk}=A_{jik}$.

Now, we notice that
\begin{eqnarray*}
\langle (\nabla_{E_{k}}^{\perp}B)(E_{i},E_{j}),V\rangle
&=&
\langle \nabla _{E_{k}}^{\perp} B(E_{i},E_{j}),V\rangle -\langle B(\nabla_{E_{k}}E_{i},E_{j}),V\rangle 
\\
&\ &
-\langle B(E_{i},\nabla_{E_{k}}E_{j}),V\rangle
\\
&=&
\langle \nabla_{E_{k}}^{\perp}B(E_{i},E_{j}),V\rangle
\\
&=&
A_{ijk}.
\end{eqnarray*}

Using the Codazzi Equation (\ref{the Codazzi Equation}) we have 
\begin{eqnarray*}
A_{ijk}
&=&
\langle (\nabla_{E_{k}}^{\perp}B)(E_{i},E_{j}),V\rangle
\\
&=&
\langle (\nabla_{E_{i}}^{\perp}B)(E_{k},E_{j})+(\overline{R}(E_{k},E_{i})E_{j})^{\perp}
,V \rangle
\\
&=&
\langle (\nabla_{E_{i}}^{\perp}B)(E_{k},E_{j}),V\rangle
=
A_{kji}
\\
&=&
\langle (\nabla_{E_{i}}^{\perp}B)(E_{j},E_{k}),V\rangle
\\
&=&
\langle (\nabla_{E_{j}}^{\perp}B)(E_{i},E_{k})+(\overline{R}(E_{j},E_{i})E_{k})^{\perp}
,V \rangle
\\
&=&
\langle (\nabla_{E_{j}}^{\perp}B)(E_{i},E_{k}),V\rangle
=
A_{ikj},
\end{eqnarray*}
that shows that $A_{ijk}=A_{kji}=A_{ikj}$.

Next, since the normal vector field
$\nabla_{E_{k}}^{\perp}V$ is orthogonal to $JTM^{2}$, from Corollary \ref{Normalizing} it follows that the  basis
$\{E_{1},E_{2}\}$ diagonalizes $A_{\nabla_{E_{k}}^{\perp}V}$ as well, and we get 
\begin{eqnarray*}
A_{ijk}&=&-\langle B(E_{i},E_{j}),\nabla_{E_{k}}^{\perp}V\rangle 
=
-\langle (A_{\nabla_{E_{k}}^{\perp}V})E_{i},E_{j}\rangle
=
-\langle \lambda_{i}E_{i}, E_{j}\rangle
\\
&=&
0,
\end{eqnarray*}
for $i \neq j$. Hence, $A_{ijk}=0$ if two indices are different from each other.

Finally, we have 
\begin{eqnarray*}
A_{iii}&=&-\langle B(E_{i},E_{i}),\nabla_{E_{i}}^{\perp}V \rangle
\\
&=&
-\langle 2H,\nabla_{E_{i}}^{\perp}V\rangle +\langle B(E_{j},E_{j}), \nabla_{E_{i}}^{\perp}V\rangle \ \quad (j\neq i)
\\
&=&
2\langle  \nabla_{E_{i}}^{\perp}H, V\rangle -A_{jji}
\\
&=&
0.
\end{eqnarray*}
Thus $A_{ijk}=0$.

Now, if $V$ is normal to $M^{2}$ and orthogonal to $L$, it follows that $JV$ is normal and orthogonal to $L$. Further, we have 
\begin{eqnarray*}
\langle (JB(E_{i},E_{j}))^{\perp}, \nabla_{E_{k}}^{\perp}V\rangle &=& -\langle \overline{\nabla}_{E_{k}}(JB(E_{i},E_{j}))^{\perp}, V \rangle
\\
&=&
 -\langle \overline{\nabla}_{E_{k}}JB(E_{i},E_{j}), V \rangle + \langle \overline{\nabla}_{E_{k}}(JB(E_{i},E_{j}))^{\top}, V \rangle
\\
&=&
\langle JA_{B(E_{i},E_{j})}E_{k},V\rangle - \langle J \nabla_{E_{k}}^{\perp}B(E_{i},E_{j}),V\rangle 
\\
&\ &
+\langle B(E_{k},(JB(E_{i},E_{j}))^{\top}),V\rangle
\\
&=&
\langle \nabla_{E_{k}}^{\perp}B(E_{i},E_{j}),JV \rangle
\\
&=&
0,
\end{eqnarray*}
and we conclude.

Finally, we still need to prove that $L$ has real dimension less or equal to $6$.
\\
Indeed, since $\{ JE_{1},JE_{2}\}$ is a local orthogonal-unit system in $NM^{2}$, we  can consider
$$
\{ JE_{1},JE_{2},V_{1},\ldots, V_{2n-4}\},
$$
a local orthonormal frame field in $NM^{2}$.
We have,
$$
B(E_{1},E_{2})=\alpha JE_{1}+\beta JE_{2}+\gamma_{1}V_{1}+\cdots+\gamma_{2n-4}V_{2n-4},
$$
and then 
\begin{eqnarray*}
\langle B(E_{1},E_{2}), V_{1}\rangle &=& \gamma_{1}
=\langle A_{V_{1}}E_{1},E_{2}\rangle
\\
&=&
0.
\end{eqnarray*}
Therefore, $\gamma_{i}=0$, for any  $i=1, \dots,2n-4$, so 
$B(E_{1},E_{2})=\alpha JE_{1}+\beta JE_{2}$ and $JB(E_{1},E_{2})=-\alpha E_{1}-\beta E_{2}$.
Let $X,Y \in C(TM^{2})$. We have
\begin{eqnarray*}
B(X,Y)&=& B(X^{1}E_{1}+X^{2}E_{2},Y^{1}E_{1}+Y^{2}E_{2})
\\
&=&
X^{1}Y^{1}B(E_{1},E_{1})+(X^{1}Y^{2}+Y^{1}X^{2})B(E_{1},E_{2})+X^{2}Y^{2}B(E_{2},E_{2}).
\end{eqnarray*}
\begin{enumerate}
\item 
As $J$ and $\perp$ are linear, we have 
\begin{eqnarray*}
(JB(X,Y))^{\perp}&=&X^{1}Y^{1}(JB(E_{1},E_{1}))^{\perp}+(X^{1}Y^{2}+X^{2}Y^{1})(JB(E_{1},E_{2}))^{\perp}
\\
&\ &
+X^{2}Y^{2}(JB(E_{2},E_{2}))^{\perp}
\\
&=&
X^{1}Y^{1}(JB(E_{1},E_{1}))^{\perp}+(X^{1}Y^{2}+X^{2}Y^{1})(-\alpha E_{1}-\beta E_{2})^{\perp}
\\
&\ &
+X^{2}Y^{2}(JB(E_{2},E_{2}))^{\perp}
\\
&=&
X^{1}Y^{1}(JB(E_{1},E_{1}))^{\perp}+X^{2}Y^{2}(JB(E_{2},E_{2}))^{\perp}.
\end{eqnarray*}
Thus, $(J\im B)^{\perp}$ is utmost of real dimension equal to $2$.
\item
Next, we consider the normal vector $B(X,Y)+JZ$, and we have
\begin{eqnarray*}
B(X,Y)+JZ &=& X^{1}Y^{1}B(E_{1},E_{1})+(X^{1}Y^{2}+Y^{1}X^{2})B(E_{1},E_{2})+X^{2}Y^{2}B(E_{2},E_{2})
\\
&\ &
+J(Z_{1}E_{1}+Z_{2}E_{2})
\\
&=&
X^{1}Y^{1}B(E_{1},E_{1})+(X^{1}Y^{2}+Y^{1}X^{2})(\alpha JE_{1}+\beta JE_{2})
\\
&\ &
+X^{2}Y^{2}B(E_{2},E_{2})
+Z_{1}JE_{1}+Z_{2}JE_{2}
\\
&=&
X^{1}Y^{1}B(E_{1},E_{1})+(\alpha X^{1}Y^{2}+\alpha Y^{1}X^{2}+Z_{1}) JE_{1}
\\
&\ &
+(\beta X^{1}Y^{2}+\beta Y^{1}X^{2}+Z_{2}) JE_{2} +X^{2}Y^{2}B(E_{2},E_{2}).
\end{eqnarray*}
Then $\Span \{ \im B \cup J(TM^{2}) \} $ is utmost of real dimension equal to $4$.
\end{enumerate}
Therefore, $L$ has real dimension less or equal to $6$.
\end{proof}

\begin{lemma}\label{lemma L}
Assume that $H$ is nowhere an umbilical direction. Denote by $\tilde{L}=L\oplus TM^{2}$, then 
$\tilde{L}$ is parallel with respect to the Levi-Civita connection on the complex space form $N^{n}(c)$ and it is invariant by the curvature tensor $\overline{R}$, i.e., $\overline{R}(\overline{u},\overline{v})\overline{w} \in \tilde{L}$, for all $\overline{u},\overline{v},\overline{w}\in \tilde{L}$.
\end{lemma}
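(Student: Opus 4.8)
The plan is to prove the two assertions separately, treating the $J$-invariance of $\tilde{L}$ as the key intermediate fact that makes the curvature statement immediate.

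First I would establish parallelism. Take an arbitrary section of $\tilde{L}$, written as $\overline{u}=Y+U$ with $Y$ tangent to $M^{2}$ and $U$ a section of $L$, together with a tangent vector field $X$ on $M^{2}$. Applying the Gauss and Weingarten formulas gives
$$
\overline{\nabla}_{X}\overline{u}=\nabla_{X}Y+B(X,Y)-A_{U}X+\nabla_{X}^{\perp}U.
$$
Here $\nabla_{X}Y$ and $-A_{U}X$ are tangent, hence lie in $TM^{2}\subseteq\tilde{L}$; the term $B(X,Y)$ lies in $\im B\subseteq L\subseteq\tilde{L}$; and $\nabla_{X}^{\perp}U\in L\subseteq\tilde{L}$ because $L$ is parallel by Proposition \ref{Reduction of codimension for biconservative surfaces}. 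Thus $\overline{\nabla}_{X}\overline{u}\in\tilde{L}$, proving that $\tilde{L}$ is parallel along $M^{2}$.

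For the curvature invariance, the decisive step will be to verify that $\tilde{L}$ is invariant under $J$, since the ambient curvature \eqref{Tensorial Curvature} expresses $\overline{R}(\overline{u},\overline{v})\overline{w}$ as a linear combination of the five vectors $\overline{u}$, $\overline{v}$, $J\overline{u}$, $J\overline{v}$, $J\overline{w}$. Once $J\tilde{L}\subseteq\tilde{L}$ is known, all five vectors lie in $\tilde{L}$ whenever $\overline{u},\overline{v},\overline{w}\in\tilde{L}$, so every term in \eqref{Tensorial Curvature} belongs to $\tilde{L}$ and the conclusion $\overline{R}(\overline{u},\overline{v})\overline{w}\in\tilde{L}$ follows at once. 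To prove $J$-invariance I would check $J$ on each generating piece of $\tilde{L}$. On $TM^{2}$, total reality gives $J(TM^{2})=JTM^{2}\subseteq L$; on $JTM^{2}$ we have $J(JTM^{2})=-TM^{2}=TM^{2}$; on $\im B$ we split $JB(X,Y)=(JB(X,Y))^{\top}+(JB(X,Y))^{\perp}$, the first summand tangent and the second in $(J\im B)^{\perp}\subseteq L$; and on $(J\im B)^{\perp}$ we use
$$
J\big((JB(X,Y))^{\perp}\big)=-B(X,Y)-J\big((JB(X,Y))^{\top}\big),
$$
where $-B(X,Y)\in\im B$ and $J((JB(X,Y))^{\top})\in JTM^{2}$ by total reality. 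In every case the image remains in $\tilde{L}$.

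The main obstacle, and the only place demanding genuine care, is precisely this $J$-invariance computation: one must track tangential and normal components accurately and exploit both $J^{2}=-\id$ and the totally real hypothesis simultaneously, because the defining pieces of $L$ are themselves built out of $J$ together with orthogonal projection onto the normal bundle. Once $\tilde{L}$ is seen to be $J$-invariant, assembling the five curvature terms from \eqref{Tensorial Curvature} is purely formal and completes the proof.
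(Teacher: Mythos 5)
Your proposal is correct and follows essentially the same route as the paper: parallelism of $\tilde{L}$ via the Gauss and Weingarten formulas combined with the parallelism of $L$, then $J$-invariance checked generator by generator (with the same identity $J\big((JB(X,Y))^{\perp}\big)=-B(X,Y)-J\big((JB(X,Y))^{\top}\big)$ for the delicate case), and finally curvature invariance read off directly from the expression \eqref{Tensorial Curvature}. There is nothing to add.
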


\begin{proof}
From Proposition \ref{Reduction of codimension for biconservative surfaces}, it is easy to see that $\tilde{L}$ is parallel with respect to the Levi-Civita connection 
$\overline{\nabla}$ on the complex space form $N^{n}(c)$. Indeed, if 
$\sigma \in C(L) \subset C(\tilde{L})$, we have 
$$
\overline{\nabla}_{X}\sigma =\nabla_{X}^{\perp}\sigma-A_{\sigma}X
$$
as $\nabla_{X}^{\perp}\sigma \in C(L)$ and $A_{\sigma}X \in C(TM^{2})$, we obtain $\overline{\nabla}_{X}\sigma \in C(\tilde{L})$.
\\
Also, if $Y \in C(TM^{2})\subset C(\tilde{L})$, we have 
$$
\overline{\nabla}_{X}Y=\nabla_{X}Y+B(X,Y),
$$
and since $\nabla_{X}Y\in C(TM^{2})$ and $B(X,Y)\in C(L)$, we get $\overline{\nabla}_{X}Y \in C(\tilde{L})$.
\\
Now, in order to show that 
$\tilde{L}$ is invariant by the curvature tensor 
$\overline{R}$, we need first to prove that $J\tilde{L}\subset \tilde{L}$, which implies $J\tilde{L}= \tilde{L}$. 
\begin{enumerate}
\item \label{1'} Let $X\in TM^{2}\subset \tilde{L}$. By the definition of $L$, we obtain $JX\in L\subset \tilde{L}$.
\item Let $B(X,Y)\in L\subset\tilde{L}$. We have
$$
JB(X,Y)=(JB(X,Y))^{\top}+(JB(X,Y))^{\perp},
$$
and since $(JB(X,Y))^{\top}\in TM^{2}\subset \tilde{L}$ and  
$(JB(X,Y))^{\perp}\in L\subset \tilde{L}$, we get $JB(X,Y)\in \tilde{L}$, for all $X,Y\in TM^{2}$.
\item Let $(JB(X,Y))^{\perp} \in L\subset \tilde{L}$, 
$$
J((JB(X,Y))^{\perp})
=J(JB(X,Y)-(JB(X,Y))^{\top}).
$$
Take $Z=(JB(X,Y))^{\top}$, then 
$$
J((JB(X,Y))^{\perp})
=-B(X,Y)-JZ,
$$
and since $B(X,Y)\in L \subset \tilde{L}$ and $JZ\in L \subset\tilde{L}$, we get $J((JB(X,Y))^{\perp})\in \tilde{L}$, for all $X,Y\in TM^{2}$.
\item Let $JX\in L \subset\tilde{L}$, 
$$
J(JX)=-X.
$$
thus $J(JX)\in \tilde{L}$, for all $X\in TM^{2}$.
\end{enumerate} 
Therefore, $J\tilde{L}\subset\tilde{L}$ and so $J\tilde{L}= \tilde{L}$.
Now, we have 
\begin{eqnarray*}
\overline{R}(\overline{u},\overline{v})\overline{w}=\frac{c}{4}\Big \{ \langle\overline{v}, \overline{w}\rangle \overline{u}- \langle\overline{u}, \overline{w}\rangle \overline{v}+ \langle J\overline{v},  \overline{w}\rangle J \overline{u}-\langle J\overline{u}, \overline{w}\rangle J \overline{v} +2 \langle J \overline{v}, \overline{u}\rangle J\overline{w} \Big\},
\end{eqnarray*}
hence $\overline{R}(\overline{u},\overline{v})\overline{w}\in\tilde{L}$ for all $\overline{u},\overline{v},\overline{w}\in \tilde{L}$.
\end{proof} 
Now we can state the main result of this section:

\begin{theorem}\label{Reduction theorem}
Let $M^{2}$ be a non pseudo-umbilical PMC totally real surface in a complex space form $N^{n}(c)$, $c\neq 0$, $n\geq 4$. Then there exists a totally geodesic complex submanifold $N^{4}(c)\subset N^{n}(c)$ such that $M^{2}\subset N^{4}(c)$.  
\end{theorem}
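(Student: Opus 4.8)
The plan is to show that the distribution $\tilde L$ constructed in Lemma~\ref{lemma L} serves as the tangent distribution of the desired totally geodesic complex submanifold, and then to invoke a standard reduction-of-codimension theorem. The first point to address is that Proposition~\ref{Reduction of codimension for biconservative surfaces} and Lemma~\ref{lemma L} require $H$ to be \emph{nowhere} an umbilical direction, whereas the statement only assumes that $M^2$ is \emph{not} pseudo-umbilical. I would bridge this gap using the holomorphicity, recalled in the proof of Theorem~\ref{thm to be generalised}, of the $(2,0)$-form $\langle A_H\partial_z,\partial_z\rangle$ (see \cite{Nistor1}): its zeros are exactly the umbilical points of $A_H$, so on a connected $M^2$ this form is either identically zero (the pseudo-umbilical case, excluded by hypothesis) or has only isolated zeros. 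Hence the set $W$ of points where $H$ is not an umbilical direction is open, dense, and, since removing isolated points from a connected surface leaves it connected, connected.

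On $W$ the hypotheses of Proposition~\ref{Reduction of codimension for biconservative surfaces} and Lemma~\ref{lemma L} hold, so $\tilde L=L\oplus TM^2$ is a subbundle of $TN^n(c)|_W$ that contains $TM^2$ and $\im B$, is parallel with respect to $\overline{\nabla}$, is invariant under $\overline{R}$, and satisfies $J\tilde L=\tilde L$, with $\dim_{\mathbb{R}}\tilde L\le 8$. Because $\tilde L$ is parallel and $W$ is connected, parallel transport shows that its rank is constant on $W$; thus $\tilde L$ is a genuine constant-rank parallel subbundle, which is what the reduction theorem needs.

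With $\tilde L$ in hand I would apply the classical reduction-of-codimension theorem for submanifolds of space forms, as developed in \cite{Alencar, Eschenburg-Tribuzy, Ferreira-Tribuzy}: a parallel, $\overline{R}$-invariant subbundle of $TN^n(c)$ containing the tangent spaces and the image of the second fundamental form is the tangent distribution of a totally geodesic submanifold $P\subset N^n(c)$ with $T_pP=\tilde L_p$, and $W\subset P$. Since $J\tilde L=\tilde L$, the submanifold $P$ is $J$-invariant, hence complex and totally geodesic, so it is itself a complex space form $N^k(c)$ with $k=\tfrac12\dim_{\mathbb{R}}\tilde L\le 4$. Fixing a totally geodesic inclusion $N^k(c)\subset N^4(c)\subset N^n(c)$ and using that $W$ is dense in the connected surface $M^2$ to pass from $W$ to its closure, I would conclude that $M^2\subset N^4(c)$.

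The main obstacle I expect is precisely the handling of the umbilical set: one must be confident that these points are isolated, so that $W$ is connected and $\tilde L$ has constant rank, and that the totally geodesic submanifold obtained over $W$ is closed, so that the inclusion extends across the removed points by continuity. The verification that $P$ is complex, rather than merely totally geodesic, rests entirely on the identity $J\tilde L=\tilde L$ established in Lemma~\ref{lemma L}, which is what upgrades a bare reduction of the ambient dimension to a reduction onto a complex space form $N^4(c)$.
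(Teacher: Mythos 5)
Your proposal is correct and follows essentially the same route as the paper's proof: the same parallel, $J$-invariant, curvature-invariant subbundle $\tilde L$ fed into the Eschenburg--Tribuzy/Ferreira--Tribuzy reduction theorems, with the pseudo-umbilical points handled as isolated zeros of the holomorphic form $\langle A_{H}\partial_{z},\partial_{z}\rangle$ and the final passage from the dense set $W$ to all of $M^{2}$ via completeness of $N^{4}(c)$. The only cosmetic differences are that you treat the two cases (no pseudo-umbilical points versus isolated ones) uniformly and state explicitly the constant-rank observation for $\tilde L$, both of which the paper leaves implicit.
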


\begin{proof}
In the first case, we assume that $M^{2}$ is nowhere pseudo-umbilical. We can apply \cite [Theorem 2]{Eschenburg-Tribuzy} and \cite[Theorem 1]{Ferreira-Tribuzy} to the subbundle $\tilde{L}$ as defined in Lemma \ref{lemma L} and  Proposition \ref{Reduction of codimension for biconservative surfaces} to conclude that there exists a totally geodesic submanifold $N'$ of $N^{n}(c)$ such that $M\subset N'$ and 
$\tilde{L}_{p}=T_{p}N'$ for all $p\in M^{2}$. Since $J\tilde{L}= \tilde{L}$, $N'$ is a complex space form  $N'=N^{4}(c)$ (see \cite{Chen-Nagano1, Chen-Nagano2}).

In the second case, assume that $M^{2}$ admits some pseudo-umbilical points and denote by $W$ the set of all non pseudo-umbilical points of $M^{2}$. As $M^{2}\backslash W$ has no accumulation points, the subset $W$ is open, dense and connected. 

In order to prove that $W$ is connected, we show that $W$ is path-connected. Indeed, let $p, q\in W$, thus, $p,q\in M^{2}$ and there exists a path $\gamma$ in $M^{2}$ that joins 
$p$ and $q$. If the path is already in $W$, we conclude directly. Assume that $\gamma$ passes through at least one (but finite number of) pseudo-umbilical point. Denote such a point by $z$. Since pseudo-umbilical points are isolated, we choose a neighborhood of $z$ containing no other pseudo-umbilical point. In this neighborhood we can smoothly modify the curve $\gamma$ to avoid the point $z$. In this way, we obtain a new path that joins $p$ and $q$ and lies in $W$.

We apply the same argument as in the first case to $W$ to conclude that $W\subset N^{4}(c)$. Then, by a standard argument, we conclude that the whole $M^{2}$ lies in that $N^{4}(c)$. More precisely, let $z\in M^{2}\backslash W$. Since $z$ is isolated, there exists a sequence $\{ p_{n} \}_{n\in \mathbb {N}^{*}}$ that converges to $z$ in $M^{2}$, $p_{n}\neq z$,  and $p_{n}$ belongs to the neighborhood of $z$  that isolates $z$ from the other points of $M^{2}\backslash W$. As $\{p_{n}\}_{n\in \mathbb{N}^{*}}$ converges to $z$ in $M^{2}$, it follows that 
$\{ p_{n} \}_{n\in \mathbb{N}^{*}}$ is a Cauchy sequence in $M^{2}$, or in $W$, and from here it follows that $\{ p_{n} \}_{n\in \mathbb {N}^{*}}$ is a Cauchy sequence also in $N^{4}(c)$. But $N^{4}(c)$ is complete, so $\{p_{n}\}_{n\in \mathbb{N}^{*}}$ converges to some point $z'\in N^{4}(c)$ in $N^{4}(c)$. However, as $\{p_{n}\}_{n\in \mathbb{N}^{*}}$ converges to $z$ in $N^{n}(c)$ and $\{p_{n}\}_{n\in \mathbb{N}^{*}}$ converges to $z'$ in $N^{n}(c)$, we get $z=z'$, thus $z\in N^{4}(c)$.
\end{proof}

\begin{remark}
When $M^{2}$ is pseudo-umbilical and a topological sphere, then the situation is different. First, we recall that if $M^{2}$ is a topological sphere, CMC and biconservative in an arbitrary Riemannian manifold, then it is pseudo-umbilical (see \cite[Corollary 4.3]{M.O.R.}). Now, according to the result in \cite{BO}, when $M^{2}$ is a PMC totally real surface in a complex space form $N^{n}(c)$, $c\neq 0$, and $M^{2}$ is a topological sphere, then there exists a  totally geodesic totally real submanifold $N^{'}$ such that $M^{2}\subset N^{'}$. We note that the technique used in \cite{BO} is completely different from that used here.
\end{remark}

We can improve the above result and reduce the codimension even more, under a slightly stronger assumption. 

\begin{theorem}
Let $M^{2}$ be a non pseudo-umbilical PMC totally real surface in a complex space form $N^{n}(c)$, $c\neq 0$. If $H \in C(JTM^{2})$, then there exists a totally geodesic complex submanifold $N^{2}(c)\subset N^{n}(c)$ such that $M^{2}\subset N^{2}(c)$.  
\end{theorem}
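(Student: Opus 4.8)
The plan is to mirror the proof of Theorem \ref{Reduction theorem}: I would again produce a parallel, $J$-invariant and $\overline{R}$-invariant subbundle $\tilde{L}=TM^{2}\oplus L$ containing the image of $B$, and then integrate it via the reduction results of \cite{Eschenburg-Tribuzy, Ferreira-Tribuzy}. The only difference, and the whole gain, is that the extra hypothesis $H\in C(JTM^{2})$ will force $\im B\subseteq JTM^{2}$, so that the subbundle $L$ of Proposition \ref{Reduction of codimension for biconservative surfaces} collapses from real dimension $6$ down to $L=JTM^{2}$, of real dimension $2$. Then $\tilde{L}=TM^{2}\oplus JTM^{2}$ has real dimension $4$ and, being $J$-invariant, it integrates to a complex totally geodesic $N^{2}(c)$.

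First I would unpack the hypothesis. Since $H\in C(JTM^{2})$ we may write $H=-JT$, so that $N=(JH)^{\perp}=0$ and $T=JH$ is tangent. Recalling that a PMC totally real surface satisfies $\nabla T=A_{N}$, the vanishing of $N$ gives $\nabla T=0$, i.e. $T$ is parallel; moreover $|T|^{2}=|H|^{2}$ is a nonzero constant because $M^{2}$ is PMC, so $T$ is nowhere vanishing (incidentally this already forces $M^{2}$ to be flat, though I will not need it).

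The key step is the identity obtained by differentiating $H=-JT$. Using that $J$ is parallel on $N^{n}(c)$ and that $\nabla T=0$,
$$
\overline{\nabla}_{X}H=-J\overline{\nabla}_{X}T=-J\big(\nabla_{X}T+B(X,T)\big)=-JB(X,T),
$$
while the PMC condition gives $\overline{\nabla}_{X}H=-A_{H}X$. Hence $A_{H}X=JB(X,T)$ for every $X$ tangent to $M^{2}$. As the left-hand side is tangent, $JB(X,T)$ is tangent, which means $B(X,T)\in JTM^{2}$ for all $X$. Taking $E_{1}=T/|T|$ and $E_{2}\perp E_{1}$, this yields $B(E_{1},E_{1}),B(E_{1},E_{2})\in JTM^{2}$, and since $B(E_{2},E_{2})=2H-B(E_{1},E_{1})$ with $H\in JTM^{2}$, also $B(E_{2},E_{2})\in JTM^{2}$. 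Therefore $\im B\subseteq JTM^{2}$.

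Finally I would assemble the reduction. Because $\im B\subseteq JTM^{2}$ we get $(J\im B)^{\perp}=0$, so the subbundle of Proposition \ref{Reduction of codimension for biconservative surfaces} is exactly $L=JTM^{2}$ and $\tilde{L}=TM^{2}\oplus JTM^{2}$ has rank $4$. The Gauss and Weingarten formulas show $\tilde{L}$ is parallel for $\overline{\nabla}$, just as in Lemma \ref{lemma L} (for $Y$ tangent, $\overline{\nabla}_{X}Y=\nabla_{X}Y+B(X,Y)\in\tilde{L}$; for $JZ\in JTM^{2}$, $\overline{\nabla}_{X}(JZ)=J\nabla_{X}Z+JB(X,Z)\in JTM^{2}\oplus TM^{2}$), it is $J$-invariant, and the explicit form of $\overline{R}$ in \eqref{Tensorial Curvature} shows it is $\overline{R}$-invariant. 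Applying \cite[Theorem 2]{Eschenburg-Tribuzy} and \cite[Theorem 1]{Ferreira-Tribuzy} exactly as before gives a totally geodesic $N'$ with $T_{p}N'=\tilde{L}_{p}$; since $J\tilde{L}=\tilde{L}$, $N'$ is a complex space form $N^{2}(c)$ (see \cite{Chen-Nagano1, Chen-Nagano2}), and $M^{2}\subset N^{2}(c)$. The entire difficulty is concentrated in the identity $A_{H}X=JB(X,T)$; once it is in hand, the inclusion $\im B\subseteq JTM^{2}$ and the drop in the rank of $L$ are immediate, and the rest is identical to Theorem \ref{Reduction theorem}. The only points requiring care are that $T$ is nowhere zero (so that $B(\cdot,E_{1})\in JTM^{2}$ genuinely controls a full tangent direction) and, if one allows isolated pseudo-umbilical points, the density argument of Theorem \ref{Reduction theorem}; but since the identity is pointwise and unconditional, $\tilde{L}$ is globally defined of constant rank $4$ and the reduction applies directly.
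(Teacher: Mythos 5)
Your proof is correct, but it reaches the key inclusion $\im B\subset J(TM^{2})$ by a genuinely different route than the paper. The paper works with the frame $\{E_{1},E_{2}\}$ that simultaneously diagonalizes $A_{H}$ and $A_{V}$ (which is exactly where the non pseudo-umbilical hypothesis enters, via Corollary \ref{Normalizing}), derives $\langle JB(E_{i},E_{j}),V\rangle=\langle\nabla^{\perp}_{E_{i}}JE_{j},V\rangle$, expands the PMC condition $\nabla^{\perp}H=0$ in the basis $\{JE_{1},JE_{2}\}$ to get $\langle H,JE_{1}\rangle\langle JB(E_{1},E_{1}),V\rangle=0$, and finishes with a two-case analysis: either $B(E_{1},E_{1})\in J(T_{p}M^{2})$ directly, or $\langle H,JE_{1}\rangle=0$ near $p$, in which case $JE_{2}=-H/\vert H\vert$ and a second Weingarten computation gives $\langle JB(E_{2},E_{2}),V\rangle=0$. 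You instead invoke the Simons-type section's identity $\nabla T=A_{N}$ for PMC totally real surfaces: since $H\in C(JTM^{2})$ forces $N=0$, you get $\nabla T=0$, and differentiating $H=-JT$ yields the pointwise identity $A_{H}X=JB(X,T)$, whence $B(X,T)\in J(TM^{2})$ for every tangent $X$ and then $\im B\subset J(TM^{2})$ via $B(E_{2},E_{2})=2H-B(E_{1},E_{1})$ with $E_{1}=T/\vert T\vert$ (well defined since $\vert T\vert=\vert H\vert\neq0$). Your argument is shorter, avoids any case analysis, and, as you observe, is unconditional: it never uses the diagonalizing frame, so the non pseudo-umbilical hypothesis and the density argument for isolated pseudo-umbilical points become superfluous, i.e., you actually prove the statement with that hypothesis dropped. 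What the paper's route buys is that it stays entirely within the machinery already built for Theorem \ref{Reduction theorem} (the subbundle $L$ of Proposition \ref{Reduction of codimension for biconservative surfaces} and its frame), never appealing to the earlier theorem $\nabla T=A_{N}$. The final assembly in both proofs is the same: $\tilde{L}=TM^{2}\oplus JTM^{2}$ is parallel, $J$-invariant and $\overline{R}$-invariant of rank $4$, the reduction results of \cite{Eschenburg-Tribuzy,Ferreira-Tribuzy} produce a totally geodesic $N'$ with $T_{p}N'=\tilde{L}_{p}$, and $J$-invariance identifies $N'$ as a complex $N^{2}(c)$ by \cite{Chen-Nagano1,Chen-Nagano2}.
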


\begin{proof}
We will assume that $M^{2}$ is nowhere pseudo-umbilical, otherwise we follow the argument of the second part of the proof of  Theorem \ref{Reduction theorem}. Consider $\{E_{1},E_{2}\}$ a local orthonormal frame field that diagonalizes simultaneously $A_{H}$ and $A_{V}$.

In the proof of Proposition \ref{Reduction of codimension for biconservative surfaces} we have seen that $B(E_{1},E_{2})\in C(JTM^{2})$. We will prove that 
$$
B(E_{1},E_{1}), B(E_{2},E_{2})\in C(J(TM^{2})).
$$
Thus, 
$$
\im B \subset J(TM^{2}), \quad J\im B \subset TM^{2}
$$
and so $L=J(TM^{2})$.

Indeed, let $V$ orthogonal to $J(TM^{2})$ and normal to $M^{2}$. Then $V\perp H$. We have 
$\langle V,JE_{i}\rangle =0$, so 
$\langle JV,E_{i}\rangle =0$ and therefore $JV$ is normal to $M^{2}$. And since $\langle JV,JE_{i}\rangle =\langle V,E_{i}\rangle=0$, we also get $JV\perp JTM^{2}$.

We have 
\begin{eqnarray}\label{1st eq}
\overline{\nabla}_{E_{i}}JE_{j}=-A_{JE_{j}}E_{i}+\nabla^{\perp}_{E_{i}}JE_{j},
\end{eqnarray}
on the other hand,
\begin{eqnarray}\label{2nd eq}
\overline{\nabla}_{E_{i}}JE_{j}=J\overline{\nabla}_{E_{i}}E_{j}= J\nabla_{E_{i}}E_{j}+JB(E_{i},E_{j}).
\end{eqnarray}
Taking the inner product of Equations (\ref{1st eq}) and (\ref{2nd eq}) with $V$, we obtain
\begin{eqnarray}\label{inner product withV}
\langle JB(E_{i},E_{j}),V\rangle =\langle \nabla^{\perp}_{E_{i}}JE_{j}, V  \rangle , \quad \forall i,j=1,2.
\end{eqnarray}

Now, as $H\in J(TM^{2})$, we obtain 
$$
H=\langle H,JE_{1}\rangle JE_{1}+ \langle H,JE_{2} \rangle JE_{2}.
$$
And, since $M^{2}$ is PMC, we get
\begin{eqnarray}\label{nabla-perp-H}
0=\nabla^{\perp}_{X}H &=&
X(\langle H,JE_{1}\rangle)JE_{1}+\langle H,JE_{1}\rangle \nabla_{X}^{\perp}JE_{1}
\nonumber
\\
&\ &
+X(\langle H,JE_{2}\rangle)JE_{2}+\langle H,JE_{2}\rangle \nabla_{X}^{\perp}JE_{2}.
\end{eqnarray}
Taking the inner product of Equation (\ref{nabla-perp-H}) with $V$, we obtain 
\begin{eqnarray}
\langle H,JE_{1}\rangle \langle \nabla_{X}^{\perp}JE_{1},V\rangle +\langle H,JE_{2}\rangle \langle \nabla_{X}^{\perp} JE_{2},V\rangle =0.
\end{eqnarray}
Using Equation (\ref{inner product withV}) and taking $X=E_{1}$, as  $JB(E_{1},E_{2})$ is tangent, we get
\begin{eqnarray*}
\langle H,JE_{1}\rangle \langle JB(E_{1},E_{1}),V\rangle &=&-\langle H,JE_{2}\rangle \langle \nabla_{E_{1}}^{\perp} JE_{2},V\rangle 
\\
&=&
\langle H,JE_{2}\rangle \langle JB(E_{1},E_{2}),V\rangle
\\
&=&
0.
\end{eqnarray*}
Hence, $\langle H,JE_{1}\rangle=0$ or $\langle JB(E_{1},E_{1}),V\rangle=0$.

Let $p\in M^{2}$. If at the point $p$ we have $\langle JB(E_{1},E_{1}),V\rangle =0$, then $\langle B(E_{1},E_{1}),JV\rangle=0$, so $\langle B(E_{1},E_{1}),U\rangle =0$ for all $U$ normal to $M^{2}$ and $U\perp J(T_{p}M^{2})$. Therefore, $B(E_{1},E_{1})\in J(T_{p}M^{2})$ and, as $H\in J(TM^{2})$, we also get $B(E_{2},E_{2})\in J(T_{p}M^{2})$.

Now, assume that at $p$, $\langle JB(E_{1},E_{1}),V\rangle \neq 0$. Thus, $\langle H,JE_{1}\rangle = 0$ around $p$. Then $E_{2}=JH/\vert H\vert$ and so $JE_{2}=-H/\vert H \vert$. Hence,
\begin{eqnarray}\label{eq1'}
\overline{\nabla}_{E_{2}}JE_{2}=-A_{JE_{2}}E_{2}+\nabla^{\perp}_{E_{2}}(\tfrac{-H}{\vert H \vert}).
\end{eqnarray}
On the other hand,
\begin{eqnarray}\label{eq2'}
\overline{\nabla}_{E_{2}}JE_{2}=J\overline{\nabla}_{E_{2}}E_{2}= J\nabla_{E_{2}}E_{2}+JB(E_{2},E_{2}).
\end{eqnarray}
Taking the inner product of Equations (\ref{eq1'}) and (\ref{eq2'}) with $V$, we obtain 
$$
\langle JB(E_{2},E_{2}), V \rangle =0,
$$
for any $V$ normal to $M^{2}$ and orthogonal to $JTM^{2}$.
Thus, $\langle B(E_{2},E_{2}), JV \rangle =0$, then $\langle B(E_{2},E_{2}),U\rangle =0$ for all $U$ normal to $M^{2}$ and orthogonal to $JTM^{2}$. Therefore, $B(E_{2},E_{2})\in J(TM^{2})$ and, as the mean curvature vector field $H\in J(TM^{2})$, we get that $B(E_{1},E_{1})\in J(TM^{2})$.
\end{proof}

\section{Further studies}

In this section, we introduce examples where we use the Segre embedding (see for example \cite{Chen-Segre, Segre}) to show the existence of biconservative CMC submanifolds of the complex projective space which are non-PMC. This result shows that when the dimension of a biconservative submanifold is greater than two, the situation is less rigid and one can expect to find many interesting examples other than the totally real ones. Moreover, from the above examples we determine which of them are proper-biharmonic.

\begin{theorem}
Let $\gamma$ be a curve of non-zero curvature $\kappa$ in the complex projective space 
$\mathbb{C}P^1(4)$ of complex dimension $1$. Then, we have
\begin{enumerate}
\item via the Segre embedding of $\mathbb{C}P^{1}(4)\times \mathbb{C}P^{q}(4)$ into 
$\mathbb{C}P^{1+2q}(4)$, the product $M^{1+2q}=\gamma \times \mathbb{C}P^q (4)$ is a  biconservative submanifold of 
$\mathbb{C}P^{1+2q}(4)$ 
if and only if $\kappa=constant$; in this case, $M^{1+2q}$ is CMC non-PMC, and moreover, it is not totally real;
\item $M^{1+2q}$ is a  proper-biharmonic submanifold of 
$\mathbb{C}P^{1+2q}(4)$ if and only if 
$\kappa^{2}=4$, i.e., $\gamma$ is proper-biharmonic in $\mathbb{C}P^{1}(4)$.
\end{enumerate}  
\end{theorem}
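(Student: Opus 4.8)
The plan is to realize $M^{1+2q}$ as a composition of two isometric immersions and to read off its extrinsic geometry factor by factor. I would write $\gamma$ with Frenet frame $\{e_0,\nu\}$ in $\mathbb{C}P^1(4)$, where $e_0$ is the unit tangent and $\nu$ the unit normal, so that $\nabla_{e_0}e_0=\kappa\nu$ and $\nabla_{e_0}\nu=-\kappa e_0$, and complete to a frame $\{e_0,e_1,\dots,e_{2q}\}$ of $TM$ with $\{e_j\}_{j\ge 1}$ orthonormal and tangent to the $\mathbb{C}P^q(4)$ factor. I would use three standard facts about the Segre embedding $S$: it is a holomorphic (hence minimal) isometric immersion, so $J$ commutes with its second fundamental form $h_S$; each factor slice $\mathbb{C}P^1(4)\times\{*\}$ and $\{*\}\times\mathbb{C}P^q(4)$ is totally geodesic in $\mathbb{C}P^{1+2q}(4)$, so $h_S$ pairs only a first-factor vector with a second-factor vector; and $h_S$ is parallel. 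The composition formula for second fundamental forms then gives $B(e_0,e_0)=\kappa\nu$, $B(e_i,e_j)=0$ for $i,j\ge 1$, and $B(e_0,e_j)=h_S(e_0,e_j)$. In particular $H=\tfrac{\kappa}{1+2q}\nu$, so $|H|=\kappa/(1+2q)$, and $A_H=\mathrm{diag}(\kappa^2/(1+2q),0,\dots,0)$, which for $q\ge 1$ is never pseudo-umbilical. Since $\mathbb{C}P^1(4)$ is a complex curve, $Je_0=\pm\nu$, whence $JH$ is tangent, $T=JH$, and $JT=-H$ is normal, so $(JT)^{\top}=0$; the $\mathbb{C}P^q(4)$ directions are $J$-invariant, so $M$ is not totally real.

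For part (1), I would apply the biconservativity criterion of Proposition~\ref{Biconservativity Cdts}(3), namely $4\trace A_{\nabla^{\perp}_{\cdot}H}(\cdot)+m\,\grad|H|^2+4\trace(\overline{R}(\cdot,H)\cdot)^{\top}=0$ with $m=1+2q$. The curvature term vanishes because, exactly as in the proof of Theorem~\ref{PMC JT}, $\trace(\overline{R}(\cdot,H)\cdot)^{\top}=\tfrac{3c}{4}(JT)^{\top}=0$. Differentiating along $\gamma$ gives $\nabla^{\perp}_{e_0}H=\tfrac{\kappa'}{1+2q}\nu$, hence $A_{\nabla^{\perp}_{e_0}H}e_0=\tfrac{\kappa\kappa'}{1+2q}e_0$, while along the $\mathbb{C}P^q(4)$ directions $\nabla^{\perp}_{e_j}H=\tfrac{\kappa}{1+2q}h_S(e_j,\nu)$ and $\sum_j A_{\nabla^{\perp}_{e_j}H}e_j=\tfrac{\kappa}{1+2q}\sum_j\langle h_S(e_0,e_j),h_S(e_j,\nu)\rangle e_0$, which vanishes since $h_S(e_j,\nu)=h_S(e_j,Je_0)=Jh_S(e_0,e_j)$ is orthogonal to $h_S(e_0,e_j)$. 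The condition collapses to $\tfrac{6\kappa\kappa'}{1+2q}e_0=0$, i.e. $\kappa'=0$. Thus $M$ is biconservative iff $\kappa$ is constant; it is then CMC, it is non-PMC since $\nabla^{\perp}_{e_j}H=\tfrac{\kappa}{1+2q}h_S(e_j,\nu)\ne 0$ (the Segre embedding is not totally geodesic), and it is not totally real by the above.

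For part (2), since $M$ is CMC and biconservative the tangential part of the bitension field already vanishes, so I would reduce biharmonicity to the normal equation $\Delta^{\perp}H+\trace B(\cdot,A_H\cdot)+\trace(\overline{R}(\cdot,H)\cdot)^{\perp}=0$. The middle term is $\tfrac{\kappa^2}{1+2q}B(e_0,e_0)=\tfrac{\kappa^3}{1+2q}\nu$, and the curvature term is $\trace(\overline{R}(\cdot,H)\cdot)^{\perp}=-mH+3JT=-(2q+4)H=-\tfrac{(2q+4)\kappa}{1+2q}\nu$. For the rough Laplacian, constancy of $\kappa$ kills the $e_0$-contribution, and using parallelism of $h_S$ with a geodesic frame I would show $\sum_j\nabla^{\perp}_{e_j}h_S(e_j,\nu)=-\big(\sum_j|h_S(e_j,\nu)|^2\big)\nu$, so $\Delta^{\perp}H=\tfrac{\kappa}{1+2q}\big(\sum_j|h_S(e_j,\nu)|^2\big)\nu=\tfrac{2q\kappa}{1+2q}\nu$, where I invoke the Segre norm identity $|h_S(e_0,e_j)|^2=1$, hence $\sum_j|h_S(e_0,e_j)|^2=2q$. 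Substituting, the normal equation becomes $\tfrac{\kappa}{1+2q}\big(2q+\kappa^2-(2q+4)\big)\nu=\tfrac{\kappa}{1+2q}(\kappa^2-4)\nu=0$, i.e. $\kappa^2=4$; as $\kappa^2=4$ is precisely the proper-biharmonicity condition for a curve of constant geodesic curvature in the constant-curvature-$4$ surface $\mathbb{C}P^1(4)$, the equivalence follows.

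The main obstacle will be the bookkeeping of the two nested normal bundles in the computation of $\Delta^{\perp}H$: the vector $h_S(e_j,\nu)$ lives in the normal bundle of the Segre image, which is only a subbundle of the normal bundle of $M$, so passing from the Segre normal connection, where parallelism is clean, to the actual normal connection $\nabla^{\perp}$ of $M$ produces exactly the Weingarten correction $-|h_S(e_j,\nu)|^2\nu$ that feeds the $\Delta^{\perp}H$ term. Pinning down the constant $\sum_j|h_S(e_0,e_j)|^2=2q$ from the Segre geometry normalized to $c=4$ is the one step where an external computation is genuinely required, and it is what fixes the value $\kappa^2=4$.
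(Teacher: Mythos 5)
Your proposal is correct, and its overall skeleton coincides with the paper's proof: the same factorization $\phi=\mathbf{j}\circ\mathbf{i}$ through the Segre embedding, the same adapted frame, the same identities $H=\tfrac{\kappa}{1+2q}\nu$, $A_H E_1=\tfrac{\kappa^2}{1+2q}E_1$, $A_HE_l=0$, $\nabla^{\perp}_{E_l}H=\tfrac{\kappa}{1+2q}B^{\mathbf j}(E_l,\nu)\neq 0$, the same reduction of biconservativity to $\tfrac{6\kappa\kappa'}{1+2q}E_1=0$, and the same normal biharmonic equation yielding $2q+\kappa^2-(2q+4)=0$. The one genuinely different ingredient is how you establish the key orthogonality $\langle h_S(e_0,e_j),h_S(e_j,\nu)\rangle=0$ (the paper's equation for the vanishing of $A^{\phi}_{\nabla^{\perp}_{E_l}H}E_l$, and the coefficient $1$ in the Weingarten correction): the paper derives it from the Gauss equation of the Segre embedding together with explicit computations of $R^{\mathbb{C}P^{1+2q}}(E_l,\nu)E_l=-\nu$ and $R^{\mathbb{C}P^1\times\mathbb{C}P^q}(E_l,\nu)E_l=0$, whereas you get it in one line from holomorphicity of the Segre map, $h_S(e_j,\nu)=\pm h_S(e_j,Je_0)=\pm Jh_S(e_j,e_0)$, plus skew-symmetry of $J$. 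Your route is shorter and more conceptual, and it also unifies the norm identity $|h_S(e_j,\nu)|=|h_S(e_j,e_0)|=1$; the paper's route stays within purely metric submanifold equations (Gauss, Codazzi, Ricci) and never needs the Kähler compatibility $h_S(X,JY)=Jh_S(X,Y)$, only the weaker facts that $h_S$ vanishes on same-factor pairs, is parallel, and has unit norm on mixed unit pairs. Both arguments invoke exactly the same external Segre facts otherwise, so the two proofs are interchangeable; one small point to tighten in yours is that ``the Segre embedding is not totally geodesic'' alone does not give $h_S(e_j,\nu)\neq 0$ --- you should cite the mixed-pair norm identity there (as you in fact do later), which is what the paper does via $|B^{\mathbf j}(E_1,E_l)|=1$.
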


\begin{proof}
Let 
$$
\gamma :I\to\mathbb{C}P^{1}(4),
$$
where $\gamma$ is parametrized by arc-length, with non-zero constant  curvature $\kappa$, and we identify $\mathbb{C}P^{1}(4)$ with the sphere
$\mathbb{S}^{2}$ of curvature $4$.

Further, consider the following two embeddings
$$
\textbf{i}:M^{1+2q}=\gamma \times \mathbb{C}P^{q}(4)\to \mathbb{C}P^{1}(4)\times \mathbb{C}P^{q}(4)
$$
and 
$$
\textbf{j}:\mathbb{C}P^{1}(4)\times \mathbb{C}P^{q}(4)\to \mathbb{C}P^{1+2q}(4),
$$
where $\textbf{j}$ is the Segre embedding, and let $\phi=\textbf{j}\circ \textbf{i}$ be the composition map. We have 
$$
B^{\phi}(X,Y)= B^{\textbf{i}}(X,Y)+B^{\textbf{j}}(X,Y),
$$
for all $X,Y\in C(TM^{1+2q})$, and thus 
\begin{eqnarray*}
H^{\phi}=H^{\textbf{i}}+\frac{1}{1+2q}\sum_{l=1}^{1+2q}B^{\textbf{j}}(E_{l},E_{l}),
\end{eqnarray*}
where $E_{1}=\gamma^{'}$ and $\{E_{2},\ldots,E_{1+2q}\}$ is a local orthonormal frame field defined on $\mathbb{C}P^{q}(4)$ and tangent to $\mathbb{C}P^{q}(4)$.

Recall that \cite{Chen-Segre}, since $\textbf{j}$ is the Segre embedding, we have
$$
B^{\textbf{j}}(E_{l},E_{l})=0, \quad \forall l=1,\ldots ,1+2q.
$$
Therefore,
\begin{eqnarray*}
H^{\phi}=H^{\textbf{i}}.
\end{eqnarray*}

Now, we have 
\begin{eqnarray}\label{nabla H with A}
\nabla_{X}^{\mathbb{C}P^{1+2q}}H^{\phi}=\nabla_{X}^{\perp_{\phi}}H^{\phi}-A^{\phi}_{H^{\phi}}X,
\end{eqnarray}
and, on the other hand,
\begin{eqnarray}\label{nabla H}
\nabla_{X}^{\mathbb{C}P^{1+2q}}H^{\phi}&=& \nabla_{X}^{\mathbb{C}P^{1+2q}}H^{\textbf{i}}\nonumber
\\
&=&
\nabla_{X}^{\mathbb{C}P^{1}\times\mathbb{C}P^{q}}H^{\textbf{i}}+B^{\textbf{j}}(X,H^{\textbf{i}}).
\end{eqnarray}

In order to compute $H^{\textbf{i}}$, we consider $\{E_{1}=\gamma^{'}={\bf t},{\bf n}\}$ the Frenet frame field along 
$\gamma$ in $\mathbb{C}P^{1}(4)$. Since
\begin{eqnarray*}
B^{\textbf{i}}(E_{1},E_{1})
&=&
\nabla_{E_{1}}^{\mathbb{C}P^{1}\times \mathbb{C}P^{q}}E_{1}-\nabla^{M^{1+2q}}_{E_{1}}E_{1}
=
\nabla_{E_{1}}^{\mathbb{C}P^{1}}E_{1}
\\
&=&
\kappa \bf{n}
\end{eqnarray*} 
and 
\begin{eqnarray*}
B^{\textbf{i}}(E_{l},E_{l})
&=&
\nabla_{E_{l}}^{\mathbb{C}P^{1}\times \mathbb{C}P^{q}}E_{l}-\nabla^{M^{1+2q}}_{E_{l}}E_{l}
\\
&=&
0
\end{eqnarray*}
for $l=2,\ldots,1+2q$, we obtain
\begin{eqnarray}\label{Hi}
H^{\textbf{i}}=\frac{1}{1+2q}\kappa \bf{n}.
\end{eqnarray} 
Replacing (\ref{Hi}) in (\ref{nabla H}), we get
\begin{eqnarray}\label{nabla H-phi}
\nabla_{X}^{\mathbb{C}P^{1+2q}}H^{\phi}=\frac{\kappa}{1+2q}\{\nabla_{X}^{\mathbb{C}P^{1}\times\mathbb{C}P^{q}}{\bf n}+B^{\textbf{j}}(X,{\bf n})\}+\frac{X(\kappa)}{1+2q}\textbf{n}.
\end{eqnarray}
To find the shape operator $A^{\phi}_{H^{\phi}}$ and to prove that the immersion $\phi$ is not PMC, we consider first $X=E_{1}$ in Equation (\ref{nabla H-phi}), and we obtain
\begin{eqnarray}\label{nabla H-with X}
\nabla_{E_{1}}^{\mathbb{C}P^{1+2q}}H^{\phi}
&=&
\frac{\kappa}{1+2q}\{\nabla_{E_{1}}^{\mathbb{C}P^{1}}{\bf n}+B^{\textbf{j}}(E_{1},{\bf n})\}+\frac{\kappa ^{'}}{1+2q}\textbf{n}\nonumber
\\
&=&
\frac{\kappa}{1+2q}\{-\kappa {\bf t}\}+ \frac{\kappa ^{'}}{1+2q}\textbf{n}
\nonumber
\\
&=&
-\frac{\kappa^{2}}{1+2q}{\bf t}+\frac{\kappa ^{'}}{1+2q}\textbf{n}
\nonumber
\\
&=&
-\frac{\kappa^{2}}{1+2q}E_{1}+\frac{\kappa ^{'}}{1+2q}\textbf{n}.
\end{eqnarray}
From Equations (\ref{nabla H with A}) and (\ref{nabla H-with X}), we conclude
\begin{eqnarray}\label{A and nabla H1}
\nabla_{E_{1}}^{\perp_{\phi}}H^{\phi}=\frac{\kappa ^{'}}{1+2q}\textbf{n} \quad \textnormal{and} \quad A^{\phi}_{H^{\phi}}E_{1}=\frac{\kappa^{2}}{1+2q}E_{1}.
\end{eqnarray}
Second, if $X=E_{l}$, $l=2,\ldots,2q+1$, from Equation (\ref{nabla H-phi}), we get 
\begin{eqnarray}\label{nabla H-with B}
\nabla_{E_{l}}^{\mathbb{C}P^{1+2q}}H^{\phi}
&=&
\frac{\kappa}{1+2q}\{\nabla_{E_{l}}^{\mathbb{C}P^{1}\times\mathbb{C}P^{q}}{\bf n}+B^{\textbf{j}}(E_{l},{\bf n})\}\nonumber
\\
&=&
\frac{\kappa}{1+2q}B^{\textbf{j}}(E_{l},{\bf n}).
\end{eqnarray}
Using Equations (\ref{nabla H with A}) and (\ref{nabla H-with B}), we conclude
\begin{eqnarray}\label{nablaXl}
\nabla_{E_{l}}^{\perp_{\phi}}H^{\phi}=\frac{\kappa}{1+2q}B^{\textbf{j}}(E_{l},{\bf n}) \quad \textnormal{and} \quad A^{\phi}_{H^{\phi}}E_{l}=0.
\end{eqnarray}
Since $\vert B^{\textbf{j}}(E_{1},E_{l})\vert=1$, for all $l=2,\ldots, 1+2q$, $B^{\textbf{j}}(E_{l},{\bf n})\neq 0$.
Therefore, 
$\nabla_{E_{l}}^{\perp_{\phi}}H^{\phi}\neq 0$, i.e. $M^{1+2q}$ is a non-PMC submanifold of
$\mathbb{C}P^{1+2q}(4)$. 

In the following, in order to study the biconservativity of 
$\phi$, we compute the curvature term in the biconservative equation. We have  
\begin{eqnarray*}
R^{\mathbb{C}P^{1+2q}}(X,H^{\phi})X
&=&
\langle H^{\phi},X\rangle X-\langle X,X\rangle H^{\phi}+\langle JH^{\phi},X\rangle JX
\\
&\ & -\langle JX,X\rangle JH^{\phi}+2\langle JH^{\phi},X\rangle JX 
\\
&=&-H^{\phi}+3\langle JH^{\phi},X\rangle JX,
\end{eqnarray*}
for all $X\in C(TM^{1+2q})$.
Then 
\begin{eqnarray*}
\trace (R^{\mathbb{C}P^{1+2q}}(\cdot,H^{\phi})\cdot)^{\top ^{\phi}}
&=&
3\{\trace \langle T,\cdot\rangle J\cdot\}^{\top^{\phi}}
\\
&=&
3(JT)^{\top^{\phi}},
\end{eqnarray*}
where $JH^{\phi}=T+N$ with respect to $\phi$.

In our case,
$$
JH^{\phi}=JH^{\textbf{i}}=\frac{\kappa}{1+2q}(\pm\textbf{t})\in C(TM^{1+2q}),
$$
so $T=JH^{\phi}$ and then $JT=-H^{\phi}$ which implies
$(JT)^{\top^{\phi}}=0$. Therefore,
$$
\trace (R^{\mathbb{C}P^{1+2q}}(\cdot,H^{\phi})\cdot)^{\top ^{\phi}}=0.
$$

Now, according to Proposition \ref{Biconservativity Cdts}, in order to show that $\phi$ is biconservative, we must prove 
\begin{eqnarray}\label{Biconservative-CPn}
4\trace A^{\phi}_{\nabla_{(\cdot)}^{\perp_{\phi}}H^{\phi}}(\cdot)+(1+2q)\grad (\vert H^{\phi}\vert^{2})=0.
\end{eqnarray}
The second term of the left hand side of \eqref{Biconservative-CPn} can be written as 
\begin{eqnarray}\label{Grad H}
(1+2q)\grad (\vert H^{\phi}\vert^{2})&=& \frac{2\kappa \kappa ^{'}}{1+2q}E_{1}.
\end{eqnarray}
For the first term, by (\ref{A and nabla H1}) we have
\begin{eqnarray}\label{A the shape operator of M}
A^{\phi}_{\nabla_{E_{1}}^{\perp_{\phi}}H^{\phi}}E_{1}&=& A^{\phi}_{\frac{\kappa^{'}}{1+2q}\textbf{n}}E_{1}
\nonumber
=
\frac{\kappa ^{'}}{\kappa}A^{\phi}_{H^{\phi}}E_{1}
\nonumber
\\
&=&
\frac{\kappa \kappa ^{'}}{1+2q}E_{1}.
\end{eqnarray}
Next, if $l=2,\ldots,1+2q$ and $k=1,\ldots,1+2q$, we have
\begin{eqnarray*}
\langle A^{\phi}_{\nabla_{E_{l}}^{\perp_{\phi}}H^{\phi}}E_{l},E_{k}\rangle 
&=&
\frac{\kappa}{1+2q}\langle A^{\phi}_{B^{\textbf{j}}(E_{l},\textbf{n})}E_{l},E_{k}\rangle
\\
&=&
\frac {\kappa}{1+2q}\langle B^{\phi}(E_{l},E_{k}),
B^{\textbf{j}}(E_{l},\textbf{n})
\rangle
\\
&=&
\frac {\kappa}{1+2q}\langle B^{\textbf{i}}(E_{l},E_{k})+B^{\textbf{j}}(E_{l},E_{k}),
B^{\textbf{j}}(E_{l},\textbf{n})
\rangle
\\
&=&
\frac {\kappa}{1+2q}\{\langle B^{\textbf{i}}(E_{l},E_{k}),
B^{\textbf{j}}(E_{l},\textbf{n})
\rangle +\langle B^{\textbf{j}}(E_{l},E_{k}),
B^{\textbf{j}}(E_{l},\textbf{n})
\rangle \}
\\
&=&
\frac {\kappa}{1+2q}\langle B^{\textbf{j}}(E_{l},E_{k}),
B^{\textbf{j}}(E_{l},\textbf{n})
\rangle.
\end{eqnarray*}
Further, from the Gauss Equation \eqref{the Gauss Equation} for the immersion \textbf{j} we have  
\begin{eqnarray*}
\langle B^{\textbf{j}}(E_{l},E_{k}),
B^{\textbf{j}}(E_{l},\textbf{n})
\rangle &=& \langle B^{\textbf{j}}(\textbf{n},E_{k}), B^{\textbf{j}}(E_{l},E_{l})
\rangle - \langle R^{\mathbb{C}P^{1+2q}}(E_{l},\textbf{n})E_{l},E_{k}\rangle 
\\
&\ &
+ \langle R^{\mathbb{C}P^{1}\times \mathbb{C}P^{q}}(E_{l},\textbf{n})E_{l},E_{k}\rangle.
\end{eqnarray*}
We have
$$
B^{\textbf{j}}(E_{l},E_{l})=0.
$$
Since
\begin{eqnarray*}
R^{\mathbb{C}P^{1+2q}}(E_{l},\textbf{n})E_{l}
&=&
\langle \textbf{n},E_{l}\rangle E_{l}-\langle E_{l},E_{l}\rangle \textbf{n}+\langle J\textbf{n},E_{l}\rangle JE_{l}
\\
&\ & -\langle JE_{l},E_{l}\rangle J\textbf{n}+2\langle J\textbf{n},E_{l}\rangle JE_{l} 
\\
&=&
-\textbf{n},
\end{eqnarray*}
it follows
$$
\langle R^{\mathbb{C}P^{1+2q}}(E_{l},\textbf{n})E_{l},E_{k}\rangle =0.
$$
Also, we have
\begin{eqnarray*}
 R^{\mathbb{C}P^{1}\times \mathbb{C}P^{q}}(E_{l},\textbf{n})E_{l} &=& \nabla_{E_{l}}^{\mathbb{C}P^{1}\times \mathbb{C}P^{q}}\nabla_{\textbf{n}}^{\mathbb{C}P^{1}\times \mathbb{C}P^{q}} E_{l}-\nabla_{\textbf{n}}^{\mathbb{C}P^{1}\times \mathbb{C}P^{q}}\nabla_{E_{l}} ^{\mathbb{C}P^{1}\times \mathbb{C}P^{q}}E_{l}
\\
&\ &
-\nabla_{[E_{l},\textbf{n}]}^{\mathbb{C}P^{1}\times \mathbb{C}P^{q}} E_{l}
\\
&=&
0,
\end{eqnarray*}
and therefore, 
\begin{eqnarray}\label{product of 2nd}
\langle B^{\textbf{j}}(E_{l},E_{k}),
B^{\textbf{j}}(E_{l},\textbf{n})
\rangle=0.
\end{eqnarray}
Thus $\langle A^{\phi}_{\nabla_{E_{l}}^{\perp_{\phi}}H^{\phi}}E_{l},E_{k}\rangle=0$, i.e. 
\begin{eqnarray} \label{The shape Operator}
 A^{\phi}_{\nabla_{E_{l}}^{\perp_{\phi}}H^{\phi}}E_{l} =0.
\end{eqnarray}
Replacing \eqref{Grad H}, \eqref{A the shape operator of M} and \eqref{The shape Operator} in \eqref{Biconservative-CPn} we obtain
\begin{eqnarray*}
4\trace A^{\phi}_{\nabla_{(\cdot)}^{\perp_{\phi}}H^{\phi}}(\cdot)+(1+2q)\grad (\vert H^{\phi}\vert^{2})&=& \frac{4\kappa \kappa ^{'}}{1+2q}E_{1}+\frac{2\kappa \kappa^{'}}{1+2q}E_{1}
\\
&=&
\frac{6 \kappa \kappa ^{'}}{1+2q}E_{1}.
\end{eqnarray*}
In conclusion, 
$\phi$ is biconservative if and only if $\kappa ^{'}=0$, thus $\kappa=constant$.

Now, we are going to prove that $M^{1+2q}$ is a  proper-biharmonic submanifold of 
$\mathbb{C}P^{1+2q}(4)$ if and only if $\kappa^{2}=4$. 

Since $\kappa$ is constant, $M^{1+2q}$ is biconservative and thus the tangential part of the biharmonic equation \eqref{tau-2} holds. Therefore, we need to solve the normal part of the biharmonic equation. As $JH^{\phi}$ is tangent to $M^{1+2q}$, the normal part of the biharmonic equation is \begin{eqnarray}\label{Biharmonic-normal}
-\Delta ^{\perp_{\phi}}H^{\phi}-
\trace B^{\phi}(\cdot, A^{\phi}_{H^{\phi}}(\cdot))+(m+3)H^{\phi}=0
\end{eqnarray} 
(see also \cite{F-L-M-O}).

For the first term of Equation \eqref{Biharmonic-normal}, we have
\begin{eqnarray}\label{normal-Laplacian}
-\Delta ^{\perp_{\phi}}H^{\phi}&=& \nabla_{E_{1}}^{\perp_{\phi}}\nabla_{E_{1}}^{\perp_{\phi}}H^{\phi}-\nabla_{\nabla_{E_{1}}^{M}E_{1}}^{\perp_{\phi}}H^{\phi}
+\sum_{l=2}^{1+2q}\{ \nabla_{E_{l}}^{\perp_{\phi}}\nabla_{E_{l}}^{\perp_{\phi}}H^{\phi}-\nabla_{\nabla_{E_{l}}^{M}E_{l}}^{\perp_{\phi}}H^{\phi} \}\nonumber
\\
&=&
\sum_{l=2}^{1+2q}\{ \nabla_{E_{l}}^{\perp_{\phi}}\nabla_{E_{l}}^{\perp_{\phi}}H^{\phi}-\nabla_{\nabla_{E_{l}}^{\mathbb{C}P^{q}}E_{l}}^{\perp_{\phi}}H^{\phi} \}\nonumber
\\
&=&
\frac{\kappa}{1+2q}\sum_{l=2}^{1+2q}\{ \nabla_{E_{l}}^{\perp_{\phi}}\nabla_{E_{l}}^{\perp_{\phi}}\textbf{n}-\nabla_{\nabla_{E_{l}}^{\mathbb{C}P^{q}}E_{l}}^{\perp_{\phi}}\textbf{n} \}.
\end{eqnarray}
Using (\ref{nablaXl}), we obtain
$$
\nabla_{E_{l}}^{\perp_{\phi}}\nabla_{E_{l}}^{\perp_{\phi}}\textbf{n}=\nabla_{E_{l}}^{\perp_{\phi}}B^{\textbf{j}}(E_{l},\textbf{n}).
$$
Next,
\begin{eqnarray}\label{BjnablaBj}
\nabla_{E_{l}}^{\mathbb{C}P^{1+2q}}B^{\textbf{j}}(E_{l},\textbf{n})=\nabla_{E_{l}}^{\perp_{\phi}}B^{\textbf{j}}(E_{l},\textbf{n})-A^{\phi}_{B^{\textbf{j}}(E_{l},\textbf{n})}E_{l}.
\end{eqnarray}
On the other hand,
\begin{eqnarray}\label{nabla Bj}
\nabla_{E_{l}}^{\mathbb{C}P^{1+2q}}B^{\textbf{j}}(E_{l},\textbf{n})=\nabla_{E_{l}}^{\perp_{\textbf{j}}}B^{\textbf{j}}(E_{l},\textbf{n})-A^{\textbf{j}}_{B^{\textbf{j}}(E_{l},\textbf{n})}E_{l}.
\end{eqnarray}
As 
$\nabla^{\perp_{\textbf{j}}}B^{\textbf{j}}=0$, Equation (\ref{nabla Bj}) becomes 
\begin{eqnarray}\label{nablaBjj}
\nabla_{E_{l}}^{\mathbb{C}P^{1+2q}}B^{\textbf{j}}(E_{l},\textbf{n})
&=& B^{\textbf{j}}(\nabla^{\mathbb{C}P^{q}}_{E_{l}}E_{l},\textbf{n})+B^{\textbf{j}}(E_{l},\nabla^{\mathbb{C}P^{1}\times \mathbb{C}P^{q}}_{E_{l}}\textbf{n})\nonumber
\\
&\ &
-A^{\textbf{j}}_{B^{\textbf{j}}(E_{l},\textbf{n})}E_{l}\nonumber
\\
&=&
B^{\textbf{j}}(\nabla^{\mathbb{C}P^{q}}_{E_{l}}E_{l},\textbf{n})
-A^{\textbf{j}}_{B^{\textbf{j}}(E_{l},\textbf{n})}E_{l}.
\end{eqnarray}
Thus, from Equations (\ref{BjnablaBj}) and  (\ref{nablaBjj}), and using \eqref{product of 2nd}, we obtain 
\begin{eqnarray*}
\nabla_{E_{l}}^{\perp_{\phi}}B^{\textbf{j}}(E_{l},\textbf{n})=B^{\textbf{j}}(\nabla^{\mathbb{C}P^{q}}_{E_{l}}E_{l},\textbf{n})
-\langle A^{\textbf{j}}_{B^{\textbf{j}}(E_{l},\textbf{n})}E_{l},\textbf{n}\rangle \textbf{n}.
\end{eqnarray*}
Hence,
\begin{eqnarray}\label{lap H}
\nabla_{E_{l}}^{\perp_{\phi}}\nabla_{E_{l}}^{\perp_{\phi}}\textbf{n}&=&\nabla_{E_{l}}^{\perp_{\phi}}B^{\textbf{j}}(E_{l},\textbf{n})\nonumber
\\
&=&
B^{\textbf{j}}(\nabla^{\mathbb{C}P^{q}}_{E_{l}}E_{l},\textbf{n})
-\langle B^{\textbf{j}}(E_{l},\textbf{n}), B^{\textbf{j}}(E_{l},\textbf{n})\rangle \textbf{n}\nonumber
\\
&=&
B^{\textbf{j}}(\nabla^{\mathbb{C}P^{q}}_{E_{l}}E_{l},\textbf{n})
-\textbf{n}.
\end{eqnarray}
Replacing \eqref{lap H} in \eqref{normal-Laplacian}, we get
\begin{eqnarray}\label{normal-Laplacian-Calculation}
-\Delta ^{\perp_{\phi}}H^{\phi}&=&
\frac{\kappa}{1+2q}\sum_{l=2}^{1+2q}\{ \nabla_{E_{l}}^{\perp_{\phi}}\nabla_{E_{l}}^{\perp_{\phi}}\textbf{n}-\nabla_{\nabla_{E_{l}}^{\mathbb{C}P^{q}}E_{l}}^{\perp_{\phi}}\textbf{n} \}\nonumber
\\
&=&
\frac{\kappa}{1+2q}\sum_{l=2}^{1+2q}\{B^{\textbf{j}}(\nabla^{\mathbb{C}P^{q}}_{E_{l}}E_{l},\textbf{n})
-\textbf{n}-B^{\textbf{j}}(\nabla^{\mathbb{C}P^{q}}_{E_{l}}E_{l},\textbf{n})\}\nonumber
\\
&=&
\frac{\kappa}{1+2q}\sum_{l=2}^{1+2q}\{-\textbf{n}\}\nonumber
\\
&=&
\frac{-2q\kappa}{1+2q}\textbf{n}.
\end{eqnarray}

Now, we compute 
$\trace B^{\phi}(\cdot, A^{\phi}_{H^{\phi}}(\cdot))$.
From Equations (\ref{A and nabla H1}) and (\ref{nablaXl}) we recall that 
$$
A^{\phi}_{H^{\phi}}E_{1}=\frac{\kappa^{2}}{1+2q}E_{1} \quad \textnormal{and} \quad A^{\phi}_{H^{\phi}}E_{l}=0.
$$
It follows that
\begin{eqnarray}\label{traceBphi}
\trace B^{\phi}(\cdot, A^{\phi}_{H^{\phi}}(\cdot))
&=&
B^{\phi}(E_{1}, A^{\phi}_{H^{\phi}}E_{1})
=
\frac{\kappa^{2}}{1+2q}B^{\phi}(E_{1},E_{1})\nonumber
\\
&=&
\frac{\kappa^{2}}{1+2q}
\{B^{\textbf{i}}(E_{1},E_{1})+B^{\textbf{j}}(E_{1},E_{1})\}\nonumber
\\
&=&
\frac{\kappa^{3}}{1+2q}\textbf{n},
\end{eqnarray}

From (\ref{Biharmonic-normal}), \eqref{normal-Laplacian-Calculation} and (\ref{traceBphi}) we obtain that $M^{1+2q}$ is biharmonic if and only if 
$$
-\frac{2q\kappa}{1+2q}\textbf{n}-\frac{\kappa^{3}}{1+2q}\textbf{n}+\frac{(m+3)\kappa}{1+2q}\textbf{n}=0.
$$
Thus as $\kappa \neq 0$,  we get $\kappa^{2}=4$.

Using the isometry of 
$\mathbb{C}P^{1}(4)$ with the sphere 
$\mathbb{S}^{2}$ of radius $1/2$ and by a standard argument,  we get that a curve $\gamma$ with constant curvature $\kappa=2$ is  a small circle of radius $(1/2)/\sqrt{2}$ of the above sphere $\mathbb{S}^{2}$. Thus, it is proper-biharmonic in 
$\mathbb{C}P^{1}(4)$ (see \cite{C-M-Oniciuc, Caddeo-Montaldo-Piu}).
\end{proof}

\end{document}